\numberwithin{equation}{section}
\begin{document}
\title[Cyclotomic expansion and volume conjecture for superpolynomials]{%
Cyclotomic expansion and volume conjecture for superpolynomials of colored HOMFLY-PT homology and colored Kauffman homology}
\author[Qingtao Chen]{Qingtao Chen}
\address{Department of Mathematics \\
ETH Zurich \\
8092 Zurich \\
Switzerland }
\email{qingtao.chen@math.ethz.ch}

\begin{abstract}
We first study superpolynomial associated to triply-graded reduced colored HOMFLY-PT homology. We propose conjectures of congruent relations and cyclotomic expansion for it. We prove conjecture of $N=1$ for torus knot case, through which we obtain the corresponding invariant $\alpha(T(m,n))=-(m-1)(n-1)/2$. This is closely related to the Milnor conjecture. Many examples including homologically thick knots and higher representations are also tested. Based on these examples, we further propose a conjecture that invariant $\alpha$ determined in cyclotomic expansion at $N=1$ is a lower bound for smooth 4-ball genus. According to the structure of cyclotomic expansion, we propose a volume conjecture for $SU(n)$ specialized superpolynomial associated to reduced colored HOMFLY homology. We also prove the figure eight case for this new volume conjecture.

Then we study superpolynomial associated to triply-graded reduced colored Kauffman homology. We propose a conjecture of cyclotomic expansion for it. Homologically thick examples and higher representations are tested. Finally we apply the same idea to the Heegaard-Floer knot homology and also obtain an expansion formula for all the examples we tested.
\end{abstract}

\maketitle

\theoremstyle{plain} \newtheorem{thm}{Theorem}[section] \newtheorem{theorem}[%
thm]{Theorem} \newtheorem{lemma}[thm]{Lemma} \newtheorem{corollary}[thm]{%
Corollary} \newtheorem{proposition}[thm]{Proposition} \newtheorem{conjecture}%
[thm]{Conjecture} \theoremstyle{definition} \newtheorem{remark}[thm]{Remark}
\newtheorem{remarks}[thm]{Remarks} \newtheorem{definition}[thm]{Definition}
\newtheorem{example}[thm]{Example}






\section{Introduction}

For the past 30 years, we witnessed many exciting developments in the area
of knot theory which has also been connected to many active areas in
mathematics and physics. Quantum invariants of knots and 3-manifolds was
pioneered by E. Witten's seminal paper \cite{Wit1} and was rigorously
defined by Reshetikhin-Turaev in \cite{RT}. About 15 years ago, M. Khovanov
\cite{Kho1} introduce the idea of categorification by illustrating an
example of categorification of the classical Jones polynomial. The reduced
Poincare polynomial of Khovanov's homology $\mathcal{P}(\mathcal{K};q,t)$
recovers the classical Jones polynomial $J(\mathcal{K};q)$ in the following
meaning
\begin{equation}
\mathcal{P}(\mathcal{K};q,-1)=J(\mathcal{K};q).
\end{equation}

He also showed that $\mathcal{P}(5_{1};q,-1)\neq \mathcal{P}(10_{132};q,-1)$
for knots $5_{1}$ and $10_{132}$, while they share the same Jones
polynomial, i.e. $J(5_{1};q)=J(10_{132};q)$. Then Khovanov-Rozansky \cite%
{KR1} generalize the categorification of Jones polynomial to the
categorification of the $sl(n)$ invariants, whose corresponding Poincare
polynomial $\mathcal{P}^{sl(n)}(\mathcal{K};q,t)$ recovers classical
HOMFLY-PT polynomial $P(\mathcal{K};a,q)$ with specialization $a=q^{n}$,
i.e. $\mathcal{P}^{sl(n)}(\mathcal{K};q,-1)=P(\mathcal{K};q^{n},q)$.

The idea of superpolynomial $\mathcal{P}(\mathcal{K};a,q,t)$ was introduced
in \cite{DGR} by Dunfield, Gukov and Rasmussen, which is a kind of
categorification and could recover both the classical HOMFLY-PT polynomial
and Alexander polynomial respectively, i.e. $\mathcal{P}(\mathcal{K}%
;q^{n},q,-1)=P(\mathcal{K};q^{n},q)$ and $\mathcal{P}(\mathcal{K}%
;-1,q,-1)=\Delta _{\mathcal{K}}(q^{2})$, where $\Delta _{\mathcal{K}}(q)$ is
the Alexander polynomial in the normal sense. This was further studied by
Khovanov-Rozansky in \cite{KR2}. It is a bit tricky that two theories
doesn't match directly.%
\begin{equation}
\mathcal{P}(\mathcal{K};q^{n},q,t)\neq \mathcal{P}^{sl(n)}(\mathcal{K};q,t),
\end{equation}

However, Dunfield, Gukov and Rasmussen argued \cite{DGR} superpolynomial $%
\mathcal{P}(\mathcal{K};a,q,t)$ could recover $\mathcal{P}^{sl(n)}(\mathcal{K%
};q,t)$ after certain differential $d_{n}$ involved. They further argued
\cite{DGR} that the specialized superpolynomial $\mathcal{P}(\mathcal{K}%
;t^{-1},q,t)$ could also recover the Poincare polynomial $HFK$($\mathcal{K}%
;q^{2},t$) of Heegaard-Floer knot homology $\widehat{HFK}_{i}$($\mathcal{K};s
$) under certain differential $d_{0}$. The Poincare polynomial $HFK$($%
\mathcal{K};q,t$) was given by%
\begin{equation}
HFK(\mathcal{K};q,t)\triangleq \underset{s,i\in
\mathbb{Z}
}{\sum }t^{i}q^{s}\widehat{HFK}_{i}(\mathcal{K};s),
\end{equation}

with condition

\begin{equation}
HFK(\mathcal{K};q,-1)=\Delta _{\mathcal{K}}(q).
\end{equation}

The Heegaard-Floer theory was independently constructed by Ozsv\'{a}th-Szab%
\'{o}\cite{OS2} and Rasmussen\cite{Ras}, which is another very active and
profound area.

\bigskip

Many people are interested in categorification of various invariants ranging
from classical invariants such HOMFLY-PT and Kauffman polynomials to their
colored version (with representation involved). Of course the theory of
superpolynomial become a very active area which attracts many mathematician
and physicists. More mathematical rigorous formulation of categorification
can be found in \cite{Web, Wu}

Congruent relations and cyclotomic expansion for colored $SU(n)$ invariants
was studied in papers \cite{CLPZ, CLZ} by joint works of the author with K.
Liu, P. Peng and S. Zhu. We get to know that congruent relations for quantum
invariants could imply certain cyclotomic expansion for these quantum
invariants

Our motivation of this paper is to have a correct point of view to study
congruent relations among these superpolynomials first.

There is a well-known result that Heegaard-Floer homology of an alternative
knot can be determined by a very simple method with only Alexander
polynomials and signature involved This result was proved by Ozsv\'{a}th-Szab%
\'{o} \cite{OS1}.

\begin{theorem}[Ozsv\'{a}th-Szab\'{o}]
Let $\mathcal{K}\subset S^{3}$ be an alternating knot with Alexander-Conway
polynomial $\Delta _{\mathcal{K}}(q)=\underset{s\in
\mathbb{Z}
}{\sum }a_{s}q^{s}$ and signature $\sigma =\sigma (\mathcal{K})$. Then we have%
\begin{equation}
\widehat{HFK}_{i}(\mathcal{K},s)=\left\{
\begin{array}{c}
\mathbb{Z}
^{|a_{s}|} \\
0%
\end{array}%
\right.
\begin{array}{c}
\text{if }i=s+\frac{\sigma }{2} \\
\text{otherwise}%
\end{array}%
\end{equation}
\end{theorem}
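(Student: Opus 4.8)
The plan is to reduce the statement to two claims: first, that the knot Floer homology of an alternating knot is \emph{thin}, meaning it is supported entirely on the single diagonal $i-s=\delta$ with respect to the Maslov grading $i$ and the Alexander grading $s$; and second, that this diagonal is $\delta=\sigma/2$. Granting thinness, the rank formula is essentially forced by the Euler characteristic. By the defining property of $\widehat{HFK}$ (the $t=-1$ specialization recorded in the introduction), its graded Euler characteristic is the Alexander polynomial,
\begin{equation}
\sum_{s,i}(-1)^{i}\,\mathrm{rk}\,\widehat{HFK}_{i}(\mathcal{K},s)\,q^{s}=\Delta_{\mathcal{K}}(q)=\sum_{s}a_{s}q^{s}.
\end{equation}
If for each fixed $s$ the group $\widehat{HFK}_{i}(\mathcal{K},s)$ is nonzero only for the single value $i=s+\delta$, then no cancellation between different homological gradings is possible, so $(-1)^{s+\delta}\,\mathrm{rk}\,\widehat{HFK}_{s+\delta}(\mathcal{K},s)=a_{s}$. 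This gives $\mathrm{rk}\,\widehat{HFK}_{s+\delta}(\mathcal{K},s)=|a_{s}|$, and since there is no torsion in the thin case one recovers $\widehat{HFK}_{i}(\mathcal{K},s)=\mathbb{Z}^{|a_{s}|}$ exactly on the diagonal. Note also that the sign constraint is automatically consistent: the coefficients $a_{s}$ of the Alexander polynomial of an alternating knot alternate in sign, which is precisely what a single uniform $\delta$ requires.

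To prove thinness I would combine a rank count with the skein exact sequence for knot Floer homology. Since the Euler characteristic in each Alexander grading is $a_{s}$, one always has $\mathrm{rk}\,\widehat{HFK}(\mathcal{K})\ge\sum_{s}|a_{s}|$, with equality exactly when $\mathcal{K}$ is thin. For an alternating knot the sign alternation gives $\sum_{s}|a_{s}|=|\Delta_{\mathcal{K}}(-1)|=\det(\mathcal{K})$, so it suffices to prove the reverse inequality $\mathrm{rk}\,\widehat{HFK}(\mathcal{K})\le\det(\mathcal{K})$. I would establish this by induction on the number of crossings of a reduced alternating diagram: resolving a suitably chosen crossing places $\mathcal{K}$ in a skein triangle together with two links of strictly smaller complexity that remain alternating, the determinant is additive along this triangle, and the long exact sequence makes the total rank subadditive, while the unknot furnishes the base case. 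Matching the inequality against its reverse forces equality, hence thinness.

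The delicate point, and the one I expect to be the main obstacle, is the bookkeeping of both gradings through the exact sequence and, above all, the identification of the diagonal with $\sigma/2$. One must simultaneously track the Maslov and Alexander shifts contributed by the resolution and crossing-change maps, and reconcile them with the change in the signature $\sigma$, which is subtle because the skein triangle mixes links of different numbers of components and orientations. The signature of a reduced alternating diagram is computable from the Goeritz form and the checkerboard coloring, and its behavior under resolving a crossing is controlled; establishing the exact transformation law and matching it to the grading shift in the exact sequence is what pins down the additive constant $\delta=\sigma/2$, rather than merely $\delta\equiv\sigma/2\pmod{2}$. In practice one can anchor the constant by verifying it on a generating family of alternating knots, such as the $(2,n)$ torus knots and twist knots, and then propagate it through the induction.

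As a consistency check, and an alternative route to the upper rank bound, I would invoke the fact that the double branched cover $\Sigma(\mathcal{K})$ of an alternating knot is an L-space, so that $\mathrm{rk}\,\widehat{HF}(\Sigma(\mathcal{K}))=|H_{1}(\Sigma(\mathcal{K}))|=\det(\mathcal{K})$, together with the spectral sequence relating the relevant homological data of $\mathcal{K}$ to $\widehat{HF}(\Sigma(\mathcal{K}))$. The L-space condition forces this spectral sequence to degenerate in a manner that again confines the homology to a single diagonal. This circle of ideas is exactly what later permits the extension from alternating to quasi-alternating knots.
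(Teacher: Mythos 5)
First, note that the paper does not prove this statement at all: it is quoted as a theorem of Ozsv\'{a}th--Szab\'{o} with a citation to \cite{OS1}, so there is no in-paper argument to compare against, and your sketch must be measured against the literature. The strategy you describe --- thinness plus the Euler characteristic forcing the rank formula, with thinness obtained from the inequality $\mathrm{rk}\,\widehat{HFK}(\mathcal{K})\le\det(\mathcal{K})$ via a skein triangle along which the determinant is additive --- is a legitimate route, but it is essentially the later Manolescu--Ozsv\'{a}th quasi-alternating argument rather than the original proof in \cite{OS1}. That proof instead builds a Heegaard diagram from the alternating projection whose generators are the Kauffman states, establishes the combinatorial identity $M(x)-A(x)=\sigma/2$ for every state $x$ by a local analysis at the crossings, and concludes that the differential vanishes because each Alexander grading is concentrated in a single Maslov grading; freeness and the rank $|a_s|$ then come for free.

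The genuine gap is the step you yourself flag: pinning down the diagonal constant $\delta=\sigma/2$. ``Anchor the constant on a generating family and propagate it through the induction'' is not a proof; beyond thinness, the entire content of the theorem is the exact transformation law relating the Maslov and Alexander shifts in the skein triangle to the change of signature under resolving a crossing, and without writing that law down --- including the case where the two resolutions have different numbers of components, which forces tensoring with a rank-two bigraded group and raises orientation and convention issues for the signature of links --- you have at best a congruence for $\delta$, not the equality. Two smaller points: the claim that thinness excludes torsion over $\mathbb{Z}$ does not follow from a rank count over a field (in the Kauffman-state proof freeness is automatic because the differential vanishes identically, but your approach needs a universal-coefficients argument or an integral version of the exact triangle); and the identity $\sum_s|a_s|=\det(\mathcal{K})$ rests on the sign-alternation of the Alexander coefficients of an alternating knot, a nontrivial classical input that must be cited independently rather than inferred from the conclusion you are trying to prove.
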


It was shown by C. Manolescu and P.S. Ozsv\'{a}th \cite{MO} that
quasi-alternating knots hold the same results.

Because one side of the triply-graded superpolynomials is also connected to
Heeggard-Floer knot homology under certain differential $d_{0}$. Thus it is
natural to propose a conjecture that under some $t$-grading shifting, we
could also obtain nice congruent relation properties just like the
non-categorified colored $SU(n)$ invariants\cite{CLPZ}. We first studied the
congruent relation properties for torus knots $T(2,2p+1)$, whose closed
formulas was obtained by H. Fuji, S. Gukov and P. Sulkowski in \cite{FGS1}.
After we did an intensive computation, we propose the following conjecture

\begin{conjecture}
The superpolynomial of triply-graded reduced colored HOMFLY-PT homology has the following
congruent relations%
\begin{eqnarray}
(-t)^{-Np}\mathcal{P}_{N}(T(2,2p+1);a,q,t) &\equiv &(-t)^{-kp}\mathcal{P}%
_{k}(T(2,2p+1);a,q,t)  \notag \\
&&mod(aq^{-1}+t^{-1}a^{-1}q)(t^{2}aq^{N+k}+t^{-1}a^{-1}q^{-N-k}),
\end{eqnarray}

where $\mathcal{P}_{N}(\mathcal{K};a,q,t)$ denote the superpolynomial of
triply-graded reduced colored HOMFLY-PT homology of a knot $\mathcal{K}$
with $N$-th symmetric power of the fundamental representation.
\end{conjecture}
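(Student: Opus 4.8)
The plan is to prove the congruence directly from the explicit closed formula for $\mathcal{P}_{N}(T(2,2p+1);a,q,t)$ obtained by Fuji--Gukov--Sulkowski in \cite{FGS1}, which writes the normalized superpolynomial as a $p$-fold nested $q$-hypergeometric sum over indices $N\geq k_{1}\geq\cdots\geq k_{p}\geq 0$, with summand a monomial in $a,q,t$ times a product of $q$-binomial coefficients and $q$-Pochhammer factors. Set $D_{N}=(-t)^{-Np}\mathcal{P}_{N}(T(2,2p+1);a,q,t)$ and write the two factors of the modulus as $M=aq^{-1}+t^{-1}a^{-1}q$ and $L_{N,k}=t^{2}aq^{N+k}+t^{-1}a^{-1}q^{-N-k}$. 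Working in the Laurent polynomial ring $\mathbb{Z}[a^{\pm1},q^{\pm1},t^{\pm1}]$, the first step is to clear denominators: up to units, $M$ is associate to the irreducible polynomial $a^{2}t+q^{2}$ and $L_{N,k}$ to $a^{2}t^{3}q^{2(N+k)}+1$. Substituting the root $a^{2}=-q^{2}t^{-1}$ of the first into the second gives $1-t^{2}q^{2(N+k+1)}\neq 0$, so $M$ and $L_{N,k}$ are coprime. Since we are in a unique factorization domain and both factors are irreducible, $M L_{N,k}$ divides $D_{N}-D_{k}$ if and only if $M$ and $L_{N,k}$ each divide it; moreover these superpolynomials involve $a$ only through $a^{2}$ up to an overall monomial, so each divisibility reduces to a vanishing after the corresponding substitution for $a^{2}$. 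Thus it suffices to prove the two one-factor congruences separately.

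For the factor $M$, I would identify its zero locus $a^{2}=-q^{2}t^{-1}$ with the $sl(1)$-type specialization: at $t=-1$ it becomes $a=q$, the point at which the reduced colored invariant trivializes, and at general $t$ it is the specialization governed by the differential $d_{1}$. On this locus I expect the nested binomial sum of \cite{FGS1} to collapse, so that $\mathcal{P}_{N}$ reduces to $(-t)^{Np}$ times a quantity independent of the color $N$. This is precisely the categorified analogue of the $N=1$ statement established elsewhere in this paper for torus knots. Granting the collapse, $D_{N}=(-t)^{-Np}\mathcal{P}_{N}$ is $N$-independent on the $M$-locus, hence $D_{N}-D_{k}$ vanishes there and is divisible by $M$.

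For the factor $L_{N,k}$, whose locus $a^{2}=-t^{-3}q^{-2(N+k)}$ depends on the sum $N+k$, the argument is more delicate: one must substitute this specialization into the FGS sums for $D_{N}$ and for $D_{k}$ and show that the two become equal. The natural mechanism is a reflection symmetry of the $q$-binomial coefficients under $q^{N}\mapsto q^{-N}$ combined with the behaviour of the $q$-Pochhammer factors appearing in \cite{FGS1}, which on this locus should make the contribution of color $N$ coincide with that of color $k$. To control the nesting I would induct on $p$, with the trefoil $p=1$ (a single, non-nested sum) as a base case that can be checked by an elementary $q$-binomial identity, and then propagate through the recursive block structure of the formula.

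I expect the factor $L_{N,k}$ to be the main obstacle. Unlike the $M$-locus it is a color-dependent specialization, so the required identity $D_{N}-D_{k}=0$ genuinely couples two distinct colors on a moving locus, and making the cancellation explicit for the $p$-fold nested sum is where the real work lies. The cleaner route, which I would ultimately pursue, is to first establish a cyclotomic Habiro-type expansion $D_{N}=\sum_{j\geq 0}C_{j}(a,q,t)\,\prod_{i}H_{i}(N)$ whose first cyclotomic building blocks specialize to $M$ and $L_{N,k}$; such an expansion produces the stated congruence automatically, in the same way that cyclotomic expansions yielded congruent relations for the non-categorified colored $SU(n)$ invariants in \cite{CLPZ, CLZ}.
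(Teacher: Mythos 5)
The statement you are trying to prove is left as a \emph{conjecture} in the paper: it is proposed after ``an intensive computation of torus knot $T(2,2p+1)$'' and is nowhere proved; the only stated connection to the rest of the paper is the remark that the cyclotomic expansion conjecture (Conj.~1.3/2.3) ``could recover'' it. So there is no proof in the paper to compare yours against, and your attempt must stand on its own. It does not, because both of its essential steps are asserted rather than established. Your ring-theoretic reduction is fine: $M=aq^{-1}+t^{-1}a^{-1}q$ and $L_{N,k}=t^{2}aq^{N+k}+t^{-1}a^{-1}q^{-N-k}$ are, up to units, the coprime irreducibles $a^{2}t+q^{2}$ and $a^{2}t^{3}q^{2(N+k)}+1$, so it suffices to show $D_{N}-D_{k}$ vanishes on each zero locus. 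But for the $M$-locus you only ``expect the nested binomial sum of \cite{FGS1} to collapse'' so that $(-t)^{-Np}\mathcal{P}_{N}$ becomes independent of $N$ at $a^{2}=-q^{2}t^{-1}$; the result you invoke from the paper (the torus-knot theorem with $\alpha(T(m,n))=-(m-1)(n-1)/2$) establishes this vanishing only for the uncolored case $N=1$, not for all colors, so it cannot supply the $N$-independence you need. For the $L_{N,k}$-locus you yourself concede that the required identity ``is where the real work lies'' and offer only a hoped-for reflection symmetry. Neither gap is filled.

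Your proposed fallback --- first prove a Habiro-type cyclotomic expansion $D_{N}=1+\sum_{j}H_{j}\,A_{-1}\prod_{i}(\cdots)B_{N+i-1}$ and deduce the congruence from it --- is exactly the paper's own (conjectural) mechanism, but it is circular as a proof strategy here: that expansion is Conjecture~1.3/2.3 of the paper, proved only for $N=1$ (quasi-alternating knots and torus knots), whereas the congruence you want couples arbitrary colors $N$ and $k$. Moreover, even granting the expansion, the divisibility by $L_{N,k}=B_{N+k}$ is not term-by-term obvious, since the $j$-th term of $D_{N}$ only contains the factors $B_{N},\dots,B_{N+j-1}$ and reaches $B_{N+k}$ only when $j\geq k+1$; an additional argument in the style of \cite{CLPZ} is needed for the low-$j$ terms and you do not supply it. In short, what you have is a sensible reduction of the conjecture to two concrete vanishing statements plus a pointer to the expected structural explanation, but not a proof of either statement.
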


As \cite{CLPZ, CLZ} suggest that there is always a cyclotomic expansion
behind such congruent relations. The reduced colored HOMFLY-PT
superpolynomial of the figure knot $4_{1}$ was obtained in (2.12) of \cite%
{FGS2} (original in \cite{IMMM}). We rearrange the expression of it in the
following way

\begin{equation}
\mathcal{P}_{N}(4_{1};a,q,t)=1+\underset{k=1}{\overset{N}{\sum }}\underset{%
i=1}{\overset{k}{\prod }}\left( \frac{\{N+1-i\}}{\{i\}}%
A_{i-2}(a,q,t)B_{N+i-1}(a,q,t)\right) ,  \label{CatCyclotomicfor4_1}
\end{equation}

where $A_{i}(a,q,t)=aq^{i}+t^{-1}a^{-1}q^{-i}$, $%
B_{i}(a,q,t)=t^{2}aq^{i}+t^{-1}a^{-1}q^{-i}$ and $\{p\}=q^{p}-q^{-p}$.

By setting $a=q^{2}$ and $t=-1$, we have $A_{i-2}(q^{2},q,-1)=\{i\}$, $%
B_{N+i-1}(q^{2},q,-1)=\{N+i+1\}$. Thus we could recover the original
cyclotomic expansion for the figure eight knot $4_{1}$ in the sense of
Harbiro \cite{Hab},

\begin{equation}
J_{N}(4_{1};q)=1+\underset{k=1}{\overset{N}{\sum }}\underset{i=1}{\overset{k}%
{\prod }}\{N+1-i\}\{N+1+i\},
\end{equation}

where $J_{N}(\mathcal{K};q)$ denotes the $N+1$ dimensional colored Jones
polynomial.

Inspired by (\ref{CatCyclotomicfor4_1}), we formulate the following
cyclotomic expansion formula for superpolynomial of triply-graded reduced
colored HOMFLY-PT homology.

\begin{conjecture}
For any knot $\mathcal{K}$, there exists an integer valued invariant $\alpha (\mathcal{K})\in
\mathbb{Z}
$, s.t. the superpolynomial $\mathcal{P}_{N}(\mathcal{K}%
;a,q,t)$ of triply-graded reduced colored HOMFLY-PT homology of a knot $%
\mathcal{K}$ has the following cyclotomic expansion formula%
\begin{equation}
(-t)^{N\alpha (\mathcal{K})}\mathcal{P}_{N}(\mathcal{K};a,q,t)=1+\underset{%
k=1}{\overset{N}{\sum }}H_{k}(\mathcal{K};a,q,t)\left( A_{-1}(a,q,t)\underset%
{i=1}{\overset{k}{\prod }}\left( \frac{\{N+1-i\}}{\{i\}}B_{N+i-1}(a,q,t)%
\right) \right)
\end{equation}

with coefficient functions $H_{k}(\mathcal{K};a,q,t)\in
\mathbb{Z}
\lbrack a^{\pm 1},q^{\pm 1},t^{\pm 1}]$, where $%
A_{i}(a,q,t)=aq^{i}+t^{-1}a^{-1}q^{-i}$, $%
B_{i}(a,q,t)=t^{2}aq^{i}+t^{-1}a^{-1}q^{-i}$ and $\{p\}=q^{p}-q^{-p}$.
\end{conjecture}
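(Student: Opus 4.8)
The plan is to realize the conjecture through the mechanism, established for non-categorified $SU(n)$ invariants in \cite{CLPZ, CLZ}, whereby congruent relations with exactly the stated moduli force a cyclotomic expansion. I would begin by recording the structure of the proposed basis
\[
R_k^{(N)}(a,q,t):=A_{-1}(a,q,t)\prod_{i=1}^{k}\frac{\{N+1-i\}}{\{i\}}\,B_{N+i-1}(a,q,t).
\]
The decisive feature is that the product $\prod_{i=1}^{k}\{N+1-i\}$ runs over $\{N\},\{N-1\},\dots,\{N+1-k\}$ and therefore contains the vanishing factor $\{0\}$ precisely when $k>N$; hence $R_k^{(N)}=0$ for $k>N$ while $R_k^{(k)}\neq 0$ generically, so the defining system for the sequence $H_1,H_2,\dots$ is lower triangular in $N$.

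Given this triangularity, the mere existence of coefficients is essentially formal. Writing $G_N:=(-t)^{N\alpha(\mathcal K)}\mathcal P_N(\mathcal K;a,q,t)-1$, each successive $N$ introduces exactly one new unknown, so one may define recursively
\[
H_N=\frac{1}{R_N^{(N)}}\Big(G_N-\sum_{k=1}^{N-1}H_k\,R_k^{(N)}\Big),
\]
producing a single $N$-independent sequence of rational functions for which the claimed identity holds for every $N$ as an identity in $\mathbb Q(a,q,t)$. Consequently the entire content of the conjecture is (i) the existence of an integer $\alpha(\mathcal K)$ normalizing the additive constant to be exactly $1$, and (ii) the integrality $H_k\in\mathbb Z[a^{\pm1},q^{\pm1},t^{\pm1}]$.

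For (i), I would use that every $R_k^{(N)}$ with $k\geq 1$ carries the factor $A_{-1}$, so the expansion forces $(-t)^{N\alpha}\mathcal P_N\equiv 1 \pmod{A_{-1}}$; demanding this divisibility simultaneously fixes the constant term and constrains $\alpha$. The determining instance is $N=1$, where $R_1^{(1)}=A_{-1}B_1$ and the requirement becomes $A_{-1}B_1\mid (-t)^{\alpha}\mathcal P_1-1$; this is precisely the $N=1$ statement the paper establishes for torus knots, giving $\alpha(T(m,n))=-(m-1)(n-1)/2$.

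The hard part is (ii), and this is where I expect the genuine obstacle to lie. Because the quantum-integer quotient telescopes, the diagonal term simplifies to $R_N^{(N)}=A_{-1}\prod_{j=N}^{2N-1}B_j$, so extracting $H_N$ from the recursion amounts to dividing an explicit Laurent polynomial by $A_{-1}\prod_{j=N}^{2N-1}B_j$; integrality of $H_N$ is then equivalent to this divisibility. Each constituent factor-by-factor divisibility is exactly a congruent relation with a modulus of the form $A_{-1}B_{N+k}$, as in the first conjecture, so (ii) follows once those congruent relations are known. For the torus knots $T(2,2p+1)$ one can verify them directly from the closed formulas of Fuji--Gukov--Sulkowski \cite{FGS1, FGS2}, which is how I would secure the conjecture in that family. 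For a general knot, however, there is no categorified analogue of the quantum-group cabling machinery, nor of Habiro's ring \cite{Hab}, that would yield these congruent relations uniformly; proving (ii) in full generality is therefore the missing ingredient, and at present it can be confirmed only example by example, as the paper does for the homologically thick knots and higher representations it tests.
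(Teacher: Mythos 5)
The statement you are addressing is stated in the paper as a conjecture, and the paper offers no general proof of it: its supporting evidence consists of the $N=1$ theorems (for quasi-alternating knots, via the HOMFLY-PT skein relation combined with the Ozsv\'ath--Szab\'o and Manolescu--Ozsv\'ath thinness results and the discussion in \cite{DGR}; and for torus knots, via the formulas of \cite{DMMSS} and an induction modeled on the Euclidean algorithm) together with tables of examples. Your reduction is sound and isolates exactly the actual content of the conjecture: the triangularity argument showing that rational $H_k$ exist and are forced to be $N$-independent is correct (the telescoping $\prod_{i=1}^{N}\{N+1-i\}/\{i\}=1$ and the vanishing of $R_k^{(N)}$ for $k>N$ are both right), and the identification of the two genuine claims --- the existence of an integer $\alpha(\mathcal K)$ and the integrality of the $H_k$ --- matches where the paper's work actually lies. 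In particular, your observation that the $N=1$ case amounts to $A_{-1}B_1\mid(-t)^{\alpha}\mathcal P_1-1$ is precisely what the paper verifies for torus knots by evaluating the normalized superpolynomial at the two roots $a^2=-q^2t^{-1}$ and $a^2=-q^{-2}t^{-3}$ and showing both specializations equal $1$.

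Two cautions. First, divisibility by $A_{-1}$ alone does not ``fix the constant term and constrain $\alpha$''; one needs both specializations (the roots of $A_{-1}$ and of $B_1$), and even then $\alpha$ is only pinned down because the specialized superpolynomial turns out to be a pure power of $(-t)$ --- this is a computation particular to each knot, not a formal consequence of the ansatz. Second, your passage from integrality of $H_N$ to ``factor-by-factor'' congruences modulo $A_{-1}B_{N+k}$ implicitly uses that $A_{-1},B_N,\dots,B_{2N-1}$ are pairwise coprime irreducibles in the UFD $\mathbb Z[a^{\pm1},q^{\pm1},t^{\pm1}]$ and that the lower $H_k$ are already known to be integral; both points should be stated. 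Note also the paper's own warning (in the remark following the conjecture) that, because of $T(2,5)$, the basis cannot include the factors $A_{i-2}$ inside the product as in the figure-eight formula --- so the analogy with the Habiro-type expansion is looser than your write-up suggests. With those caveats, your conclusion --- that the general statement reduces to congruence relations for which no categorified analogue of the cabling or Habiro-ring machinery is available, so that at present only $N=1$ and finitely many examples are accessible --- accurately reflects the status of the statement in the paper; what you have written is a correct framing of the conjecture, not a proof of it, and no proof exists in the paper either.
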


\begin{remark}
The above Conjecture-Definition for invariant $\alpha (\mathcal{K})$ should be
understood in this way. If the above conjecture of a knot $\mathcal{K}$ is true for $N=1$,
then $\alpha (\mathcal{K})$ is defined. The next level of the conjecture is for $N\geq
2$ by using the same $\alpha (\mathcal{K})$. In this way, $\alpha (\mathcal{K})$ is defined even
if the conjecture is only true for $N=1$.
\end{remark}

\begin{remark}
This Conjecture could recover Conj. 1.2.
\end{remark}

\begin{remark}
$H_{k}(\mathcal{K};a,q,t)$ is independent of $N$, which only depends
on knot $\mathcal{K}$ and integer $k$.
\end{remark}

\begin{remark}
As many examples shows, one can not find such a conjecture for Poincare polynomial of Khovanov's original homology.
This shows that superpolynomial has a nice property than Khovanov's polynomial in the sense of cyclotomic expansion.
A possible reason to explain this phenomenon is that the differential $d_2$ kills the additional terms when one reduce superpolynomial to obtain the Khovanov's polynomial.
\end{remark}

We have the following theorem for quasi-alternating knots.

\begin{theorem}
The Conjecture 1.3 (Conj. 2.3) of case $N=1$ is ture for any quasi-alternating knot $%
\mathcal{K}$. Furthermore, we have
\begin{equation}
\alpha (\mathcal{K})=-\frac{\sigma (\mathcal{K})}{2}
\end{equation}

and the following expansion
\begin{equation}
(-t)^{-\frac{\sigma (\mathcal{K})}{2}}\mathcal{P}_{1}(\mathcal{K}%
;a,q,t)=1+(aq^{-1}+t^{-1}a^{-1}q)(t^{2}aq+t^{-1}a^{-1}q^{-1})H_{1}(\mathcal{K%
};a,q,t).
\end{equation}
\end{theorem}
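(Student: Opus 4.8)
The plan is to reduce everything to the $N=1$ case of the cyclotomic expansion and exploit the theorem of Ozsv\'ath--Szab\'o together with the known specializations of the superpolynomial. The key observation is that for $N=1$ the conjectured expansion collapses to the single identity
\begin{equation}
(-t)^{\alpha(\mathcal{K})}\mathcal{P}_{1}(\mathcal{K};a,q,t)=1+A_{-1}(a,q,t)B_{0}(a,q,t)H_{1}(\mathcal{K};a,q,t),
\end{equation}
where $A_{-1}(a,q,t)=aq^{-1}+t^{-1}a^{-1}q$ and $B_{0}(a,q,t)=t^{2}aq+t^{-1}a^{-1}q^{-1}$ (taking the product over $i=1$ to $1$ and noting $\{N+1-i\}/\{i\}=1$ at $N=1$). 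So the entire statement amounts to proving the existence of a Laurent polynomial $H_{1}(\mathcal{K};a,q,t)\in\mathbb{Z}[a^{\pm1},q^{\pm1},t^{\pm1}]$ satisfying this single equation with $\alpha(\mathcal{K})=-\sigma(\mathcal{K})/2$. Equivalently, after setting $G(a,q,t):=(-t)^{-\sigma/2}\mathcal{P}_{1}(\mathcal{K};a,q,t)-1$, I must show that $G$ is divisible by the product $A_{-1}(a,q,t)B_{0}(a,q,t)$ in the Laurent polynomial ring.

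First I would write down explicitly what $\mathcal{P}_{1}(\mathcal{K};a,q,t)$ is for a quasi-alternating knot. Using the alternating-knot theorem of Ozsv\'ath--Szab\'o (extended to the quasi-alternating case by Manolescu--Ozsv\'ath), the Heegaard--Floer homology $\widehat{HFK}_{i}(\mathcal{K},s)$ is supported on the single diagonal $i=s+\sigma/2$ with rank $|a_{s}|$, the coefficients of the Alexander polynomial. This "thin" structure is precisely the hypothesis under which the Dunfield--Gukov--Rasmussen superpolynomial is determined entirely by the Alexander (or HOMFLY-PT) polynomial and the signature: the fundamental-representation ($N=1$) reduced triply-graded homology of a quasi-alternating knot is thin, so $\mathcal{P}_{1}(\mathcal{K};a,q,t)$ has a closed form built from the HOMFLY-PT polynomial $P(\mathcal{K};a,q)$ with a uniform $t$-grading governed by $\sigma$. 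I would reconstruct $\mathcal{P}_{1}$ in this closed form, which makes the factor $(-t)^{-\sigma/2}$ natural: it removes the overall diagonal shift coming from the signature so that the normalized polynomial has its "head" term equal to $1$.

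Next I would verify the divisibility directly. The two factors $A_{-1}$ and $B_{0}$ correspond to the two canonical differentials of the theory: $A_{-1}(a,q,t)=aq^{-1}+t^{-1}a^{-1}q$ vanishes under the specialization governing the $d_{0}$ (Alexander/Heegaard--Floer) direction, while $B_{0}(a,q,t)=t^{2}aq+t^{-1}a^{-1}q^{-1}$ vanishes under the specialization governing the HOMFLY-PT/$d_{1}$ direction. The strategy is to show $G$ vanishes on each of the two hypersurfaces $\{A_{-1}=0\}$ and $\{B_{0}=0\}$ in the torus $(a,q,t)$. Vanishing on $\{A_{-1}=0\}$ should follow from the condition $\mathcal{P}(\mathcal{K};-1,q,-1)=\Delta_{\mathcal{K}}(q^{2})$ combined with normalization $\Delta_{\mathcal{K}}(1)=1$; vanishing on $\{B_{0}=0\}$ should follow from the HOMFLY-PT specialization $\mathcal{P}(\mathcal{K};q^{n},q,-1)=P(\mathcal{K};q^{n},q)$ together with the fact that $P(\mathcal{K};a,q)$ at the unknot-normalization point evaluates to $1$. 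Since $A_{-1}$ and $B_{0}$ are coprime as Laurent polynomials (their zero loci are distinct hypersurfaces), simultaneous vanishing on both yields divisibility by the product, and integrality of $H_{1}$ follows because the quotient of integer Laurent polynomials by a monic-in-some-variable factor stays integral.

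The main obstacle, I expect, is making the "vanishing on each hypersurface'' step fully rigorous rather than heuristic. The specializations quoted in the introduction give information only along a one-parameter family of points (e.g. $a=q^{n}$, $t=-1$), not along the entire complex hypersurface $\{B_{0}=0\}$, so I cannot simply plug in a specialization and conclude. The honest route is therefore to use the explicit thin closed form of $\mathcal{P}_{1}$ for quasi-alternating knots---where every term is an explicit monomial times a coefficient of $\Delta_{\mathcal{K}}$---and check the factorization algebraically, organizing the Alexander polynomial by its symmetry $\Delta_{\mathcal{K}}(q)=\Delta_{\mathcal{K}}(q^{-1})$ so that the numerator $G$ manifestly groups into multiples of $A_{-1}B_{0}$. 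I would handle this by induction on the number of nonzero Alexander coefficients, peeling off one symmetric pair $a_{s}(q^{s}+q^{-s})$ at a time and matching it against a corresponding summand of $A_{-1}B_{0}H_{1}$; the base case is the unknot, where $G=0$ and $H_{1}=0$ trivially.
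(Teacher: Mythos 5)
Your setup is the same as the paper's up to the crucial step: the $N=1$ case does reduce to a single divisibility statement, thinness of quasi-alternating knots (Ozsv\'ath--Szab\'o, Manolescu--Ozsv\'ath, together with Sec.~5.2 of Dunfield--Gukov--Rasmussen) does pin down $\mathcal{P}_{1}$ as the two-variable HOMFLY-PT polynomial equipped with a signature-controlled $t$-grading, and $(-t)^{-\sigma/2}$ does normalize the head term to $1$. The gap is in how you establish divisibility by $A_{-1}B_{0}$. Your first route (vanishing on the two hypersurfaces) you correctly flag as non-rigorous, since the quoted specializations only constrain one-parameter slices. But your fallback targets the wrong polynomial: you propose to organize the computation by the coefficients and the symmetry of the Alexander polynomial $\Delta_{\mathcal{K}}$ and to induct on the number of its nonzero coefficients. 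The thin closed form of the triply-graded homology is governed by the two-variable HOMFLY-PT polynomial $P(\mathcal{K};a,q)$, not by $\Delta_{\mathcal{K}}$, which is only a one-variable reduction of it and does not determine $P$, hence cannot determine $H_{1}$. Moreover, the Alexander symmetry can at best produce the square of a single factor (the divisibility of $\Delta_{\mathcal{K}}-1$ by $(q-q^{-1})^{2}$), whereas here you need the product of two \emph{distinct} factors $A_{-1}$ and $B_{0}$, which before the regrading are $aq^{-1}-a^{-1}q$ and $aq-a^{-1}q^{-1}$; so the inductive peeling of symmetric pairs $a_{s}(q^{s}+q^{-s})$ cannot close.

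The missing ingredient is the Lickorish--Millett-type structure of the HOMFLY-PT polynomial itself, which the paper extracts from the skein relation:
\begin{equation*}
P(\mathcal{K};a,q)=1+(aq^{-1}-a^{-1}q)(aq-a^{-1}q^{-1})\,f(\mathcal{K};a,q),\qquad f\in\mathbb{Z}[a^{\pm1},(q-q^{-1})^{2}].
\end{equation*}
Granting this, thinness says $(-t)^{-\sigma(\mathcal{K})/2}\mathcal{P}_{1}(\mathcal{K};a,q,t)$ is obtained from $P(\mathcal{K};a,q)$ by the monomial substitution $a\mapsto at$, $q\mapsto\sqrt{-1}\,qt^{1/2}$, under which the two skein factors become exactly $A_{-1}(a,q,t)$ and $B_{0}(a,q,t)$ up to mutually cancelling unit monomials, and $H_{1}(\mathcal{K};a,q,t)=f(\mathcal{K};at,\sqrt{-1}\,qt^{1/2})$ lands in $\mathbb{Z}[a^{\pm1},q^{\pm1},t^{\pm1}]$ precisely because $f$ involves only $a^{\pm1}$ and even powers of $q-q^{-1}$, so no half-integral powers of $t$ or factors of $\sqrt{-1}$ survive. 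This single substitution replaces both your hypersurface argument and your induction.
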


We also tested many homologically thick knots such as $10_{124}$, $10_{128}$%
, $10_{132}$, $10_{136}$, $10_{139}$, $10_{145}$, $10_{152}$, $10_{153}$, $%
10_{154}$ and $10_{161}$ to illustrate this conjecture as well as many
examples with higher representation.

\bigskip

Torus knots and torus links are studied completely in \cite{DMMSS}. Based on
these highly nontrivial computations, we are able to prove the following
theorem for all torus knots.

\begin{theorem}
For any coprime pair $(m,n)=1$, where $m<n$, cyclotomic expansion conjecture
(Conj. 1.3) of $N=1$ is true for torus knot $T(m,n)$ and we have $\alpha
(T(m,n))=-(m-1)(n-1)/2$.
\end{theorem}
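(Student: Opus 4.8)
The plan is to start by unpacking the $N=1$ case of Conjecture 1.3. When $N=1$ the sum over $k$ collapses to its single term $k=1$, and since $\{N+1-i\}/\{i\}=\{1\}/\{1\}=1$ and $B_{N+i-1}=B_1$ for $i=1$, the conjectured identity reduces to
\begin{equation}
(-t)^{\alpha(T(m,n))}\mathcal{P}_1(T(m,n);a,q,t)=1+A_{-1}(a,q,t)\,B_1(a,q,t)\,H_1(T(m,n);a,q,t),
\end{equation}
exactly the shape recorded in the preceding theorem for quasi-alternating knots. Thus, with $\alpha=-(m-1)(n-1)/2$, the whole statement is equivalent to the assertion that the Laurent polynomial $f:=(-t)^{\alpha}\mathcal{P}_1(T(m,n);a,q,t)-1$ is divisible by $A_{-1}B_1$ in $\mathbb{Z}[a^{\pm 1},q^{\pm 1},t^{\pm 1}]$, the quotient then being the integral coefficient $H_1$. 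The only external input is the explicit closed formula for the reduced fundamental-representation superpolynomial $\mathcal{P}_1(T(m,n))$ coming from the torus-knot computation of \cite{DMMSS}.

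Next I would reduce the product divisibility to two separate ones. Writing $A_{-1}=a^{-1}q^{-1}(a^2+t^{-1}q^2)$ and $B_1=t^{-1}a^{-1}q^{-1}(t^3a^2q^2+1)$, the two essential factors $a^2+t^{-1}q^2$ and $t^3a^2q^2+1$ are coprime irreducibles of the UFD $\mathbb{Z}[a^{\pm 1},q^{\pm 1},t^{\pm 1}]$ (they are linear and distinct as polynomials in $a^2$). Hence $A_{-1}B_1\mid f$ if and only if $A_{-1}\mid f$ and $B_1\mid f$, and since the reduced superpolynomial of a knot involves only even powers of $a$, each divisibility is equivalent to the vanishing of $f$ on the corresponding zero locus. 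Concretely, $A_{-1}\mid f$ amounts to $\mathcal{P}_1(T(m,n))=(-t)^{-\alpha}$ after the substitution $a^2=-t^{-1}q^2$, while $B_1\mid f$ amounts to the same monomial identity after $a^2=-t^{-3}q^{-2}$.

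The heart of the argument is therefore to show that on each of these two loci the explicit \cite{DMMSS} formula for $\mathcal{P}_1(T(m,n))$ collapses to the single monomial $(-t)^{-\alpha}=(-t)^{(m-1)(n-1)/2}$. These two substitutions are precisely the specializations selecting the canceling differentials $d_{-1}$ and $d_1$ of the reduced fundamental-representation theory, under which the homology of any knot becomes one-dimensional; my task is to verify this degeneration directly from the formula. The superpolynomial is a finite sum of monomials indexed by the pieces of the Rosso--Jones decomposition of the $m$-cable, each weighted by a ratio of quantum-type factors and a $t$-refined framing term. I would show that on each locus all but one of these weights vanish (the numerators acquire a factor matching one of $a^2+t^{-1}q^2$ or $t^3a^2q^2+1$), so the sum telescopes to one surviving term, and then read off the power of $-t$ in that term. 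Matching this power against the claimed value both proves divisibility and pins down $\alpha(T(m,n))=-(m-1)(n-1)/2$; as a consistency check, setting $t=-1$ sends the first locus to $a=\pm q$ and the second to $a=\pm q^{-1}$, where the HOMFLY polynomial of every knot trivializes, in agreement with $(-t)^{\alpha}=1$.

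The main obstacle I anticipate is this collapse for \emph{general} coprime $(m,n)$: the \cite{DMMSS} formula is an intricate sum over Young diagrams with twist/framing factors that themselves carry nontrivial $t$-dependence, and one must prove the term-by-term cancellation uniformly in $(m,n)$ rather than case by case, while tracking the $t$-grading carefully enough to land on exactly $(m-1)(n-1)/2$, the Seifert and hence (by the Milnor conjecture) the slice genus. Unlike the quasi-alternating case above, here $\alpha$ is \emph{not} $-\sigma/2$, so the surviving exponent genuinely encodes the genus and cannot be extracted from the signature; controlling it is where the real work lies.
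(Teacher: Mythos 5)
Your overall strategy coincides with the paper's: reduce the $N=1$ statement to the two monomial identities $\mathcal{P}_1(T(m,n);a,q,t)|_{a^2=-t^{-1}q^2}=(-t)^{-\alpha}$ and $\mathcal{P}_1(T(m,n);a,q,t)|_{a^2=-t^{-3}q^{-2}}=(-t)^{-\alpha}$ (your coprimality argument in $\mathbb{Z}[a^{\pm1},q^{\pm1},t^{\pm1}]$ is a cleaner justification of this reduction than the paper gives), and then observe that at each specialization all but one term of the Dunin-Barkowski--Mironov--Morozov--Sleptsov--Smirnov sum over partitions $Q$ with $|Q|=m$ dies, because every $M_Q^{\ast}$ except $M_{(m)}^{\ast}$ carries the factor $A\widetilde{t}^{-1}-A^{-1}\widetilde{t}$ and every one except $M_{(1^m)}^{\ast}$ carries $A\widetilde{q}-A^{-1}\widetilde{q}^{-1}$, and these vanish at $a^2=-q^2t^{-1}$ and $a^2=-q^{-2}t^{-3}$ respectively. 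Up to this point you are on the paper's track.

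The genuine gap is in your final step, where you propose to ``read off the power of $-t$'' from the surviving term. That term is not a visible monomial: it equals an explicit prefactor times $c_{(1)}^{(m)}M_{(m)}^{\ast}$ (resp.\ $c_{(1)}^{(1^m)}M_{(1^m)}^{\ast}$), where $c_{(1)}^{Q}$ are Adams/Rosso--Jones coefficients that are not computed in general, and $M_{(m)}^{\ast}$ is a product of ratios whose value at the specialization is not immediate. You flag this as ``where the real work lies'' but supply no mechanism to resolve it. The paper's missing ingredient is an induction along the Euclidean algorithm: writing $n=km+p$, the surviving term for $T(m,km+p)$ at either specialization differs from that of $T(m,p)$ only by an explicit monomial prefactor which, after including the $(-t)^{(m-1)km/2}$ shift, evaluates to $1$ on the locus; then the symmetry $\mathcal{P}_1(T(m,p))=\mathcal{P}_1(T(p,m))$ with $p<m$ reduces to a strictly smaller torus knot, terminating at the unknot $T(1,\cdot)$. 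This simultaneously sidesteps the unknown $c_{(1)}^{Q}$ and produces the exponent $(m-1)(n-1)/2$ by accumulating the prefactors, which is precisely how $\alpha(T(m,n))$ gets pinned down. Without this induction (or an equivalent direct evaluation of $c_{(1)}^{(m)}$, $c_{(1)}^{(1^m)}$ and the specialized $M^{\ast}$'s), your argument establishes which single term survives but not its value, so neither the divisibility nor the claimed value of $\alpha$ is proved.
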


Now we are considering a problem relating to the sliceness of a knot.

\begin{definition}
The smooth 4-ball genus $g_{4}(\mathcal{K})$ of a knot $\mathcal{K}$ is the
minimum genus of a surface smoothly embedded in the 4-ball $B^{4}$ with
boundary the knot. In particular, a knot $\mathcal{K}\subset S^{3}$ is
called smoothly slice if $g_{4}(\mathcal{K})=0$.
\end{definition}

\begin{remark}
The invariant $\alpha (T(m,n))=-(m-1)(n-1)/2$ suggest a very close relation
between the above theorem and the following Milnor Conjecture, which was
first proved by P. B. Kronheimer and T. S. Mrowka in \cite{KM}
\end{remark}

\begin{conjecture}[Milnor]
The smooth 4-ball genus of torus knot $T(m,n)$ is $(m-1)(n-1)/2$.
\end{conjecture}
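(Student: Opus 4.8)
The plan is to prove $g_4(T(m,n)) = (m-1)(n-1)/2$ by matching upper and lower bounds, the lower bound being the substantive half. For the upper bound I would produce an explicit spanning surface: present $T(m,n)$ as the closure of the positive braid $(\sigma_1\sigma_2\cdots\sigma_{m-1})^n$ and run Seifert's algorithm on this diagram. The resulting surface $\Sigma$ has $m$ Seifert circles and $n(m-1)$ crossings, so $\chi(\Sigma) = m - n(m-1) = 1 - (m-1)(n-1)$; as $\Sigma$ is connected with a single boundary circle, its genus is exactly $(m-1)(n-1)/2$. Pushing the interior of $\Sigma$ slightly into $B^4$ then gives $g_4(T(m,n)) \le (m-1)(n-1)/2$.

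For the lower bound $g_4(T(m,n)) \ge (m-1)(n-1)/2$ the approach most in keeping with this paper would be to first establish its own conjectural inequality relating the invariant $\alpha$ to the slice genus; granting that, the value $\alpha(T(m,n)) = -(m-1)(n-1)/2$ from Theorem 1.9 would immediately deliver the bound. Since that inequality is not yet available, I would instead invoke an established slice-genus bound coming from categorification. Let $s(\mathcal{K})$ be Rasmussen's invariant. I need two inputs: the universal inequality $|s(\mathcal{K})| \le 2g_4(\mathcal{K})$, and the computation $s(T(m,n)) = (m-1)(n-1)$. For the latter I use that torus knots are positive, so that for the positive diagram above $s(T(m,n)) = c - O + 1$ with $c$ the number of crossings and $O$ the number of Seifert circles; here $c = n(m-1)$ and $O = m$, whence $s(T(m,n)) = n(m-1) - m + 1 = (m-1)(n-1)$. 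Therefore $g_4(T(m,n)) \ge s(T(m,n))/2 = (m-1)(n-1)/2$, and combined with the upper bound this proves the conjecture. The identical argument runs with the Heegaard--Floer concordance invariant $\tau$ of Ozsv\'{a}th--Szab\'{o} in place of $s$, using $|\tau(\mathcal{K})| \le g_4(\mathcal{K})$ and $\tau(T(m,n)) = (m-1)(n-1)/2 = -\alpha(T(m,n))$; this variant is the one most directly tied to the Heegaard--Floer theory recalled in Theorem 1.1.

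The main obstacle is the universal slice-genus inequality $|s(\mathcal{K})| \le 2g_4(\mathcal{K})$: this is exactly the step that converts the combinatorial data into a statement about smoothly embedded surfaces in $B^4$. Proving it requires the functoriality of Khovanov homology under smooth cobordisms in $S^3 \times [0,1]$, realized as a band-by-band analysis showing that each elementary cobordism alters $s$ by at most the matching change in Euler characteristic, so that a genus-$g$ slice surface forces $|s| \le 2g$. Once this is in hand everything else --- the braid presentation, the Euler-characteristic bookkeeping, and the positivity formula for $s$ --- is routine. Historically this lower bound was obtained first by Kronheimer--Mrowka through gauge theory, replacing the $s$-invariant input by the adjunction inequality for anti-self-dual connections (the generalized Thom conjecture) and its genus-minimizing consequence for embedded surfaces; that route reaches the same estimate $g_4(T(m,n)) \ge (m-1)(n-1)/2$ and would be my fallback were the categorified functoriality unavailable.
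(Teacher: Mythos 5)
Your argument is correct: the Seifert-surface count on the positive braid closure gives the upper bound, and Rasmussen's inequality $|s(\mathcal{K})|\leq 2g_{4}(\mathcal{K})$ together with $s=c-O+1$ for positive diagrams gives the matching lower bound, which is precisely Rasmussen's combinatorial proof cited in the paper (with the Kronheimer--Mrowka gauge-theoretic proof as the historical original, also cited). The paper itself offers no proof of this statement --- it records it as a known theorem and defers to \cite{KM} and \cite{Ras2} --- so your write-up is a faithful reconstruction of exactly the route the paper points to, and you are right that the paper's own inequality $|\alpha(\mathcal{K})|\leq g_{4}(\mathcal{K})$ cannot be used here since it is only conjectural.
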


Rasmussen \cite{Ras2} introduced a knot concordant invariant $s(\mathcal{K})$%
,\ which is a lower bound for the smooth 4-ball genus for knots in the
following sense.

\begin{theorem}[Rasmussen]
For any knot $\mathcal{K}\subset S^{3}$, we have the following relation%
\begin{equation}
|s(\mathcal{K})|\leq 2g_{4}(\mathcal{K}).
\end{equation}
\end{theorem}

In addition, Rasmussen again proved Milnor Conjecture by a purely
combinatorial method in \cite{Ras2}.

\bigskip

Based on all the knots we tested in tables in Appendix and proved via
theorem, we are able to propose the following conjecture.

\begin{conjecture}
The invariant $\alpha (\mathcal{K})$ (determined by cyclotomic expansion
conjecture (Conj 1.3 or Conj. 2.3) for $N=1$) is a lower bound for smooth
4-ball genus $g_{4}(\mathcal{K})$, i.e.%
\begin{equation}
|\alpha (\mathcal{K})|\leq g_{4}(\mathcal{K}).
\end{equation}
\end{conjecture}

\begin{remark}
For many knots we tested, it is identical to the Ozsv\'{a}th-Szab\'{o}'s $%
\tau $ invariant and Rasmussen's $s$ invariant.
\end{remark}

\bigskip

Inspired by the cyclotomic expansion formula, finally we propose volume
conjecture for $SU(n)$ specialized superpolynomials of HOMFLY homology as
follows

\begin{conjecture}[Volume Conjecture for $SU(n)$ specialized superpolynomial]

For any hyperbolic knot $\mathcal{K}$, we have%
\begin{equation*}
2\pi \underset{N\rightarrow \infty }{\lim }\frac{\log \mathcal{P}_{N}(%
\mathcal{K};q^{n},q,q^{-(N+n-1)})|_{q=e^{\frac{\pi \sqrt{-1}}{N+b}}}}{N+1}%
=Vol(S^{3}\backslash \mathcal{K})+\sqrt{-1}CS(S^{3}\backslash \mathcal{K}) \text{ }(mod\sqrt{-1}\pi ^{2}%
\mathbb{Z}
),
\end{equation*}

where $b\geq 1$ and $\frac{n-1-b}{2}$ is not a positive integer.
\end{conjecture}

\begin{remark}
The condition of this conjecture is
much more relaxed than condition of former Volume conjectures, because here $b$ can be
any larger integers. For example, original volume conjecture is only valid for $%
n=2$ and $b=1$, but this volume conjecture is valid for all positive integer $b$
with $n=2$.
\end{remark}

\begin{remark}
It will be interesting to know the relationship of between this volume conjecture and
the one proposed by Fuji, Gukov and Sulkowski in \cite{FGS1}, where they used categorified A-polynomials
of knots.
\end{remark}

We prove this volume conjecture for the case of figure eight knot $4_{1}$.

\begin{theorem}
The above volume conjecture is valid for figure eight knot $4_{1}$.
\end{theorem}

Then we directly studied cyclotomic expansion for superpolynomial $\mathcal{F%
}_{N}(\mathcal{K};a,q,t)$ of triply-graded reduced colored Kauffman homology
formulated by S. Gukov and J. Walcher in \cite{GW}. We obtain the similar
expansion conjecture.

\begin{conjecture}
For any knot $\mathcal{K}$, there exists an integer valued invariant $\beta (\mathcal{K})\in
\mathbb{Z}
$, s.t. the superpolynomial $\mathcal{F}_{N}(\mathcal{K};a,q,t)$
of triply-graded reduced colored Kauffman homology of a knot $\mathcal{K}$
has the following cyclotomic expansion formula%
\begin{equation}
(-t)^{N\beta (\mathcal{K})}\mathcal{F}_{N}(\mathcal{K};a^{2},q^{2},t)=1+%
\underset{k=1}{\overset{N}{\sum }}F_{k}(\mathcal{K};a,q,t)\left( A_{-1}(a,q,t)%
\underset{i=1}{\overset{k}{\prod }}\left( \frac{\{2(N+1-i)\}}{\{2i\}}%
B_{N+i-2}(a^{2},q^{2},t)\right) \right)
\end{equation}

with coefficient functions $F_{k}(\mathcal{K};a,q,t)\in
\mathbb{Z}
\lbrack a^{\pm 1},q^{\pm 1},t^{\pm 1}]$, where $%
A_{i}(a,q,t)=aq^{i}+t^{-1}a^{-1}q^{-i}$, $%
B_{i}(a,q,t)=t^{2}aq^{i}+t^{-1}a^{-1}q^{-i}$ and $\{p\}=q^{p}-q^{-p}$.

In particular, one further have $\frac{F_{1}(\mathcal{K};a,q,t)}{%
taq+t^{-1}a^{-1}q^{-1}}\in
\mathbb{Z}
\lbrack a^{\pm 1},q^{\pm 1},t^{\pm 1}].$
\end{conjecture}

\begin{remark}
The above Conjecture-Definition for invariant $\beta (\mathcal{K})$ should be
understood in this way. If the above conjecture of a knot $\mathcal{K}$ is true for $N=1$,
then $\beta (\mathcal{K})$ is defined. The next level of the conjecutre is for $N\geq
2$ by using the same $\beta (\mathcal{K})$. In this way, $\beta (\mathcal{K})$ is defined even
if the conjecture is only true for $N=1$.
\end{remark}

\begin{remark}
$F_{k}(\mathcal{K};a,q,t)$ is independent of $N$, which only depends
on knot $\mathcal{K}$ and integer $k$.
\end{remark}

We tested many examples which also involved homologically thick knot such as
$8_{19}$ and $9_{42}$ and higher representation of $3_{1}$ and $4_{1}$.

\bigskip

Because Alexander polynomial and Heegaard-Floer knot polynomials can be
deduced from HOMFLY-PT polynomial and superpolynomial of triply-graded
reduced uncolored HOMFLY-PT homology by setting $a=1$ and $a=t^{-1}$
respectively (under certain differential $d_{0}$ for superpolynomial case).

For many non-trivial examples we tested, we find the following expansion
formula for Poincare polynomial of Heegaard-Floer knot homology.

For any knot $\mathcal{K}$, there exists an integer valued invariant $\gamma
(\mathcal{K})\in
\mathbb{Z}
$, s.t. Poincare polynomial $HFK(\mathcal{K};q^{2},t)$ of Heegaard-Floer
knot homology of a knot $\mathcal{K}$ has the following expansion formula%
\begin{equation}
(-t)^{\gamma (\mathcal{K})}HFK(\mathcal{K};q^{2},t)=1+KF(\mathcal{K}%
;q,t)(q+t^{-1}q^{-1})^{2}
\end{equation}

with coefficient functions $KF(\mathcal{K};q,t)\in
\mathbb{Z}
\lbrack q^{\pm 1},t^{\pm 1}]$.

We test the above expression of homologically thick knots $8_{19}$, $9_{42}$%
, $10_{124}$, $10_{128}$, $10_{132}$, $10_{136}$, $10_{139}$, $10_{145}$, $%
10_{152}$, $10_{153}$, $10_{154}$, $10_{161}$ and $41$ homologically thick
knots up to 11 crossings. We also prove two examples of Whitehead double for
this expansion formula.

\bigskip

In section $1$, we discuss the superpolynomial associated to triply-graded
reduced colored HOMFLY-PT homology and argue the reason why we formulate the
cyclotomic expansion conjecture in this way. We tested a lot of examples for
the conjecture by using formulas from various references In addition, we
prove the torus knot $T(m,n)$ for $N=1$ case and show that the corresponding
invariants $\alpha (T(m,n))=-(m-1)(n-1)/2$. Finally we further propose a
conjecture that $|\alpha (\mathcal{K})|$ could serve as a lower bound for
the smooth 4-ball genus. In section $2$, we propose a volume conjecture for $%
SU(n)$ specialized superpolynomial of colored HOMFLY homology with an
emphasis on the motivation. We prove the figure eight knot for this
conjecture. In section $3$, we study the cyclotomic expansion for
superpolynomial associated to triply-graded reduced colored Kauffman
homology. Again we provide many supporting examples from the literatures. In
section $4$, we study an expansion formula for Poincare polynomial of
Heegaard-Floer knot homology. Many homologically thick knot such as
Whitehead doubles are provided.  In section $5$, we provide many supporting
examples to the conjectures proposed in the previous sections.

\bigskip

\textbf{Acknowledgements.} I would like to thank Nicolai Reshetikhin for
introducing me to the area of categorification in the summer of 2006, I
thank Kefeng Liu and Shengmao Zhu for long term collaboration and many
helpful discussion on project of various LMOV conjectures and congruent
skein relations. I also thank Giovanni Felder, Matthew Hedden, Stanislav
Jabuka, Andy Manion, Ciprian Manolescu, Jiajun Wang, Ben Webster and Hao Wu
for helpful discussion and email correspondence on quantum invariants,
categorification and superpolynomials. Finally I would like to thank Sergei
Gukov, Yi Ni and Krzysztof Karol Putyra for the lightening discussion who
told the author about the current situation in categorification and thank
Jun Murakami for many suggestions after he read the first version of this
paper. The research of the author is supported by the National Centre of
Competence in Research SwissMAP of the Swiss National Science Foundation.

\section{Superpolynomials of colored HOMFLY-PT homology}

After we did an intensive computation of torus knot $T(2,2p+1)$, we propose
the following conjecture of congruent relations,

\begin{conjecture}
The superpolynomial of triply-graded reduced colored HOMFLY-PT homology has the following
congruent relations for torus knot $T(2,2p+1)$%
\begin{eqnarray}
(-t)^{-Np}\mathcal{P}_{N}(T(2,2p+1);a,q,t) &\equiv &(-t)^{-kp}\mathcal{P}%
_{k}(T(2,2p+1);a,q,t)  \notag \\
&&mod(aq^{-1}+t^{-1}a^{-1}q)(t^{2}aq^{N+k}+t^{-1}a^{-1}q^{-N-k}),
\end{eqnarray}

where $\mathcal{P}_{N}(\mathcal{K};a,q,t)$ denote the superpolynomial of
triply-graded reduced colored HOMFLY-PT homology of a knot $\mathcal{K}$
with $N$-th symmetric power of the fundamental representation.
\end{conjecture}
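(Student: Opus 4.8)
The plan is to derive the congruence from the cyclotomic expansion of Conjecture 1.3, specialized to the family $T(2,2p+1)$, for which the torus-knot theorem gives $\alpha(T(2,2p+1))=-(2-1)(2p+1-1)/2=-p$. Hence the normalization $(-t)^{-Np}$ in the statement is precisely $(-t)^{N\alpha}$, and the first task is to establish, for this family and \emph{all} $N$, an expansion
\begin{equation*}
(-t)^{-Np}\mathcal{P}_N(T(2,2p+1);a,q,t)=1+\sum_{j=1}^{N}H_j\,A_{-1}(a,q,t)\prod_{i=1}^{j}\frac{\{N+1-i\}}{\{i\}}\,B_{N+i-1}(a,q,t),
\end{equation*}
with coefficients $H_j=H_j(T(2,2p+1);a,q,t)\in\mathbb{Z}[a^{\pm1},q^{\pm1},t^{\pm1}]$ \emph{independent of} $N$. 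Since each $\prod_{i=1}^{j}\{N+1-i\}/\{i\}$ is a Gaussian binomial, hence a genuine Laurent polynomial, every summand lies in the Laurent ring. By the $N\leftrightarrow k$ symmetry of the assertion I may assume $N\geq k$.

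Granting this expansion, the congruence becomes formal. Set $D:=(-t)^{-Np}\mathcal{P}_N-(-t)^{-kp}\mathcal{P}_k$. The two constant terms cancel and every remaining summand carries the common factor $A_{-1}(a,q,t)=aq^{-1}+t^{-1}a^{-1}q$, so $A_{-1}\mid D$. It then remains to prove $B_{N+k}\mid D$, where $B_{N+k}(a,q,t)=t^2aq^{N+k}+t^{-1}a^{-1}q^{-N-k}$; as $A_{-1}$ and $B_{N+k}$ are non-associate irreducibles in the UFD $\mathbb{Z}[a^{\pm1},q^{\pm1},t^{\pm1}]$, divisibility by each yields divisibility by their product, which is exactly the modulus in the statement.

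The heart is a reduction modulo $B_{N+k}$. From $t^2aq^{N+k}\equiv -t^{-1}a^{-1}q^{-N-k}$ one obtains the reflection identity $B_{N+k+m}\equiv -t^{-1}a^{-1}q^{-N-k}\{m\}\pmod{B_{N+k}}$ for every $m\in\mathbb{Z}$. Two things follow. First, on the $N$-side any summand with $j\geq k+1$ contains the factor $B_{N+k}$ (take $i=k+1$) and so vanishes modulo $B_{N+k}$, while the $k$-side has no such summands. Second, for $1\leq j\leq k$ the reflection identity gives $\prod_{i=1}^{j}B_{N+i-1}\equiv (t^{-1}a^{-1}q^{-N-k})^{j}\prod_{i=1}^{j}\{k+1-i\}$ and $\prod_{i=1}^{j}B_{k+i-1}\equiv (t^{-1}a^{-1}q^{-N-k})^{j}\prod_{i=1}^{j}\{N+1-i\}$, so the $j$-th summands of the two sides reduce to $H_j\,A_{-1}\,(t^{-1}a^{-1}q^{-N-k})^{j}$ times $\left(\prod_{i=1}^{j}\frac{\{N+1-i\}}{\{i\}}\right)\left(\prod_{i=1}^{j}\{k+1-i\}\right)$ and $\left(\prod_{i=1}^{j}\frac{\{k+1-i\}}{\{i\}}\right)\left(\prod_{i=1}^{j}\{N+1-i\}\right)$ respectively. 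These two products are literally equal, and $H_j$ is the same on both sides by its $N$-independence, so the $j$-th terms cancel modulo $B_{N+k}$. Summing over $j$ gives $D\equiv 0\pmod{B_{N+k}}$, and the congruence follows.

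The main obstacle is the very first step: the cyclotomic expansion is only conjectural for $N\geq 2$, so for $T(2,2p+1)$ it must be proved outright by rearranging the explicit Fuji--Gukov--Sulkowski formula \cite{FGS1} into the displayed shape and checking that the resulting $H_j$ are Laurent polynomials that do not depend on $N$. I expect this repackaging --- controlling the $q$-binomial recombination together with the $a$- and $t$-dependence uniformly in $N$ --- to be the genuinely hard part; once it is in hand, the reflection identity and the coprimality of $A_{-1}$ and $B_{N+k}$ make everything else routine. As a fallback one can bypass the expansion and substitute the two roots of the modulus, namely the specializations $a^{2}t=-q^{2}$ and $t^{3}a^{2}q^{2(N+k)}=-1$, directly into the closed formula, reducing the congruence to an identity between explicit sums; this avoids proving the full expansion at the cost of a heavier computation.
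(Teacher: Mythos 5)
The statement you are proving is presented in the paper as a \emph{conjecture} (Conjecture 1.2/2.1): the author offers no proof, only the assertion that it was verified by ``intensive computation'' on the closed formulas of Fuji--Gukov--Sulkowski, together with Remark 1.5 stating that the cyclotomic expansion conjecture (Conj.~1.3/2.3) ``could recover'' it. Your proposal is, in substance, a careful write-up of exactly that remark: the reduction you give --- cancel the constant terms, observe that $A_{-1}$ divides every remaining summand, then work modulo $B_{N+k}$ using the reflection identity $B_{N+k+m}\equiv -t^{-1}a^{-1}q^{-N-k}\{m\}$ to kill the summands with $j\geq k+1$ and to match the $j$-th summands of the two sides via the symmetry of $\bigl(\prod_{i\leq j}\{N+1-i\}\bigr)\bigl(\prod_{i\leq j}\{k+1-i\}\bigr)$ --- is correct, and the coprimality of $A_{-1}$ and $B_{N+k}$ in the Laurent UFD does combine the two divisibilities as you say. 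I checked the sign in the reflection identity ($\{i-1-k\}=-\{k+1-i\}$ absorbs the minus sign) and the $N$-independence of $H_j$ is indeed the point where the two sides lock together.

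The genuine gap is the one you yourself flag, and it cannot be waved away: the entire argument is conditional on the cyclotomic expansion holding for $T(2,2p+1)$ for \emph{all} $N$ with $N$-independent Laurent-polynomial coefficients $H_j$. That input is the full strength of Conjecture 1.3 for this family and is not established anywhere --- the paper's Theorem 2.6 proves only the $N=1$ case (giving $\alpha(T(2,2p+1))=-p$, hence only the $(N,k)=(1,0)$ instance of the congruence), and the $N\geq 2$ structure, in particular the $N$-independence of the coefficients, is precisely what makes your cancellation work and precisely what is open. So what you have is a correct proof of the implication ``Conj.~1.3 for $T(2,2p+1)$ $\Rightarrow$ Conj.~1.2,'' which the paper already asserts without details, not a proof of the statement itself. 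Your fallback --- substituting the two roots $a^2t=-q^2$ and $t^3a^2q^{2(N+k)}=-1$ of the modulus directly into the FGS closed formula --- is the more promising unconditional route (it parallels the evaluation technique the paper uses to prove Theorem 2.6 for $N=1$), but as written it is only a plan; note also that since both factors of the modulus are, up to units, polynomials in $a^2$, you would need to separate the even and odd $a$-parts of the difference before arguing that vanishing under the substitution implies divisibility.
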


\begin{remark}
By setting $a=q^{n}$ and $t=-1$, the above congruent relations reduced to%
\begin{equation}
J_{N}^{SU(n)}(T(2,2p+1);a,q,t)\equiv J_{k}^{SU(n)}(T(2,2p+1);a,q,t)\text{ }%
mod(q^{n-1}-q^{1-n})(q^{N+k+n}-q^{-N-k-n}),
\end{equation}%
where $J_{N}^{SU(n)}(\mathcal{K};a,q,t)$ denote a colored $SU(n)$ invariants
of a knot $\mathcal{K}$.

Again by setting $n=2$, the above congruent relations reduced to%
\begin{equation}
J_{N}(T(2,2p+1);a,q,t)\equiv J_{k}(T(2,2p+1);a,q,t)\text{ }%
mod(q-q^{-1})(q^{N+k+2}-q^{-N-k-2}),
\end{equation}%
where $J_{N}(\mathcal{K};a,q,t)$ is just the $N+1$ dimensional colored Jones polynomial of a
knot $\mathcal{K}$.
\end{remark}

The above two congruent relations appears in a joint work of the author with
K. Liu, P. Peng and S. Zhu \cite{CLPZ}. But these two congruent relations
obtained by reduction from the categorified one are actually weaker than
those in \cite{CLPZ}. It is somewhat mysterious that either the
categorification procedure or a general $a$ let the congruent relations loss
the $mod(q^{N-k}-q^{k-N})$ part compared to \cite{CLPZ}.

Inspired by (\ref{CatCyclotomicfor4_1}), we formulate the following
cyclotomic expansion formula for superpolynomial of triply-graded HOMFLY-PT
homology.

\begin{conjecture}
There exists an integer valued invariant $\alpha (\mathcal{K})\in
\mathbb{Z}
$, s.t. superpolynomial $\mathcal{P}_{N}(\mathcal{K}%
;a,q,t)$ of triply-graded reduced colored HOMFLY-PT homology of a knot $%
\mathcal{K}$ has the following cyclotomic expansion formula%
\begin{equation}
(-t)^{N\alpha (\mathcal{K})}\mathcal{P}_{N}(\mathcal{K};a,q,t)=1+\underset{%
k=1}{\overset{N}{\sum }}H_{k}(\mathcal{K};a,q,t)\left( A_{-1}(a,q,t)\underset%
{i=1}{\overset{k}{\prod }}\left( \frac{\{N+1-i\}}{\{i\}}B_{N+i-1}(a,q,t)%
\right) \right)
\end{equation}

with coefficient functions $H_{k}(\mathcal{K};a,q,t)\in
\mathbb{Z}
\lbrack a^{\pm 1},q^{\pm 1},t^{\pm 1}]$, where $%
A_{i}(a,q,t)=aq^{i}+t^{-1}a^{-1}q^{-i}$, $%
B_{i}(a,q,t)=t^{2}aq^{i}+t^{-1}a^{-1}q^{-i}$ and $\{p\}=q^{p}-q^{-p}$.
\end{conjecture}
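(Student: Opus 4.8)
The statement bundles together two claims: the existence of the integer shift $\alpha(\mathcal{K})$, and the existence of coefficient functions $H_k$ that are simultaneously independent of $N$ and integral. The natural plan is to argue inductively in $N$, with the base case $N=1$ carrying essentially all of the structural content. For $N=1$ the right-hand sum collapses to the single term $1+H_1\,A_{-1}(a,q,t)\,B_1(a,q,t)$, so the conjecture becomes the divisibility statement that
$(-t)^{\alpha(\mathcal{K})}\mathcal{P}_1(\mathcal{K};a,q,t)-1$
is divisible in $\mathbb{Z}[a^{\pm1},q^{\pm1},t^{\pm1}]$ by the product $(aq^{-1}+t^{-1}a^{-1}q)(t^2aq+t^{-1}a^{-1}q^{-1})$, together with the requirement that $\alpha$ be an integer. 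I would first fix $\alpha$ as the homological grading shift that normalizes the constant term to $1$, and then attack the divisibility; for the classes I can actually reach, this shift equals $-\sigma(\mathcal{K})/2$ for quasi-alternating knots and $-(m-1)(n-1)/2$ for $T(m,n)$.

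Divisibility is a vanishing statement along two loci. After clearing $a^{-1}$, the factor $aq^{-1}+t^{-1}a^{-1}q$ vanishes exactly when $a^2=-t^{-1}q^2$, and $t^2aq+t^{-1}a^{-1}q^{-1}$ vanishes when $a^2=-t^{-3}q^{-2}$, so it suffices to show that $(-t)^{\alpha}\mathcal{P}_1-1$ dies under each of these two substitutions. At $t=-1$ the loci degenerate to $a=\pm q$ and $a=\pm q^{-1}$, where $\mathcal{P}_1$ reduces to the HOMFLY-PT polynomial and the reduced invariant trivializes, so the vanishing there is exactly the $N=1$ instance of Habiro's cyclotomic expansion \cite{Hab}; the remaining task is to promote that classical vanishing to general $t$.

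For the two reachable classes I would argue by explicit computation. For a quasi-alternating knot the reduced HOMFLY-PT homology is thin: supported on a single $\delta$-diagonal, hence completely determined by the HOMFLY-PT polynomial and the signature, in the spirit of the Ozsv\'{a}th--Szab\'{o} theorem quoted above. This produces a closed expression for $\mathcal{P}_1$ in terms of $P(\mathcal{K};a,q)$ and $\sigma(\mathcal{K})$, into which the two substitutions can be fed directly both to verify the vanishings and to read off $\alpha=-\sigma/2$. For torus knots I would instead feed the explicit formulas of \cite{DMMSS} into the same two substitutions; the coprimality $(m,n)=1$ and the resulting symmetry of the formula should yield the divisibility and the value $\alpha=-(m-1)(n-1)/2$.

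The main obstacle is the passage to $N\ge 2$, where no closed form for $\mathcal{P}_N$ is available in general. Here the plan is to leverage the congruent relations of Conjecture 2.1 (equivalently 1.2), since in the non-categorified setting congruence relations are known to be equivalent to a Habiro-type cyclotomic expansion \cite{CLPZ, CLZ}: one sets up a recursion expressing $\mathcal{P}_N$ through the $H_k$ with $k\le N$ and solves for $H_N$. The genuinely hard points are then (i) establishing the categorified congruence relations themselves, and (ii) proving that the resulting $H_k$ are independent of $N$ and lie in $\mathbb{Z}[a^{\pm1},q^{\pm1},t^{\pm1}]$ rather than merely in its fraction field, i.e.\ that the $\{i\}$ in the denominators cancel. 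This integrality together with $N$-independence seems to require either a uniform representation-theoretic model for colored HOMFLY-PT homology or a universal recursion in the colour $N$, neither of which is presently available; I would therefore expect complete proofs only for $N=1$ and for those knots whose colored superpolynomials are known in closed form.
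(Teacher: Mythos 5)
This statement is a conjecture in the paper, and your proposal correctly treats it as such: the only parts the paper actually establishes are the $N=1$ cases for quasi-alternating knots and for torus knots, and your strategy for both --- reduce the $N=1$ claim to divisibility of $(-t)^{\alpha}\mathcal{P}_{1}-1$ by $(aq^{-1}+t^{-1}a^{-1}q)(t^{2}aq+t^{-1}a^{-1}q^{-1})$ and verify it by checking vanishing at the two loci $a^{2}=-q^{2}t^{-1}$ and $a^{2}=-q^{-2}t^{-3}$, using thinness together with the HOMFLY-PT skein factorization for quasi-alternating knots and the explicit formulas of \cite{DMMSS} for torus knots --- is exactly the paper's. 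The one ingredient you leave implicit is the mechanism that makes the torus-knot computation close: the paper observes that under each substitution every $M_{Q}^{\ast}$ vanishes except $M_{(m)}^{\ast}$ (respectively $M_{(1^{m})}^{\ast}$), and then runs a Euclidean-algorithm induction on the coprime pair via $\mathcal{P}_{1}(T(m,p);a,q,t)=\mathcal{P}_{1}(T(p,m);a,q,t)$, which is what actually forces the two evaluations to equal $1$ and yields $\alpha(T(m,n))=-(m-1)(n-1)/2$.
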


\begin{remark}
The above Conjecture-Definition for invariant $\alpha (\mathcal{K})$ should be
understood in this way. If the above conjecture of a knot $\mathcal{K}$ is true for $N=1$,
then $\alpha (\mathcal{K})$ is defined. The next level of the conjecture is for $N\geq
2$ by using the same $\alpha (\mathcal{K})$. In this way, $\alpha (\mathcal{K})$ is defined even
if the conjecture is only true for $N=1$.
\end{remark}

\begin{remark}
$H_{k}(\mathcal{K};a,q,t)$ is independent of $N$, which only depends
on knot $\mathcal{K}$ and integer $k$. Because of the case of torus knot $%
T(2,5)$ etc, we can not make the conjecture to take $\underset{i=1}{\overset{%
k}{\prod }}\left( \frac{\{N+1-i\}}{\{i\}}A_{i-2}(a,q,t)B_{N+i-1}(a,q,t)%
\right) $ as the expansion basis, which is a tricky part of this conjecture.
\end{remark}

\begin{remark}
There is another cyclotomic expansion formulation of quadruply-graded
homology for 2-bridge knots and torus knots obtained in \cite{NO}, and an extension to link case in \cite{GNSSS}.
\end{remark}

\bigskip

For instance, we have the following expansion for $N=1$ and $2$.%
\begin{equation}
(-t)^{\alpha (\mathcal{K})}\mathcal{P}_{1}(\mathcal{K};a,q,t)=1+H_{1}(%
\mathcal{K};a,q,t)(aq^{-1}+t^{-1}a^{-1}q^{1})(t^{2}aq+t^{-1}a^{-1}q^{-1})
\end{equation}

and

\begin{eqnarray}
(-t)^{2\alpha (\mathcal{K})}\mathcal{P}_{2}(\mathcal{K};a,q,t) &=&1+H_{1}(%
\mathcal{K}%
;a,q,t)(aq^{-1}+t^{-1}a^{-1}q^{1})(q+q^{-1})(t^{2}aq^{2}+t^{-1}a^{-1}q^{-2})
\notag \\
&&+H_{2}(\mathcal{K}%
;a,q,t)(aq^{-1}+t^{-1}a^{-1}q^{1})(t^{2}aq^{2}+t^{-1}a^{-1}q^{-2})(t^{2}aq^{3}+t^{-1}a^{-1}q^{-3})
\end{eqnarray}

\bigskip

We have the following theorem for quasi-alternating knots.

\begin{theorem}
The Conjecture 1.3 (Conj. 2.3) of case $N=1$ is true for any quasi-alternating knot $%
\mathcal{K}$. Furthermore, we have
\begin{equation}
\alpha (\mathcal{K})=-\frac{\sigma (\mathcal{K})}{2}
\end{equation}

and the following expansion
\begin{equation}
(-t)^{-\frac{\sigma (\mathcal{K})}{2}}\mathcal{P}_{1}(\mathcal{K}%
;a,q,t)=1+(aq^{-1}+t^{-1}a^{-1}q)(t^{2}aq+t^{-1}a^{-1}q^{-1})H_{1}(\mathcal{K%
};a,q,t).
\end{equation}
\end{theorem}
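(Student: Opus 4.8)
The plan is to reduce the $N=1$ statement to a divisibility assertion in a UFD and then prove it by lifting the vanishing of the $sl(\pm1)$ specializations of the reduced HOMFLY-PT polynomial to the triply-graded setting, using that quasi-alternating knots are homologically thin. For $N=1$ the asserted expansion is equivalent to
\[
(-t)^{-\sigma/2}\mathcal{P}_1(\mathcal{K};a,q,t)-1 \;\in\; \bigl(A_{-1}(a,q,t)\,B_1(a,q,t)\bigr)\cdot\mathbb{Z}[a^{\pm1},q^{\pm1},t^{\pm1}],
\]
where $B_1=t^2aq+t^{-1}a^{-1}q^{-1}$, together with the identification $\alpha(\mathcal{K})=-\sigma(\mathcal{K})/2$. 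Since $\mathbb{Z}[a^{\pm1},q^{\pm1},t^{\pm1}]$ is a UFD and, up to units, $A_{-1}=a^{-1}q^{-1}(a^2+t^{-1}q^2)$ and $B_1=a^{-1}q^{-1}t^{-1}(t^3a^2q^2+1)$ are two coprime irreducibles, the integrality of $H_1$ is automatic: it suffices to show that $(-t)^{-\sigma/2}\mathcal{P}_1-1$ lies in each of the prime ideals $(A_{-1})$ and $(B_1)$. Evaluating at $t=-1$ first as a guide, these two conditions become $P(\mathcal{K};q,q)=1$ and $P(\mathcal{K};q^{-1},q)=1$, i.e. the triviality of the $sl(1)$ and $sl(-1)$ specializations of the reduced HOMFLY-PT polynomial, both of which hold for every knot (the second via the $q\mapsto q^{-1}$ symmetry of $\bar P$). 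Thus the $t=-1$ shadow of the theorem is free, and the whole content is to lift it.

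Next I would use thinness. Quasi-alternating knots are Khovanov- and $sl(N)$-thin by Manolescu--Ozsv\'ath, and correspondingly their reduced triply-graded HOMFLY-PT homology is supported on a single $\delta$-diagonal, determined by $P(\mathcal{K};a,q)$ together with $\sigma(\mathcal{K})$; the diagonal is pinned by Ozsv\'ath--Szab\'o's Theorem 1.1, which places $\widehat{HFK}$ on $i=s+\sigma/2$. Consequently $\mathcal{P}_1(\mathcal{K};a,q,t)$ is the canonical $t$-graded lift of $P$: each monomial $\lambda\,a^iq^j$ of $P$ lifts to a single generator $a^iq^jt^{k(i,j)}$ with $k$ linear in $(i,j)$ and $\lambda=(-1)^{k(i,j)}$. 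I would then check that the factor $(-t)^{-\sigma/2}$ is precisely what renders the minimal generator the constant $1$, which forces $\alpha(\mathcal{K})=-\sigma(\mathcal{K})/2$, and that under this lift the two factors $A_{-1}$ and $B_1$ are themselves the canonical lifts (up to units) of $(a^2-q^2)$ and $(a^2-q^{-2})$ respectively.

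The crux, and what I expect to be the main obstacle, is the $t$-graded divisibility itself, namely proving
\[
\mathcal{P}_1(\mathcal{K};a,q,t)\equiv (-t)^{\sigma/2}\ \ \pmod{A_{-1}}
\qquad\text{and}\qquad
\mathcal{P}_1(\mathcal{K};a,q,t)\equiv (-t)^{\sigma/2}\ \ \pmod{B_1},
\]
which are the $t$-deformed $sl(\pm1)$ specializations $a^2=-t^{-1}q^2$ and $a^2=-t^{-3}q^{-2}$ (note $\mathcal{P}_1$ involves only even powers of $a$, so these substitutions are well defined). Conceptually these encode that the DGR canceling differentials $d_{\pm1}$ on the reduced HOMFLY homology have one-dimensional homology; for a thin knot these differentials act transparently, so I would verify the two specializations directly from the explicit diagonal lift, collapsing the sum to a single monomial by feeding in $P(\mathcal{K};q^{\pm1},q)=1$ and using the single-valuedness of the $t$-grading guaranteed by thinness. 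The delicate point is keeping the $t$-powers consistent across the whole diagonal so that the $t$-deformed cancellation mirrors the numerical cancellation at $t=-1$; the figure-eight computation, where both substitutions return exactly $1=(-t)^{0}$, shows the mechanism in miniature. Once both prime-ideal memberships are in hand, unique factorization yields $H_1\in\mathbb{Z}[a^{\pm1},q^{\pm1},t^{\pm1}]$ and the stated expansion, completing the proof.
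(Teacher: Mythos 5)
Your proposal is sound and reaches the theorem, but it organizes the argument differently from the paper. The paper starts from the classical skein-theoretic factorization $P(\mathcal{K};a,q)=1+(aq^{-1}-a^{-1}q)(aq-a^{-1}q^{-1})f(\mathcal{K};a,q)$ with the refined integrality $f\in\mathbb{Z}[a^{\pm 1},(q-q^{-1})^{2}]$, and then uses thinness of quasi-alternating knots (Manolescu--Ozsv\'ath together with Sec.\ 5.2 of Dunfield--Gukov--Rasmussen) to realize $(-t)^{-\sigma/2}\mathcal{P}_{1}$ as the monomial substitution $a\mapsto at$, $q\mapsto \sqrt{-1}\,qt^{1/2}$ applied to $P$; under this substitution the two classical factors become exactly $A_{-1}$ and $B_{1}$ (the factors of $\sqrt{-1}$ and $t^{1/2}$ cancel in the product), and $H_{1}=f(\mathcal{K};at,\sqrt{-1}qt^{1/2})$, with the subring condition on $f$ guaranteeing that no half-integral powers of $t$ or imaginary coefficients survive. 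You instead prove membership of $(-t)^{-\sigma/2}\mathcal{P}_{1}-1$ in the two prime ideals $(A_{-1})$ and $(B_{1})$ and invoke unique factorization; note that your two $t$-deformed specializations $a^{2}=-t^{-1}q^{2}$ and $a^{2}=-t^{-3}q^{-2}$ are precisely the images of the classical specializations $\tilde a^{2}=\tilde q^{\pm 2}$ under the same substitution, so the verification you flag as the delicate point is exactly the diagonal-lift identity $(-t)^{-\sigma/2}\mathcal{P}_{1}(\mathcal{K};a,q,t)=P(\mathcal{K};at,\sqrt{-1}qt^{1/2})$ that the paper uses wholesale. What your route buys is that integrality of $H_{1}$ comes for free from the UFD structure, with no need for the refined subring statement about $f$, and it also makes transparent why both specializations must return the same power $(-t)^{\sigma/2}$ (which pins down $\alpha=-\sigma/2$). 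What the paper's route buys is that the $t$-power bookkeeping across the diagonal --- your acknowledged obstacle --- is handled once and for all by the substitution, and it produces $H_{1}$ explicitly. Both arguments ultimately rest on the same two inputs: thinness pinning $\mathcal{P}_{1}$ to a single $\delta$-diagonal determined by $P$ and $\sigma$, and the triviality of the $sl(\pm 1)$ specializations of the reduced HOMFLY-PT polynomial.
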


\begin{proof}
By using the skein relation for classical HOMFLY-PT polynomial $P(\mathcal{K}%
;a,q)$, we have%
\begin{equation}
P(\mathcal{K};a,q)=1+(aq^{-1}-a^{-1}q)(aq-a^{-1}q^{-1})f(\mathcal{K};a,q)
\end{equation}

for some function $f(\mathcal{K};a,q)\in
\mathbb{Z}
\lbrack a^{\pm 1},(q-q^{-1})^{2}].$

Now combined with Theorem 1.1, arguments for quasi-alternating knot in \cite%
{MO} and discussion in Sec. 5.2 in \cite{DGR}, we could easily get the
following expansion%
\begin{equation}
(-t)^{-\frac{\sigma (\mathcal{K})}{2}}\mathcal{P}_{1}(\mathcal{K}%
;a,q,t)=1+(aq^{-1}+t^{-1}a^{-1}q)(t^{2}aq+t^{-1}a^{-1}q^{-1})f(\mathcal{K}%
_{1};at,\sqrt{-1}qt^{\frac{1}{2}}).
\end{equation}

with $H_{1}(\mathcal{K};a,q,t)=f(\mathcal{K};at,\sqrt{-1}qt^{\frac{1}{2}%
})\in
\mathbb{Z}
\lbrack a^{\pm 1},q^{\pm 1},t^{\pm 1}]$.
\end{proof}

\bigskip

Torus knots and torus links are studied completely in \cite{DMMSS}. Based on
these highly nontrivial computations, we are able to prove the following
theorem for all torus knots.

\begin{theorem}
For any coprime pair $(m,n)=1$, where $m<n$, cyclotomic expansion conjecture
(Conj. 1.3 or Conj. 2.3) of $N=1$ is true for torus knot $T(m,n)$ and we
have $\alpha (T(m,n))=-(m-1)(n-1)/2$.
\end{theorem}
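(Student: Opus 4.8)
The plan is to start from the explicit closed formula for the uncolored reduced superpolynomial $\mathcal{P}_1(T(m,n);a,q,t)$ computed in \cite{DMMSS} and to show, by a direct divisibility argument, that
\[
(-t)^{\alpha}\mathcal{P}_1(T(m,n);a,q,t)\equiv 1 \pmod{A_{-1}(a,q,t)\,B_1(a,q,t)},\qquad \alpha=-\tfrac{(m-1)(n-1)}{2},
\]
in $\mathbb{Z}[a^{\pm1},q^{\pm1},t^{\pm1}]$, where $A_{-1}=aq^{-1}+t^{-1}a^{-1}q$ and $B_1=t^2aq+t^{-1}a^{-1}q^{-1}$. At $N=1$ the cyclotomic basis degenerates to the single product $A_{-1}B_1$, since $\prod_{i=1}^{1}\frac{\{N+1-i\}}{\{i\}}B_{N+i-1}=B_1$; hence this congruence is exactly the assertion of the theorem, with $H_1(T(m,n);a,q,t)$ defined as the quotient.

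First I would observe that $A_{-1}$ and $B_1$ are coprime in the Laurent ring (after clearing units they become $a^2+t^{-1}q^2$ and $a^2+t^{-3}q^{-2}$, which share no common factor), so it is enough to prove the two congruences modulo $A_{-1}$ and modulo $B_1$ separately. These two factors are moreover interchanged by the substitution $(a,q)\mapsto(at,q^{-1})$: one checks directly that $B_1(a,q,t)=t\,A_{-1}(at,q^{-1},t)$. Because the reduced superpolynomial of a knot carries the $q\leftrightarrow q^{-1}$ self-symmetry that pairs the two canceling differentials $d_{1}$ and $d_{-1}$ of \cite{DGR}, the congruence modulo $B_1$ will follow from the congruence modulo $A_{-1}$ after applying this substitution. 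The genuine content is therefore the single statement $(-t)^{\alpha}\mathcal{P}_1\equiv1\pmod{A_{-1}}$.

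To establish this I would substitute the defining relation $a^2=-t^{-1}q^2$ of the hypersurface $\{A_{-1}=0\}$ into the DMMSS formula and show that the resulting finite sum collapses to the single monomial $(-t)^{-\alpha}$. Conceptually, $A_{-1}$ is (up to a unit) the Koszul factor of the canceling differential that reduces $T(m,n)$ to the unknot, so on this locus the whole homology should cancel down to its one surviving generator; the exponent $\alpha=-\frac{(m-1)(n-1)}{2}$ is then forced by demanding that the surviving monomial be exactly $1$, i.e. by matching the minimal $(a,q,t)$-grading carried by the DMMSS expression, which matches the Seifert-genus shift $(m-1)(n-1)$ in the $t$-variable. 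As a built-in check, setting $t=-1$ turns $A_{-1}$ and $B_1$ into $aq^{-1}-a^{-1}q$ and $aq-a^{-1}q^{-1}$, and the congruence specializes to the classical skein factorization of $P(T(m,n);a,q)-1$ used in the quasi-alternating case; the value of $\alpha$ is consistent with the quasi-alternating result on the alternating subfamily $T(2,n)$.

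The main obstacle is the collapse asserted in the previous step. The DMMSS formula is a nontrivial multiple sum over the combinatorial data governing the torus-knot evolution, and proving that its restriction to $\{A_{-1}=0\}$ telescopes to one monomial requires actual $q$-binomial identities rather than a formal manipulation; organizing these cancellations uniformly in $(m,n)$ is where the real work lies. A secondary issue demanding care is integrality: one must verify that $H_1=\big((-t)^{\alpha}\mathcal{P}_1-1\big)/(A_{-1}B_1)$ lands in $\mathbb{Z}[a^{\pm1},q^{\pm1},t^{\pm1}]$ and is not merely rational, i.e. that each of the two coprime factors divides the numerator with multiplicity one and with integer quotient.
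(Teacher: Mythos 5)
Your reduction of the theorem to two specializations is the same as the paper's: since at $N=1$ the cyclotomic basis degenerates to the single product $A_{-1}B_1$, and $A_{-1}$, $B_1$ are (up to units) the coprime irreducibles $a^2+t^{-1}q^2$ and $a^2+t^{-3}q^{-2}$, it suffices to check that $(-t)^{\alpha}\mathcal{P}_1(T(m,n);a,q,t)$ equals $1$ on each of the loci $a^2=-q^2t^{-1}$ and $a^2=-q^{-2}t^{-3}$ (and integrality of the quotient is then automatic because each factor is degree one in $a^2$ with unit coefficients, so this is a minor worry). But the step you flag as ``where the real work lies'' is precisely the entire content of the paper's proof, and your guess at how it would go does not match what is actually needed. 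There is no telescoping of $q$-binomial identities. The paper uses two concrete inputs from \cite{DMMSS}: first, from the hook-content product formula for $M_Q^*$ ((41) of \cite{DMMSS}), every $M_Q^*$ with $Q\neq(m)$ carries the factor $A\widetilde t^{-1}-A^{-1}\widetilde t$ in its numerator, and every $M_Q^*$ with $Q\neq(1^m)$ carries $A\widetilde q-A^{-1}\widetilde q^{-1}$; these vanish at $a^2=-q^2t^{-1}$ and $a^2=-q^{-2}t^{-3}$ respectively, so exactly one partition ($Q=(m)$, resp.\ $Q=(1^m)$) survives each evaluation. Second, the surviving term still contains the undetermined evolution coefficient $c^{(m)}_{(1)}$ (resp.\ $c^{(1^m)}_{(1)}$), and its value is pinned down by an induction along the Euclidean algorithm: writing $n=km+p$, the evaluated expression for $T(m,km+p)$ equals an explicitly computable unit (which is $1$ on the relevant locus) times the evaluated expression for $T(m,p)=T(p,m)$, with base case the unknot $T(1,m)$. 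This induction is also what actually forces $\alpha=-(m-1)(n-1)/2$; your ``match the minimal grading'' heuristic does not substitute for it.

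A secondary gap: you propose to deduce the congruence modulo $B_1$ from the one modulo $A_{-1}$ via the substitution $(a,q)\mapsto(at,q^{-1})$, justified by a self-symmetry of the superpolynomial pairing the canceling differentials $d_1$ and $d_{-1}$ of \cite{DGR}. That symmetry is part of the conjectural axiomatics of the theory rather than an established property of the \cite{DMMSS} expressions, so it cannot be assumed here. The paper sidesteps this by running both evaluations directly; they are mirror images of each other under $(m)\leftrightarrow(1^m)$ and cost nothing extra.
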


\begin{remark}
Because \cite{DMMSS} compute the superpolynomial of the mirror of torus knot
$T(m,n)$, we take $\alpha (T(m,n))=(m-1)(n-1)/2$ instead of $-(m-1)(n-1)/2$
in the following proof.
\end{remark}

\begin{proof}
In order to prove that there is an expansion such as%
\begin{eqnarray}
&&(-t)^{(m-1)(n-1)/2}\mathcal{P}_{1}(T(m,km+p);a,q,t) \\
&=&1+H_{1}(T(m,km+p);a,q,t)(aq^{-1}+t^{-1}a^{-1}q)(t^{2}aq+t^{-1}a^{-1}q^{-1})
\notag
\end{eqnarray}
\end{proof}

\begin{theorem}
\begin{proof}
It is sufficient to prove the following two identities,%
\begin{equation}
(-t)^{(m-1)(n-1)/2}\mathcal{P}_{1}(T(m,km+p);a,q,t)|_{a^{2}=-q^{2}t^{-1}}=1
\end{equation}

and%
\begin{equation}
(-t)^{(m-1)(n-1)/2}\mathcal{P}_{1}(T(m,km+p);a,q,t)|_{a^{2}=-q^{-2}t^{-3}}=1
\end{equation}

Compared notations in this paper and in \cite{DMMSS}, there is a notation
change by multiplication of $\frac{a^{(m-1)n}}{q^{(m-1)n}}$.

By setting $n=km+p$, it is easy to know $(m,p)=1$.

The expression of the superpolynomial of triply-graded reduced non-colored
HOMFLY-PT homology of torus knot $T(m,km+p)$ is given by the following%
\begin{equation}
\mathcal{P}_{1}(T(m,km+p);a,q,t)=\frac{a^{(m-1)(km+p)}}{q^{(m-1)(km+p)}}%
\frac{\{\widetilde{t}\}A^{m-1}\widetilde{q}^{(m-1)(km+p)}}{\{A\}\widetilde{t}%
^{m-1}}P(T(m,km+p);a,q,t)
\end{equation}

where $\widetilde{t}=q$, $\widetilde{q}=-qt$, $A=a\sqrt{-t}$ ((48) of \cite%
{DMMSS}) and $\{f(a,q,t)\}=f(a,q,t)-(f(a,q,t))^{-1}$.

The identity $P(T(m,km+p);a,q,t)$ is given by ((4) and (7) of \cite{DMMSS})%
\begin{equation}
P(T(m,km+p);a,q,t)=\underset{|Q|=m}{\sum }\widetilde{q}^{-2(km+p)\nu
(Q^{\prime })/m}\widetilde{t}^{2(km+p)\nu (Q)/m}c_{(1)}^{Q}M_{Q}^{\ast },
\end{equation}

Here $M_{R}^{\ast }$ is given by the following identity ((41) of \cite{DMMSS}%
),
\begin{equation}
M_{R}^{\ast }=\underset{(i,j)\in R}{\prod }\frac{A\widetilde{q}^{j-1}/%
\widetilde{t}^{i-1}-(A\widetilde{q}^{j-1}/\widetilde{t}^{i-1})^{-1}}{%
\widetilde{q}^{k}\widetilde{t}^{l+1}-(\widetilde{q}^{k}\widetilde{t}%
^{l+1})^{-1}},  \label{M*R}
\end{equation}

where $k=R_{i}-j-1$ and $l=R_{j}^{\prime }-i-1$.

Then we immediately get the following expression for $\mathcal{P}%
_{1}(T(m,km+p);a,q,t)$%
\begin{eqnarray}
\mathcal{P}_{1}(T(m,km+p);a,q,t) &=&\frac{a^{(m-1)(km+p)}}{q^{(m-1)(km+p)}}%
\frac{\{\widetilde{t}\}A^{m-1}\widetilde{q}^{(m-1)(km+p)}}{\{A\}\widetilde{t}%
^{m-1}}  \label{P1Expansion} \\
&&\underset{|Q|=m}{\sum }\widetilde{q}^{-2(km+p)\nu (Q^{\prime })/m}%
\widetilde{t}^{2(km+p)\nu (Q)/m}c_{(1)}^{Q}M_{Q}^{\ast },  \notag
\end{eqnarray}

Although it is generally difficult to determine all the coefficients $%
c_{(1)}^{Q}$. In order to prove the expansion formula for $%
(-t)^{(m-1)(n-1)/2}\mathcal{P}_{1}(T(m,km+p);a,q,t)$, actually we don't need
to do so. Many terms of $M_{Q}^{\ast }$ will disappear after they are
evaluated at $a^{2}=-q^{2}t^{-1}$ or $a^{2}=-q^{-2}t^{-3}$.

According to (\ref{M*R}) ((41) of \cite{DMMSS}), we get to know the fact
that $M_{Q}^{\ast }$ contain $A\widetilde{t}^{-1}-A^{-1}\widetilde{t}$ in
the numerator except for $Q=(m)$ and $M_{Q}^{\ast }$ contain $A\widetilde{q}%
-A^{-1}\widetilde{q}^{-1}$ in the numerator except for $Q=(1^{m})$.

In fact, we have the following identities%
\begin{eqnarray}
&&\left( A\widetilde{t}^{-1}-A^{-1}\widetilde{t}\right)
|_{a^{2}=-q^{2}t^{-1}}  \notag \\
&=&\left( A^{-1}(A^{2}q^{-1}-q)\right) |_{a^{2}=-q^{2}t^{-1}}  \notag \\
&=&\left( A^{-1}(-ta^{2}q^{-1}-q)\right) |_{a^{2}=-q^{2}t^{-1}}  \notag \\
&=&0,
\end{eqnarray}

and%
\begin{eqnarray}
&&\left( A\widetilde{q}-A^{-1}\widetilde{q}^{-1}\right)
|_{a^{2}=-q^{-2}t^{-3}}  \notag \\
&=&\left( A^{-1}(-A^{2}qt+q^{-1}t^{-1})\right) |_{a^{2}=-q^{-2}t^{-3}}
\notag \\
&=&\left( A^{-1}(-ta^{2}qt+q^{-1}t^{-1})\right) |_{a^{2}=-q^{-2}t^{-3}}
\notag \\
&=&0,
\end{eqnarray}

In fact, only $M_{(m)}^{\ast }$ survived after they and evaluated at $%
a^{2}=-q^{2}t^{-1}$ and only $M_{(1^{m})}^{\ast }$ survived after they and
evaluated at $a^{2}=-q^{-2}t^{-3}$.

Thus we immediately obtain the following expression for $\mathcal{P}%
_{1}(T(m,km+p);a,q,t)$ evaluated at $a^{2}=-q^{2}t^{-1}$ and $%
a^{2}=-q^{-2}t^{-3}$ from (\ref{P1Expansion}),%
\begin{eqnarray}
&&\left( \mathcal{P}_{1}(T(m,km+p);a,q,t)\right) |_{a^{2}=-q^{2}t^{-1}}
\notag \\
&=&(\frac{a^{(m-1)(km+p)}}{q^{(m-1)(km+p)}}\frac{\{\widetilde{t}\}A^{m-1}%
\widetilde{q}^{(m-1)(km+p)}}{\{A\}\widetilde{t}^{m-1}}  \notag \\
&&\widetilde{q}^{-2(km+p)\nu (1^{m})/m}\widetilde{t}^{2(km+p)\nu
(m)/m}c_{(1)}^{(m)}M_{(m)}^{\ast })|_{a^{2}=-q^{2}t^{-1}}  \notag \\
&=&(\frac{a^{(m-1)(km+p)}}{q^{(m-1)(km+p)}}\frac{\{\widetilde{t}\}A^{m-1}%
\widetilde{q}^{(m-1)(km+p)}}{\{A\}\widetilde{t}^{m-1}}\widetilde{q}%
^{-(km+p)(m-1)}c_{(1)}^{(m)}M_{(m)}^{\ast })|_{a^{2}=-q^{2}t^{-1}}  \notag \\
&=&(\frac{a^{(m-1)(km+p)}}{q^{(m-1)(km+p)}}\frac{\{\widetilde{t}\}A^{m-1}}{%
\{A\}\widetilde{t}^{m-1}}c_{(1)}^{(m)}M_{(m)}^{\ast })|_{a^{2}=-q^{2}t^{-1}}
\label{1stEvaluation}
\end{eqnarray}

and

\begin{eqnarray}
&&\left( \mathcal{P}_{1}(T(m,km+p);a,q,t)\right) |_{a^{2}=-q^{-2}t^{-3}}
\notag \\
&=&(\frac{a^{(m-1)(km+p)}}{q^{(m-1)(km+p)}}\frac{\{\widetilde{t}\}A^{m-1}%
\widetilde{q}^{(m-1)(km+p)}}{\{A\}\widetilde{t}^{m-1}}  \notag \\
&&\widetilde{q}^{-2(km+p)\nu (m)/m}\widetilde{t}^{2(km+p)\nu
(1^{m})/m}c_{(1)}^{(1^{m})}M_{(1^{m})}^{\ast })|_{a^{2}=-q^{-2}t^{-3}} \\
&=&(\frac{a^{(m-1)(km+p)}}{q^{(m-1)(km+p)}}\frac{\{\widetilde{t}\}A^{m-1}%
\widetilde{q}^{(m-1)(km+p)}}{\{A\}\widetilde{t}^{m-1}}\widetilde{t}%
^{(km+p)(m-1)}c_{(1)}^{(1^{m})}M_{(1^{m})}^{\ast })|_{a^{2}=-q^{-2}t^{-3}}
\notag \\
&=&(\frac{a^{(m-1)(km+p)}}{q^{(m-1)(km+p)}}\frac{\{\widetilde{t}\}A^{m-1}%
\widetilde{q}^{(m-1)(km+p)}}{\{A\}}\widetilde{t}%
^{(km+p-1)(m-1)}c_{(1)}^{(1^{m})}M_{(1^{m})}^{\ast })|_{a^{2}=-q^{-2}t^{-3}},
\label{2ndEvaluation}
\end{eqnarray}

where we used $\nu ((m))=0$ and $\nu ((1^{m}))=(m-1)m/2$.

In \cite{DMMSS}, there is a trick to determine the coefficients $c_{(1)}^{Q}$
that one use (1) of \cite{DMMSS} in the following sense,%
\begin{equation}
\mathcal{P}_{1}(T(m,p);a,q,t)=\mathcal{P}_{1}(T(p,m);a,q,t)\text{ with }p<m.
\label{T(m,p)=T(p,m)}
\end{equation}

By induction method, we can assume the following
\begin{equation}
(-t)^{(p-1)(m-1)/2}\mathcal{P}_{1}(T(p,m);a,q,t)|_{a^{2}=-q^{2}t^{-1}}=1
\label{1stT(p,m)}
\end{equation}

and%
\begin{equation}
(-t)^{(p-1)(m-1)/2}\mathcal{P}_{1}(T(p,m);a,q,t)|_{a^{2}=-q^{-2}t^{-3}}=1
\label{2ndT(p,m)}
\end{equation}

We need to prove the following%
\begin{equation}
(-t)^{(m-1)(km+p-1)/2}\mathcal{P}%
_{1}(T(m,km+p);a,q,t)|_{a^{2}=-q^{2}t^{-1}}=1  \label{1stT(m,km+p)}
\end{equation}

and%
\begin{equation}
(-t)^{(m-1)(km+p-1)/2}\mathcal{P}%
_{1}(T(m,km+p);a,q,t)|_{a^{2}=-q^{-2}t^{-3}}=1  \label{2ndT(m,km+p)}
\end{equation}

Conbined with (\ref{1stEvaluation}) and (\ref{2ndEvaluation}), we can
immediately check the initial case for $p=1$ as follows%
\begin{eqnarray}
&&(-t)^{(1-1)(m-1)/2}\left( \mathcal{P}_{1}(T(1,m);a,q,t)\right)
|_{a^{2}=-q^{2}t^{-1}}  \notag \\
&=&\left( \frac{\{\widetilde{t}\}}{\{A\}}c_{(1)}^{(1)}M_{(1)}^{\ast }\right)
|_{a^{2}=-q^{2}t^{-1}}  \notag \\
&=&\left( \frac{\{\widetilde{t}\}}{\{A\}}\frac{A-A^{-1}}{\widetilde{t}-%
\widetilde{t}^{-1}}\right) |_{a^{2}=-q^{2}t^{-1}}  \notag \\
&=&1
\end{eqnarray}

and%
\begin{eqnarray}
&&(-t)^{(1-1)(m-1)/2}\left( \mathcal{P}_{1}(T(1,m);a,q,t)\right)
|_{a^{2}=-q^{-2}t^{-3}}  \notag \\
&=&\left( \frac{\{\widetilde{t}\}}{\{A\}}c_{(1)}^{(1)}M_{(1)}^{\ast }\right)
|_{a^{2}=-q^{-2}t^{-3}}  \notag \\
&=&\left( \frac{\{\widetilde{t}\}}{\{A\}}\frac{A-A^{-1}}{\widetilde{t}-%
\widetilde{t}^{-1}}\right) |_{a^{2}=-q^{-2}t^{-3}}  \notag \\
&=&1
\end{eqnarray}

From (\ref{1stEvaluation}), (\ref{T(m,p)=T(p,m)}) and (\ref{1stT(p,m)}), we
have%
\begin{eqnarray*}
&&(-t)^{(m-1)(km+p-1)/2}\mathcal{P}%
_{1}(T(m,km+p);a,q,t)|_{a^{2}=-q^{2}t^{-1}} \\
&=&(-t)^{(m-1)(km+p-1)/2}\frac{a^{(m-1)(km+p)}}{q^{(m-1)(km+p)}}\frac{\{%
\widetilde{t}\}A^{m-1}}{\{A\}\widetilde{t}^{m-1}}c_{(1)}^{(m)}M_{(m)}^{\ast
}|_{a^{2}=-q^{2}t^{-1}} \\
&=&\left( (-t)^{(m-1)km/2}\frac{a^{(m-1)km}}{q^{(m-1)km}}\right)
|_{a^{2}=-q^{2}t^{-1}}\left( (-t)^{(m-1)(p-1)/2}\mathcal{P}%
_{1}(T(m,p);a,q,t)|_{a^{2}=-q^{2}t^{-1}}\right)  \\
&=&1\cdot \left( (-t)^{(m-1)(p-1)/2}\mathcal{P}%
_{1}(T(p,m);a,q,t)|_{a^{2}=-q^{2}t^{-1}}\right)  \\
&=&1,
\end{eqnarray*}

which is just (\ref{1stT(m,km+p)}).

Similarly, from (\ref{2ndEvaluation}), (\ref{T(m,p)=T(p,m)}) and (\ref%
{2ndT(p,m)}), we have%
\begin{eqnarray*}
&&(-t)^{(m-1)(km+p-1)/2}\mathcal{P}%
_{1}(T(m,km+p);a,q,t)|_{a^{2}=-q^{-2}t^{-3}} \\
&=&(-t)^{(m-1)(km+p-1)/2}\frac{a^{(m-1)(km+p)}}{q^{(m-1)(km+p)}}\frac{\{%
\widetilde{t}\}A^{m-1}\widetilde{q}^{(m-1)(km+p)}}{\{A\}}\widetilde{t}%
^{(km+p-1)(m-1)}c_{(1)}^{(1^{m})}M_{(1^{m})}^{\ast }|_{a^{2}=-q^{-2}t^{-3}}
\\
&=&\left( (-t)^{(m-1)km/2}\frac{a^{(m-1)km}}{q^{(m-1)km}}\widetilde{q}%
^{(m-1)km}\widetilde{t}^{km(m-1)}\right) |_{a^{2}=-q^{-2}t^{-3}} \\
&&\left( (-t)^{(m-1)(p-1)/2}\mathcal{P}%
_{1}(T(m,p);a,q,t)|_{a^{2}=-q^{-2}t^{-3}}\right)  \\
&=&\left( (-t)^{(m-1)km/2}\frac{a^{(m-1)km}}{q^{(m-1)km}}%
(-qt)^{(m-1)km}q^{km(m-1)}\right) |_{a^{2}=-q^{-2}t^{-3}} \\
&&\left( (-t)^{(m-1)(p-1)/2}\mathcal{P}%
_{1}(T(p,m);a,q,t)|_{a^{2}=-q^{-2}t^{-3}}\right)  \\
&=&\left( (-t)^{3(m-1)km/2}(aq)^{(m-1)km}\right) |_{a^{2}=-q^{-2}t^{-3}} \\
&=&1,
\end{eqnarray*}

which is just (\ref{2ndT(m,km+p)}).

Thus we complete our proof.
\end{proof}
\end{theorem}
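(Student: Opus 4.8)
The plan is to recast the $N=1$ cyclotomic expansion as a divisibility statement. After absorbing the mirror convention (so that, following the remark, we work with the exponent $(m-1)(n-1)/2$), the claim is that
\[
(-t)^{(m-1)(n-1)/2}\,\mathcal{P}_1(T(m,n);a,q,t)-1
\]
is divisible, inside the Laurent ring $\mathbb{Z}[a^{\pm1},q^{\pm1},t^{\pm1}]$, by the product $(aq^{-1}+t^{-1}a^{-1}q)(t^{2}aq+t^{-1}a^{-1}q^{-1})$. These two factors are non-associate irreducibles: up to units they cut out the distinct hypersurfaces $a^{2}=-q^{2}t^{-1}$ and $a^{2}=-q^{-2}t^{-3}$. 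Since the Laurent ring is a localization of a polynomial ring and hence a UFD, divisibility by the product is equivalent to vanishing of the displayed difference on each hypersurface separately. I would therefore reduce the entire theorem to the two scalar identities
\[
(-t)^{(m-1)(n-1)/2}\mathcal{P}_1(T(m,n);a,q,t)\big|_{a^{2}=-q^{2}t^{-1}}=1,\qquad
(-t)^{(m-1)(n-1)/2}\mathcal{P}_1(T(m,n);a,q,t)\big|_{a^{2}=-q^{-2}t^{-3}}=1;
\]
integrality of $H_1$ then comes for free from the explicit quotient.

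Next I would feed in the closed form of the torus-knot superpolynomial from \cite{DMMSS}, which writes $\mathcal{P}_1(T(m,km+p))$ as a sum over partitions $Q$ of $m$ of terms $c_{(1)}^{Q}M_Q^{\ast}$ weighted by monomials in $\widetilde q=-qt$ and $\widetilde t=q$. The decisive structural observation is that the product formula for $M_Q^{\ast}$ carries, among the numerator factors coming from the cells of $Q$, the cell-$(2,1)$ contribution $A\widetilde t^{-1}-A^{-1}\widetilde t$ whenever $Q$ has at least two rows, and the cell-$(1,2)$ contribution $A\widetilde q-A^{-1}\widetilde q^{-1}$ whenever $Q$ has at least two columns. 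Because $A=a\sqrt{-t}$, the first of these vanishes exactly at $a^{2}=-q^{2}t^{-1}$ and the second exactly at $a^{2}=-q^{-2}t^{-3}$. Hence at the first specialization every $Q\neq(m)$ dies, and at the second every $Q\neq(1^{m})$ dies, collapsing each sum to a single term. This is precisely what renders the otherwise intractable coefficients $c_{(1)}^{Q}$ irrelevant: I only ever need $c_{(1)}^{(m)}$ and $c_{(1)}^{(1^{m})}$.

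The remaining work is an induction driven by the Euclidean descent $T(m,km+p)=T(m,p)=T(p,m)$ with $0<p<m$ and $\gcd(m,p)=1$. I would take as base case the unknot $T(1,m)$, for which $\alpha=0$ and the single surviving term $\tfrac{\{\widetilde t\}}{\{A\}}M_{(1)}^{\ast}$ telescopes to $1$ directly. For the inductive step I would substitute $\nu((m))=0$ and $\nu((1^{m}))=m(m-1)/2$ into the monomial weights, so that the $\widetilde q$- and $\widetilde t$-powers depending on $k$ separate cleanly, and write the surviving term for $T(m,km+p)$ as an explicit monomial times the surviving term for $T(m,p)$. The induction hypothesis turns the latter into $1$ after its own $(-t)^{(m-1)(p-1)/2}$ normalization, and the point is then to verify that the leftover $k$-dependent monomial prefactor evaluates to $1$ at each of the two loci.

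I expect the main obstacle to be exactly this last bookkeeping. Concretely one must confirm that $(-t)^{(m-1)km/2}(a/q)^{(m-1)km}$ becomes $1$ when $a^{2}=-q^{2}t^{-1}$, and that the analogous correction $(-t)^{3(m-1)km/2}(aq)^{(m-1)km}$ becomes $1$ when $a^{2}=-q^{-2}t^{-3}$; both reduce to checking that an even power of $a$ matches the prescribed value of $a^{2}$, which is where the precise exponent $(m-1)(n-1)/2$ and the parity of $(m-1)km$ must line up. Once the two specialized identities hold for all coprime pairs, the divisibility — and with it the integrality of $H_1$ together with the value $\alpha(T(m,n))=-(m-1)(n-1)/2$ after restoring the mirror sign — follows from the reduction of the first paragraph.
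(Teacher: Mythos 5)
Your proposal follows essentially the same route as the paper: reduce the expansion to the two specializations $a^{2}=-q^{2}t^{-1}$ and $a^{2}=-q^{-2}t^{-3}$, use the DMMSS product formula so that all partitions except $(m)$ (respectively $(1^{m})$) are killed by the numerator factors $A\widetilde t^{-1}-A^{-1}\widetilde t$ and $A\widetilde q-A^{-1}\widetilde q^{-1}$, and then induct along the Euclidean descent $T(m,km+p)\to T(p,m)$ with the unknot as base case, checking that the leftover monomial prefactor is $1$. The only difference is that you spell out the UFD/irreducibility argument justifying the reduction to the two evaluations, which the paper leaves implicit.
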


Now we are considering a problem relating to the sliceness of a knot.

\begin{definition}
The smooth 4-ball genus $g_{4}(\mathcal{K})$ of a knot $\mathcal{K}$ is the
minimum genus of a surface smoothly embedded in the 4-ball $B^{4}$ with
boundary the knot. In particular, a knot $\mathcal{K}\subset S^{3}$ is
called smoothly slice if $g_{4}(\mathcal{K})=0$.
\end{definition}

\begin{remark}
The invariant $\alpha (T(m,n))=-(m-1)(n-1)/2$ suggest a very close relation
between the above theorem and the following Milnor Conjecture, which was
first proved by P. B. Kronheimer and T. S. Mrowka in \cite{KM}
\end{remark}

\begin{conjecture}[Milnor]
The smooth 4-ball genus for torus knot $T(m,n)$ is $(m-1)(n-1)/2$.
\end{conjecture}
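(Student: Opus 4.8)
The plan is to establish the two inequalities $g_{4}(T(m,n))\leq (m-1)(n-1)/2$ and $g_{4}(T(m,n))\geq (m-1)(n-1)/2$ separately, following Rasmussen's combinatorial route, which fits naturally with the material already assembled above: the concordance invariant $s$ and its lower bound $|s(\mathcal{K})|\leq 2g_{4}(\mathcal{K})$ (Rasmussen's theorem). The upper bound is geometric and elementary; the lower bound is where all the real work lives.

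For the upper bound I would exhibit an explicit smoothly embedded surface. The torus knot $T(m,n)$ is the closure of the positive braid $(\sigma_{1}\sigma_{2}\cdots\sigma_{m-1})^{n}$, equivalently the link of the plane-curve singularity $\{x^{m}=y^{n}\}$. Applying Seifert's algorithm to this positive diagram produces a Seifert surface whose first Betti number is $c-O+1=(m-1)n-m+1=(m-1)(n-1)$, where $c=(m-1)n$ is the number of (all positive) crossings and $O=m$ is the number of Seifert circles; hence its genus is $(m-1)(n-1)/2$. Pushing the interior of this surface into $B^{4}$ gives a smoothly embedded surface bounding $T(m,n)$, so $g_{4}(T(m,n))\leq (m-1)(n-1)/2$.

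For the lower bound the key step is to compute $s(T(m,n))=(m-1)(n-1)$. I would use the slice--Bennequin inequality for the $s$-invariant, namely $s(\mathcal{K})\geq w(D)-O(D)+1$ for any diagram $D$ of $\mathcal{K}$ with writhe $w(D)$ and $O(D)$ Seifert circles. Applied to the positive braid diagram above, where $w=(m-1)n$ and $O=m$, this gives $s(T(m,n))\geq (m-1)(n-1)$. Conversely, writing $g_{3}$ for the ordinary Seifert genus and using $g_{4}\leq g_{3}$ (any Seifert surface pushes into $B^{4}$), Rasmussen's inequality yields $s(T(m,n))\leq 2g_{4}(T(m,n))\leq 2g_{3}(T(m,n))\leq (m-1)(n-1)$, the last estimate coming from the surface constructed above. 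Hence all inequalities collapse to equalities; in particular $g_{4}(T(m,n))\geq |s(T(m,n))|/2=(m-1)(n-1)/2$, which together with the upper bound gives $g_{4}(T(m,n))=(m-1)(n-1)/2$.

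The hard part will be the lower bound, and specifically the slice--Bennequin estimate $s(\mathcal{K})\geq w(D)-O(D)+1$ that drives the computation of $s(T(m,n))$: this is the genuinely substantive content of Rasmussen's theory and is not merely formal. An alternative route, conceptually aligned with the present paper, would replace $s$ by the invariant $\alpha$: since we have shown $\alpha(T(m,n))=-(m-1)(n-1)/2$, the conjectured bound $|\alpha(\mathcal{K})|\leq g_{4}(\mathcal{K})$ would deliver the same lower bound $g_{4}(T(m,n))\geq (m-1)(n-1)/2$ in one stroke, exactly paralleling the role played by $s$. That route, however, rests on an as-yet unproven conjecture, so for an actual proof I would rely on the established $s$-invariant inequality rather than on $\alpha$.
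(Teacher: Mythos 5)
Your proposal is a correct outline, but note that the paper itself offers no proof of this statement: it records the Milnor conjecture as an externally established theorem, first proved by Kronheimer--Mrowka \cite{KM} via gauge theory and reproved combinatorially by Rasmussen \cite{Ras2}. What you have written is, in essence, a faithful sketch of Rasmussen's argument. Your upper bound is sound: the Seifert surface of the positive braid closure $(\sigma_{1}\cdots\sigma_{m-1})^{n}$ has $c=(m-1)n$ crossings and $O=m$ Seifert circles, hence genus $(m-1)(n-1)/2$, and pushing it into $B^{4}$ bounds $g_{4}$ from above. Your lower bound correctly chains the slice--Bennequin estimate $s(\mathcal{K})\geq w(D)-O(D)+1$ with Rasmussen's inequality $|s|\leq 2g_{4}$ and $g_{4}\leq g_{3}$ to force all inequalities into equalities. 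The one place where the argument is not self-contained is exactly the one you flag: the slice--Bennequin inequality for $s$ (equivalently, the computation $s=2g$ for positive knots) is the deep input, and it is precisely the content of \cite{Ras2}; as long as you are citing that, the proof is complete. You are also right to reject the route through $\alpha(T(m,n))=-(m-1)(n-1)/2$ and Conjecture 2.13, since that bound is conjectural in this paper; indeed the logical flow here runs the other way, with the known value of $g_{4}(T(m,n))$ serving as evidence for the conjecture $|\alpha(\mathcal{K})|\leq g_{4}(\mathcal{K})$ rather than a consequence of it.
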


Rasmussen \cite{Ras2} introduced a knot concordant invariant $s(\mathcal{K})$%
,\ which is a lower bound for the smooth 4-ball genus for knots in the
following sense.

\begin{theorem}[Rasmussen]
For any knot $\mathcal{K}\subset S^{3}$, we have the following relation%
\begin{equation}
|s(\mathcal{K})|\leq 2g_{4}(\mathcal{K}).
\end{equation}
\end{theorem}

In addition, Rasmussen again proved Milnor Conjecture by a purely
combinatorial method in \cite{Ras2}.

Based on all the above results shown in table or proved via theorem, we are
able to propose the following conjecture

\begin{conjecture}
The invariant $\alpha (\mathcal{K})$ (determined by cyclotomic expansion
conjecture (Conj 1.3 or Conj. 2.3) for $N=1$) is a lower bound for smooth
4-ball genus $g_{4}(\mathcal{K})$, i.e.%
\begin{equation}
|\alpha (\mathcal{K})|\leq g_{4}(\mathcal{K}).
\end{equation}
\end{conjecture}

\begin{remark}
For many knots we tested, it is identical to the Ozsv\'{a}th-Szab\'{o}'s $%
\tau $ invariant and Rasmussen's $s$ invariant.
\end{remark}

\section{Volume conjecture for SU(n) specialized superpolynomial of
HOMFLY-PT homology}

First we present certain motivation to propose our Volume Conjecture for
superpolynomials assocaited to triply-graded reduced colored HOMFLY
homologies.

From (1.7), we have the following expression for figure eight knot $4_{1}$,

\begin{equation}
\mathcal{P}_{N-1}(4_{1};a,q,t)=1+\underset{k=1}{\overset{N-1}{\sum }}%
\underset{i=1}{\overset{k}{\prod }}\left( \frac{\{N-i\}}{\{i\}}%
A_{i-2}(a,q,t)B_{N-2+i}(a,q,t)\right) .
\end{equation}

where $A_{i}(a,q,t)=aq^{i}+t^{-1}a^{-1}q^{-i}$, $%
B_{i}(a,q,t)=t^{2}aq^{i}+t^{-1}a^{-1}q^{-i}$ and $\{p\}=q^{p}-q^{-p}$.

\bigskip

The idea of "Gap" in \cite{CLZ} plays an important role in proposing Volume
Conjectures. The middle terms in the cyclotomic expansion of colored $SU(n)$
invariants of figure eight knot is $\{N\}$ and $\{N+n\}$, thus "Gaps" are $%
N+1$, $N+2$,..., $N+n-1$. We choose these "Gaps" as our roots of unity.

Conjecture presented in \cite{CLZ} is the following

\begin{conjecture}[Volume Conjecture for colored $SU(n)$ invariants
\protect\cite{CLZ}]
For any hyperbolic knot $\mathcal{K}$, we have
\begin{equation*}
2\pi \underset{N\rightarrow \infty }{\lim }\frac{\log J_{N}^{SU(n)}(\mathcal{%
K};e^{\frac{\pi \sqrt{-1}}{N+a}})}{N+1}=Vol(S^{3}\backslash \mathcal{K})+%
\sqrt{-1}CS(S^{3} \backslash \mathcal{K})  \text{ }(mod\sqrt{-1}\pi ^{2}%
\mathbb{Z}
),
\end{equation*}

where $a=1,2,...,n-1$.
\end{conjecture}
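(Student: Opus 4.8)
The plan is to attack this conjecture by the cyclotomic-expansion plus saddle-point method, using the ``Gap'' root of unity $q=e^{\pi\sqrt{-1}/(N+a)}$ as a device that localizes a Habiro-type sum. First I would record the $SU(n)$ specialization $a=q^{n}$, $t=-1$ of the cyclotomic expansion (Conjecture 2.3), which for any knot takes the shape
\[
J_{N}^{SU(n)}(\mathcal{K};q)=1+\{n-1\}\sum_{k=1}^{N}C_{\mathcal{K},k}(q)\prod_{i=1}^{k}\frac{\{N+1-i\}}{\{i\}}\{N+n+i-1\},
\]
with integer-coefficient cyclotomic factors $C_{\mathcal{K},k}(q)\in\mathbb{Z}[q^{\pm1}]$ (the specializations of the $H_{k}$), coming from $A_{-1}(q^{n},q,-1)=\{n-1\}$, $A_{i-2}(q^{n},q,-1)=\{n+i-2\}$, and $B_{N+i-1}(q^{n},q,-1)=\{N+n+i-1\}$. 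The decisive structural feature is the Gap: the quantum factors that occur are the descending block $\{N\},\{N-1\},\dots$ and the ascending block $\{N+n\},\{N+n+1\},\dots$, so the values $N+1,\dots,N+n-1$ never appear. Choosing $q=e^{\pi\sqrt{-1}/(N+a)}$ with $1\le a\le n-1$ places the root of unity exactly on a missing value, where $\{N+a\}=0$, so the surviving product is regular and its exponential growth is the quantity one must measure.

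Second I would carry out the asymptotics. Writing $\{j\}=2\sqrt{-1}\sin(j\xi)$ with $\xi=\pi/(N+a)$ and setting $k=xN$, the logarithm of the explicit product divided by $N+1$ converges to a dilogarithm potential whose real part is assembled from the Lobachevsky function; combined with the growth rate $\Phi_{\mathcal{K}}(x)=\lim_{N\to\infty}\tfrac{1}{N+1}\log|C_{\mathcal{K},xN}|$ of the cyclotomic factors, the summand is $\exp\!\big((N+1)V_{\mathcal{K}}(x)+o(N)\big)$. I would then apply the Laplace/saddle-point method for sums, so that the limit in the statement equals the value of $V_{\mathcal{K}}$ at its dominant complex critical point. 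The final step is to match the critical-point equation $V_{\mathcal{K}}'(z)=0$ with the gluing (Neumann--Zagier) equations of the complete hyperbolic structure on $S^{3}\backslash\mathcal{K}$, and to check that, evaluated at the geometric solution and multiplied by $2\pi$, the potential equals $\mathrm{Vol}(S^{3}\backslash\mathcal{K})+\sqrt{-1}\,\mathrm{CS}(S^{3}\backslash\mathcal{K})$ modulo $\sqrt{-1}\pi^{2}\mathbb{Z}$. Independence of $a$ should then follow because changing $a$ only moves the root of unity among equivalent Gap positions and perturbs the potential by a term that is subleading in the saddle-point count; the $n=2$, $a=1$ figure-eight computation is the template for all of these local estimates.

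The hard part, and the reason this remains a conjecture, is the control of the cyclotomic factors $C_{\mathcal{K},k}$ for a general hyperbolic knot. In the figure-eight case all $C_{\mathcal{K},k}\equiv1$, so the volume is produced entirely by the knot-independent quantum-integer product; but for a general knot that product is the same for every knot, so the entire geometric dependence, and in particular the correct value of the volume, must be carried by the growth rate $\Phi_{\mathcal{K}}$ of $C_{\mathcal{K},k}$ at the Gap root of unity. Determining $\Phi_{\mathcal{K}}$ is essentially equivalent to the conjecture itself, and there is no closed form to feed into the saddle-point estimate. A second, already classical, obstacle is to show that the dominant critical point corresponds to the geometric (discrete faithful) $SL(n,\mathbb{C})$ representation rather than another component of the character variety, together with the rigorous justification of interchanging the $N\to\infty$ limit with the summation and of the uniform saddle-point bounds. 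These are the same analytic difficulties that keep even the original $n=2$, $a=1$ volume conjecture open beyond a short list of knots, so I expect that a proof valid for \emph{all} hyperbolic knots is out of reach with present tools, and that the realistic targets are families (two-bridge knots, torus-like cases, and knots with explicitly known Habiro expansions) where $\Phi_{\mathcal{K}}$ can be computed and matched against the hyperbolic volume.
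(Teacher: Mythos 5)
You have not proved the statement, and you say so yourself --- and this is in fact the only honest outcome, because the statement is an \emph{open conjecture}: the paper contains no proof of it. It is quoted from \cite{CLZ} purely as motivation for the new superpolynomial volume conjecture, and the only thing proved anywhere in this circle of ideas is the figure-eight case (Theorem 3.4 here for the superpolynomial version, and the corresponding figure-eight result in \cite{CLZ} for this $SU(n)$ version). So the concrete gaps are exactly the ones you name, and they are fatal to a general proof: (i) your very first step already rests on a second open conjecture, since the cyclotomic expansion with integral coefficients (Conjecture 2.3, or its decategorified $SU(n)$ analogue from \cite{CLPZ, CLZ}) is not known for a general knot, so the coefficients $C_{\mathcal{K},k}$ you want to feed into the Laplace method are not even known to exist; (ii) as you observe, the quantum-integer product in the expansion is knot-independent, so the entire geometric content sits in the growth rate $\Phi_{\mathcal{K}}$ of the cyclotomic coefficients at the Gap root of unity, and determining $\Phi_{\mathcal{K}}$ is essentially a restatement of the conjecture; (iii) even granting (i) and (ii), selecting the geometric critical point on the character variety and justifying the exchange of limit and summation are the same unresolved analytic issues that block the classical $n=2$, $a=1$ case.

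One substantive comparison is worth recording: in the single case that \emph{is} proved, the method is not the saddle-point/Neumann--Zagier matching you outline. For the figure eight all cyclotomic coefficients equal $1$, and the argument (Theorem 3.4 here, following Lemmas 3.5, 3.7, 3.8 of \cite{CLZ}) is an elementary maximum-term analysis: one shows by explicit sine inequalities that the dominant index satisfies $k_{m}\approx \frac{5}{6}(N+\widetilde{b})$, sandwiches the sum via $|g(N,k_{m})|\leq |\mathcal{P}|\leq N|g(N,k_{m})|$, and evaluates
\begin{equation*}
\lim_{N\rightarrow \infty }\frac{2\pi \log \mathcal{P}}{N}
=4\int_{0}^{\frac{5\pi }{6}}\log |2\sin t|\,dt=6\Lambda (\pi /3)=Vol(S^{3}\backslash 4_{1}),
\end{equation*}
with no reference to gluing equations or the $SL(n,\mathbb{C})$ character variety. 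Your programme is compatible with this in spirit (the Lobachevsky integral is the degenerate saddle value when all $C_{\mathcal{K},k}\equiv 1$), but a correct account should note that the known proofs avoid the critical-point machinery entirely, and that your condition analysis at the Gap needs the same care the paper builds into its superpolynomial conjecture via the hypothesis that $\frac{n-1-b}{2}$ is not a positive integer, which exists precisely to rule out vanishing sine factors in the range of summation.
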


Now we apply the same motivation of "Gaps" here, which seems a little bit
more complicated. Because "Gaps" in cyclotomic expansion of colored $SU(n)$
invariants of figure eight knot are essentially certain equation with only $%
q $ involved; while "Gaps" in cyclotomic expansion of $SU(n)$ specialized
superpolynomial of colored HOMFLY-PT homology are equations of both $q$ and $t$
involved.

\bigskip

By looking at middle terms $A_{N-3}(q^{n},q,t)=q^{N+n-3}+t^{-1}q^{-(N+n-3)}$
and $B_{N-1}(q^{n},q,t)=(-t)^{2}q^{N-1+n}+t^{-1}q^{-(N+n-1)}$, we get to
know the possible "Gaps" is the following equation

\begin{equation*}
(-t)q^{N+n-2}+t^{-1}q^{-(N+n-2)}=0
\end{equation*}

By solving equation%
\begin{equation*}
tq^{N+n-2}=t^{-1}q^{-(N+n-2)},
\end{equation*}

We take one solution
\begin{equation*}
t=q^{-(N+n-2)}
\end{equation*}

Thus we obtain the following expression for $A_{i}$ and $B_{i}$,%
\begin{eqnarray*}
A_{i}(q^{n},q,q^{-(N+n-2)}) &=&q^{i+n}+q^{N-i-2} \\
B_{i}(q^{n},q,q^{-(N+n-2)}) &=&q^{-2N-n+i+4}+q^{N-i-2}
\end{eqnarray*}

Then we express the $A_{i}$ and $B_{i}$ at roots of unity $q=e^{\frac{\pi
\sqrt{-1}}{N-1+b}}=e^{\frac{\pi \sqrt{-1}}{N+\widetilde{b}}}$, where $%
\widetilde{b}=b-1$, in the following way%
\begin{eqnarray*}
A_{i}(q^{n},q,q^{-(N+n-2)}) &=&q^{\frac{n-\widetilde{b}-2}{2}}(q^{\frac{%
\widetilde{b}+2i-n+2}{2}}-q^{-\frac{\widetilde{b}+2i-n+2}{2}}) \\
B_{i}(q^{n},q,q^{-(N+n-2)}) &=&q^{\frac{\widetilde{b}-n+2}{2}}(q^{\frac{n-2i-%
\widetilde{b}-2}{2}}-q^{-\frac{n-2i-\widetilde{b}-2}{2}})
\end{eqnarray*}

Now we are able to write down the $SU(n)$ specialized superpolynomial $%
\mathcal{P}_{N-1}(4_{1};q^{n},q,q^{-(N+n-2)})$ at roots of unity $q=e^{\frac{%
\pi \sqrt{-1}}{N-1+b}}$,%
\begin{equation}
\mathcal{P}_{N-1}(4_{1};q^{n},q,q^{-(N+n-2)})=1+\underset{j=1}{\overset{N-1}{%
\sum }}g(N,j)\text{,}
\end{equation}

where $g(N,j)=\underset{k=1}{\overset{j}{\prod }}f(N,k)$ and $f(N,k)=4\frac{%
\sin \frac{(\frac{n-2+\widetilde{b}}{2}+k)\pi }{N+\widetilde{b}}}{\sin \frac{%
k\pi }{N+\widetilde{b}}}\sin \frac{(k+\widetilde{b})\pi }{N+\widetilde{b}}%
\sin \frac{(-\frac{n-2-\widetilde{b}}{2}+k)\pi }{N+\widetilde{b}}$.

\begin{remark}
For $b=n-1$, i.e. $\widetilde{b}=n-2$ and $t=q^{-(N+n-2)}=e^{-\frac{N+n-2}{N+%
\widetilde{b}}}=-1$, which is the decategorified case,, we have $%
f(N,k)=\left( 2\sin \frac{(n-2+k)\pi }{N+n-2}\right) ^{2}$. This corresponds
to the (3.25) of \cite{CLZ}$.$
\end{remark}

Now we propose volume conjecture for $SU(n)$ specialized superpolynomials of
HOMFLY-PT homology as follows

\begin{conjecture}[Volume Conjecture for $SU(n)$ specialized superpolynomial]

For any hyperbolic knot $\mathcal{K}$, we have%
\begin{equation*}
2\pi \underset{N\rightarrow \infty }{\lim }\frac{\log \mathcal{P}_{N}(%
\mathcal{K};q^{n},q,q^{-(N+n-1)})|_{q=e^{\frac{\pi \sqrt{-1}}{N+b}}}}{N+1}%
=Vol(S^{3}\backslash \mathcal{K})+\sqrt{-1}CS(S^{3}\backslash \mathcal{K}) \text{ }(mod\sqrt{-1}\pi ^{2}%
\mathbb{Z}
),
\end{equation*}

where $b\geq 1$ and $\frac{n-1-b}{2}$ is not a positive integer.
\end{conjecture}

\begin{remark}
The condition of this conjecture is
much more relaxed than condition of former Volume conjectures, because here $b$ can be
any larger integers. For example, original volume conjecture is only valid for $%
n=2$ and $b=1$, but this volume conjecture is valid for all positive integer $b$
with $n=2$.
\end{remark}

\begin{remark}
It will be interesting to know the relationship of between this volume conjecture and
the one proposed by Fuji, Gukov and Sulkowski in \cite{FGS1}, where they used categorified A-polynomials
of knots.
\end{remark}

We prove this volume conjecture for the case of figure eight knot $4_{1}$.

\begin{theorem}
The above volume conjecture is valid for figure eight knot $4_{1}$.
\end{theorem}

\begin{proof}
The $SU(n)$ specialized superpolynomial $\mathcal{P}%
_{N-1}(4_{1};q^{n},q,q^{-(N+n-2)})$ at roots of unity $q=e^{\frac{\pi \sqrt{%
-1}}{N-1+b}}$ is given by%
\begin{equation}
\mathcal{P}_{N-1}(4_{1};q^{n},q,q^{-(N+n-2)})=1+\underset{j=1}{\overset{N-1}{%
\sum }}g(N,j)\text{,}
\end{equation}%
where $g(N,j)=\underset{k=1}{\overset{j}{\prod }}f(N,k)$, $f(N,k)=4\frac{%
\sin \frac{(\frac{n-2+\widetilde{b}}{2}+k)\pi }{N+\widetilde{b}}}{\sin \frac{%
k\pi }{N+\widetilde{b}}}\sin \frac{(k+\widetilde{b})\pi }{N+\widetilde{b}}%
\sin \frac{(-\frac{n-2-\widetilde{b}}{2}+k)\pi }{N+\widetilde{b}}$ and $%
\widetilde{b}=b-1$.

Condition that $\frac{n-1-b}{2}$ is not a positive integer assures that $%
\sin \frac{(-\frac{n-2-\widetilde{b}}{2}+k)\pi }{N+\widetilde{b}}\neq 0$ for
$1\leq k\leq N-1$.

There is an fact that $f(N,k)$ can be negative only for very small $k$.

Thus when we do the estimation for $g(N,k)$, the sign of $g(N,k)$ is not
changed for $k>\frac{n-2-\widetilde{b}}{2}$.

Similar to the proof in \cite{CLZ}, our task is to search for $k_{m}$ such
that $|g(N,k)|$ reaches its maximum value.

We claim the following inequalities (Similar to Lemma 3.7 of \cite{CLZ}),
\begin{equation*}
\left\lfloor \frac{5}{6}(N+\widetilde{b})-\frac{3(n-2)+7\widetilde{b}}{4}%
\right\rfloor \leq k_{m}\leq \left\lfloor \frac{5}{6}(N+\widetilde{b})+\frac{%
n-2-\widetilde{b}}{2}\right\rfloor ,
\end{equation*}

where the floor function $\left\lfloor x\right\rfloor $ denotes the greatest
integer that is less than or equal to $x$.

The upper bound of $k_{m}$ is clear, in fact if $k_{m}\geq \frac{5}{6}(N+%
\widetilde{b})+\frac{n-2-\widetilde{b}}{2}$, then $f(N,k_{m})<1$

We need to estimate a lower bound of $k_{m}$, we can assume $\frac{1}{2}\leq
k_{m}\leq \frac{11}{12}$

\begin{equation*}
\frac{\sin \frac{(\frac{n-2+\widetilde{b}}{2}+k)\pi }{N+\widetilde{b}}}{\sin
\frac{k\pi }{N+\widetilde{b}}}=\sin \frac{\frac{n-2+\widetilde{b}}{2}\pi }{N+%
\widetilde{b}}\cot \frac{k\pi }{N+\widetilde{b}}+\cos \frac{\frac{n-2+%
\widetilde{b}}{2}\pi }{N+\widetilde{b}}
\end{equation*}

Set $\frac{n-2+\widetilde{b}}{2(N+\widetilde{b})}\pi =\alpha $ for $\frac{1}{%
2}\leq k\leq \frac{11}{12}$, we have $\frac{\sin \frac{(\frac{n-2+\widetilde{%
b}}{2}+k)\pi }{N+\widetilde{b}}}{\sin \frac{k\pi }{N+\widetilde{b}}}\geq 1-%
\frac{1}{2}\alpha ^{2}-\cot \frac{\pi }{12}\alpha ,$

where we used the inequality: $\sin \alpha <\alpha $ and $\cos \alpha >1-%
\frac{1}{2}\alpha ^{2}$ for small $\alpha >0$.

We also have%
\begin{eqnarray*}
4\sin \frac{(k+\widetilde{b})\pi }{N+\widetilde{b}}\sin \frac{(-\frac{n-2-%
\widetilde{b}}{2}+k)\pi }{N+\widetilde{b}} &\geq &4\sin ^{2}\frac{(k+%
\widetilde{b})\pi }{N+\widetilde{b}} \\
&=&4\sin ^{2}(\frac{5\pi }{6}-\beta ) \\
&=&1+2\sqrt{3}\sin \beta +2\sin ^{2}\beta ,
\end{eqnarray*}

where $\beta =\frac{5\pi }{6}-\frac{(k+\widetilde{b})\pi }{N+\widetilde{b}}.$

Thus we have%
\begin{eqnarray*}
f(N,k) &=&4\frac{\sin \frac{(\frac{n-2+\widetilde{b}}{2}+k)\pi }{N+%
\widetilde{b}}}{\sin \frac{k\pi }{N+\widetilde{b}}}\sin \frac{(k+\widetilde{b%
})\pi }{N+\widetilde{b}}\sin \frac{(-\frac{n-2-\widetilde{b}}{2}+k)\pi }{N+%
\widetilde{b}} \\
&\geq &(1-\frac{1}{2}\alpha ^{2}-\cot \frac{\pi }{12}\alpha )(1+2\sqrt{3}%
\sin \beta +2\sin ^{2}\beta ) \\
&=&1+2\sqrt{3}\beta -(2+\sqrt{3})\alpha +O(\alpha ^{2})+O(\beta ^{2})
\end{eqnarray*}

If we let $k_{0}=\frac{5}{6}(N+\widetilde{b})-\frac{3(n-2)+7\widetilde{b}}{4}
$ Then $\beta =\frac{3}{2}\alpha $, we have $f(N,k)>1$.

By a similar argument in Lemma 3.8 of \cite{CLZ} (corresponding to $s=1$
case there) and remembering that $\sin \frac{(-\frac{n-2-\widetilde{b}}{2}%
+k)\pi }{N+\widetilde{b}}$ could take negative values for small integer $k$,
we have%
\begin{equation*}
|g(N,k_{m})|\leq |\mathcal{P}_{N-1}(4_{1};q^{n},q,q^{-(N+n-2)})|\leq
N|g(N,k_{m})|
\end{equation*}
for sufficient large $N$.

By the method in the proof of Lemma 3.5 and argument in Proposition 3.10 in
\cite{CLZ}, we could finish the proof.%
\begin{eqnarray*}
&&\underset{N\rightarrow \infty }{\lim }\frac{2\pi \log \mathcal{P}%
_{N-1}(4_{1};q^{n},q,q^{-(N+n-2)})}{N} \\
&=&2\pi \frac{5}{6}\log 4+2\pi \frac{2}{\pi }\int\limits_{0}^{\frac{5\pi }{6}%
}\log |\sin (t)|dt \\
&=&4\int\limits_{0}^{\frac{5\pi }{6}}\log |2\sin (t)|dt \\
&=&6\Lambda (\pi /3) \\
&=&Vol(S^{3}\backslash 4_{1})
\end{eqnarray*}
\end{proof}

\section{Superpolynomials of colored Kauffman homology}

In this section, we study cyclotomic expansion for superpolynomial $\mathcal{%
F}_{N}(\mathcal{K};a,q,t)$ of triply-graded reduced colored Kauffman
homology formulated by S. Gukov and J. Walcher in \cite{GW}. We obtain the
similar expansion conjecture.

\begin{conjecture}
For any knot $\mathcal{K}$, there exists an integer valued invariant $\beta (\mathcal{K})\in
\mathbb{Z}
$, s.t. the superpolynomial $\mathcal{F}_{N}(\mathcal{K};a,q,t)$
of triply-graded reduced colored Kauffman homology of a knot $\mathcal{K}$
has the following cyclotomic expansion formula%
\begin{equation}
(-t)^{N\beta (\mathcal{K})}\mathcal{F}_{N}(\mathcal{K};a^{2},q^{2},t)=1+%
\underset{k=1}{\overset{N}{\sum }}F_{k}(\mathcal{K};a,q,t)\left( A_{-1}(a,q,t)%
\underset{i=1}{\overset{k}{\prod }}\left( \frac{\{2(N+1-i)\}}{\{2i\}}%
B_{N+i-2}(a^{2},q^{2},t)\right) \right)
\end{equation}

with coefficient functions $F_{k}(\mathcal{K};a,q,t)\in
\mathbb{Z}
\lbrack a^{\pm 1},q^{\pm 1},t^{\pm 1}]$, where $%
A_{i}(a,q,t)=aq^{i}+t^{-1}a^{-1}q^{-i}$, $%
B_{i}(a,q,t)=t^{2}aq^{i}+t^{-1}a^{-1}q^{-i}$ and $\{p\}=q^{p}-q^{-p}$.

In particular, one further have $\frac{F_{1}(\mathcal{K};a,q,t)}{%
taq+t^{-1}a^{-1}q^{-1}}\in
\mathbb{Z}
\lbrack a^{\pm 1},q^{\pm 1},t^{\pm 1}].$
\end{conjecture}

\begin{remark}
The above Conjecture-Definition for invariant $\beta (\mathcal{K})$ should be
understood in this way. If the above conjecture of a knot $\mathcal{K}$ is true for $N=1$,
then $\beta (\mathcal{K})$ is defined. The next level of the conjecture is for $N\geq
2$ by using the same $\beta (\mathcal{K})$. In this way, $\beta (\mathcal{K})$ is defined even
though the conjecture is only true for $N=1$.
\end{remark}

\begin{remark}
$F_{k}(\mathcal{K};a,q,t)$ is independent of $N$, which only depends
on knot $\mathcal{K}$ and integer $k$.
\end{remark}

\begin{remark}
One can also make the conjecture for $\mathcal{F}_{N}(\mathcal{K};a,q,t)$
instead of $\mathcal{F}_{N}(\mathcal{K};a^{2},q^{2},t)$, but one will get a
factor $a+t^{-1}q$ instead of $A_{1}(a,q,t)=aq^{-1}+t^{-1}a^{-1}q$, which is
a symmetric form by setting $t=-1$.
\end{remark}

For instance, we have the following expansion for $N=1$ and $2$.%
\begin{equation}
(-t)^{\beta (\mathcal{K})}\mathcal{F}_{1}(\mathcal{K};a,q,t)=1+F_{1}(%
\mathcal{K};a,q,t)(aq^{-1}+t^{-1}a^{-1}q^{1})(t^{2}a^{2}+t^{-1}a^{-2})
\end{equation}

and

\begin{eqnarray}
(-t)^{2\beta (\mathcal{K})}\mathcal{F}_{2}(\mathcal{K};a,q,t) &=&1+F_{1}(%
\mathcal{K}%
;a,q,t)(aq^{-1}+t^{-1}a^{-1}q^{1})(q^{2}+q^{-2})(t^{2}a^{2}q^{2}+t^{-1}a^{-2}q^{-2})
\notag \\
&&+F_{2}(\mathcal{K}%
;a,q,t)(aq^{-1}+t^{-1}a^{-1}q^{1})(t^{2}a^{2}q^{2}+t^{-1}a^{-2}q^{-2})(t^{2}a^{2}q^{4}+t^{-1}a^{-2}q^{-4})
\end{eqnarray}

\bigskip

Now we list a table of these cyclotomic expansion coefficients of
superpolynomials for colored Kauffman Homology with small crossing numbers,
where we used tables from pp40 in \cite{GW}.

\bigskip

$%
\begin{array}{llll}
\mathcal{K} & \sigma (\mathcal{K}) & \beta (\mathcal{K}) & K_{1}(\mathcal{K}%
,a,q,t)/(aqt+a^{-1}q^{-1}t^{-1}) \\
3_{1} & -2 & 2 & -a^{4}t^{3}+a^{6}q^{\underline{2}}t^{4}+a^{6}q^{2}t^{5} \\
&  &  &  \\
4_{1} & 0 & 0 & q^{\underline{4}}t^{\underline{1}}+1+q^{4}t^{1} \\
&  &  &  \\
5_{1} & -4 & 4 &
\begin{array}{l}
-a^{4}t^{3}+a^{6}q^{\underline{2}}t^{4}+a^{6}q^{2}t^{5}-a^{8}q^{\underline{4}%
}t^{5}-a^{8}q^{4}t^{7} \\
+a^{10}q^{\underline{6}}t^{6}+a^{10}q^{6}t^{9}+a^{14}q^{\underline{2}%
}t^{10}+a^{14}q^{2}t^{11}%
\end{array}
\\
&  &  &  \\
5_{2} & -2 & 2 & -a^{4}t^{3}+a^{6}q^{\underline{2}%
}t^{4}+a^{6}q^{2}t^{5}+a^{8}t^{6}+a^{10}q^{\underline{6}}t^{6}+a^{10}q^{%
\underline{2}}t^{7}+a^{10}q^{2}t^{8}+a^{10}q^{6}t^{9} \\
&  &  &  \\
6_{1} & 0 & 0 &
\begin{array}{l}
q^{\underline{4}}t^{\underline{1}}+1+q^{4}t^{1}+a^{2}q^{\underline{2}}t^{%
\underline{1}}+a^{2}q^{2}t^{2} \\
+a^{4}q^{\underline{8}}t^{1}+a^{4}q^{\underline{4}%
}t^{2}+a^{4}t^{3}+a^{4}q^{4}t^{4}+a^{4}q^{8}t^{5}%
\end{array}
\\
&  &  &  \\
6_{2} & -2 & 2 &
\begin{array}{l}
a^{4}q^{\underline{8}}t^{1}+a^{4}q^{\underline{4}%
}t^{2}+a^{4}t^{3}+a^{4}q^{4}t^{4}+a^{4}q^{8}t^{5}+a^{6}q^{\underline{6}}t^{3}
\\
+2a^{6}q^{\underline{2}}t^{4}+2a^{6}q^{2}t^{5}+a^{6}q^{6}t^{6}+a^{8}q^{%
\underline{4}}t^{5}+2a^{8}t^{6}+a^{8}q^{4}t^{7}%
\end{array}
\\
&  &  &  \\
6_{3} & 0 & 0 &
\begin{array}{l}
a^{\underline{2}}q^{\underline{6}}t^{\underline{3}}+2a^{\underline{2}}q^{%
\underline{2}}t^{\underline{2}}+2a^{\underline{2}}q^{2}t^{\underline{1}}+a^{%
\underline{2}}q^{6}+q^{\underline{8}}t^{\underline{2}}+2q^{\underline{4}}t^{%
\underline{1}} \\
+3+2q^{4}t^{1}+q^{8}t^{2}+a^{2}q^{\underline{6}}+2a^{2}q^{\underline{2}%
}t+2a^{2}q^{2}t^{2}+a^{2}q^{6}t^{3}%
\end{array}
\\
&  &  &  \\
8_{19} & 6 & -6 &
\begin{array}{l}
a^{\underline{22}}q^{\underline{2}}t^{\underline{18}}+a^{\underline{22}}q^{%
\underline{2}}t^{\underline{17}}+a^{\underline{22}}q^{2}t^{\underline{17}%
}+a^{\underline{22}}q^{2}t^{\underline{16}}+a^{\underline{18}}q^{\underline{6%
}}t^{\underline{16}}+a^{\underline{18}}q^{\underline{6}}t^{\underline{15}}
\\
+a^{\underline{18}}q^{6}t^{\underline{13}}+a^{\underline{18}}q^{6}t^{%
\underline{12}}+a^{\underline{16}}t^{\underline{13}}+a^{\underline{14}}q^{%
\underline{10}}t^{\underline{13}}+a^{\underline{14}}q^{\underline{2}}t^{%
\underline{11}} \\
+a^{\underline{14}}q^{10}t^{\underline{8}}+a^{\underline{14}}q^{2}t^{%
\underline{10}}-a^{\underline{12}}q^{\underline{8}}t^{\underline{11}}-a^{%
\underline{12}}t^{\underline{9}}-a^{\underline{12}}q^{8}t^{\underline{7}}+a^{%
\underline{10}}q^{\underline{6}}t^{\underline{9}}+a^{\underline{10}}q^{6}t^{%
\underline{6}} \\
-a^{\underline{8}}q^{\underline{4}}t^{\underline{7}}-a^{\underline{8}%
}q^{4}t^{\underline{5}}+a^{\underline{6}}q^{\underline{2}}t^{\underline{5}%
}+a^{\underline{6}}q^{2}t^{\underline{4}}-a^{\underline{4}}t^{\underline{3}}%
\end{array}
\\
&  &  &  \\
9_{42} & 2 & 0 &
\begin{array}{l}
a^{\underline{2}}q^{\underline{6}}t^{\underline{5}}+a^{\underline{2}}q^{%
\underline{6}}t^{\underline{4}}+a^{\underline{2}}q^{\underline{2}}t^{%
\underline{4}}+a^{\underline{2}}q^{\underline{2}}t^{\underline{3}}+a^{%
\underline{2}}q^{2}t^{\underline{3}}+a^{\underline{2}}q^{2}t^{\underline{2}%
}+a^{\underline{2}}q^{6}t^{\underline{2}} \\
+a^{\underline{2}}q^{6}t^{\underline{1}}+q^{\underline{12}}t^{\underline{5}%
}+q^{\underline{8}}t^{\underline{4}}+q^{\underline{4}}t^{\underline{3}%
}+q^{4}t^{\underline{1}}+q^{8}+q^{12}t^{1}+a^{2}q^{\underline{6}}t^{%
\underline{2}}+a^{2}q^{\underline{6}}t^{\underline{1}} \\
+a^{2}q^{\underline{2}}t^{\underline{1}}+a^{2}q^{\underline{2}}+2a^{2}t^{%
\underline{2}}+2a^{2}t^{\underline{1}%
}+a^{2}q^{2}+a^{2}q^{2}t^{1}+a^{2}q^{6}t^{1}+a^{2}q^{6}t^{2}%
\end{array}%
\end{array}%
$

\bigskip

Now we listed the tables for knot $3_{1}$ and $4_{1}$ with higher
representation involved, where we used data from \cite{NRZ2}. Indeed, we
checked much higher representation, we only listed results for $K_{2}(%
\mathcal{K},a,q,t)$.

\bigskip

$%
\begin{array}{llll}
\mathcal{K} & \sigma (\mathcal{K}) & \beta (\mathcal{K}) & K_{2}(\mathcal{K}%
,a,q,t) \\
3_{1} & -2 & 2 &
\begin{array}{l}
a^{5}q^{1}t^{3}-a^{7}q^{\underline{1}%
}t^{4}-a^{9}q^{1}t^{5}-a^{9}q^{1}t^{6}-a^{9}q^{5}t^{6}-a^{9}q^{5}t^{7}-a^{9}q^{9}t^{7}
\\
+a^{11}q^{\underline{1}%
}t^{6}+a^{13}q^{5}t^{8}+a^{13}q^{9}t^{8}+2a^{13}q^{9}t^{9}+a^{13}q^{13}t^{9}+a^{13}q^{13}t^{10}
\\
+a^{13}q^{17}t^{10}+a^{13}q^{21}t^{11}+a^{15}q^{11}t^{10}+a^{15}q^{15}t^{11}+a^{15}q^{19}t^{11}+a^{15}q^{23}t^{12}%
\end{array}
\\
&  &  &  \\
4_{1} & 0 & 0 &
\begin{array}{l}
a^{\underline{3}}q^{\underline{19}}t^{\underline{5}}+a^{\underline{3}}q^{%
\underline{15}}t^{\underline{4}}+a^{\underline{3}}q^{\underline{11}}t^{%
\underline{4}}+2a^{\underline{3}}q^{\underline{7}}t^{\underline{3}}+a^{%
\underline{3}}q^{\underline{3}}t^{\underline{3}}+a^{\underline{3}}q^{%
\underline{3}}t^{\underline{2}}+a^{\underline{3}}q^{1}t^{\underline{2}} \\
+a^{\underline{3}}q^{5}t^{\underline{1}}+a^{\underline{1}}q^{\underline{17}%
}t^{\underline{4}}+2a^{\underline{1}}q^{\underline{13}}t^{\underline{3}}+2a^{%
\underline{1}}q^{\underline{9}}t^{\underline{3}}+2a^{\underline{1}}q^{%
\underline{9}}t^{\underline{2}}+4a^{\underline{1}}q^{\underline{5}}t^{%
\underline{2}}+a^{\underline{1}}q^{\underline{5}}t^{\underline{1}} \\
+a^{\underline{1}}q^{\underline{1}}t^{\underline{2}}+4a^{\underline{1}}q^{%
\underline{1}}t^{\underline{1}}+3a^{\underline{1}}q^{3}t^{\underline{1}}+2a^{%
\underline{1}}q^{3}+3a^{\underline{1}}q^{7}+a^{\underline{1}}q^{7}t^{1}+a^{%
\underline{1}}q^{11}t^{1} \\
+a^{1}q^{\underline{11}}t^{\underline{2}}+a^{1}q^{\underline{7}}t^{%
\underline{2}}+3a^{1}q^{\underline{7}}t^{\underline{1}}+2a^{1}q^{\underline{3%
}}t^{\underline{1}}+3a^{1}q^{\underline{3}}+4a^{1}q^{1}+a^{1}q^{1}t^{1} \\
+a^{1}q^{5}+4a^{1}q^{5}t^{1}+2a^{1}q^{9}t^{1}+2a^{1}q^{9}t^{2}+2a^{1}q^{13}t^{2}+a^{1}q^{17}t^{3}+a^{3}q^{%
\underline{5}} \\
+a^{3}q^{\underline{1}%
}t^{1}+a^{3}q^{3}t^{1}+a^{3}q^{3}t^{2}+2a^{3}q^{7}t^{2}+a^{3}q^{11}t^{3}+a^{3}q^{15}t^{3}+a^{3}q^{19}t^{4}%
\end{array}%
\end{array}%
$

\begin{remark}
For these examples we tested, we found that $\beta (\mathcal{K})=2\alpha (%
\mathcal{K})$.
\end{remark}

\section{Poincare Polynomial of Heegaard-Floer Knot Homology}

There is a well-known result that Heeggard-Floer homology of an alternative
knot can be determined by a very simple method with only Alexander
polynomials and signature involved. This result was proved by Ozsv\'{a}%
th-Szab\'{o} \cite{OS1}.

\begin{theorem}[Ozsv\'{a}th-Szab\'{o}]
Let K$\subset S^{3}$ be an alternating knot with Alexander-Conway polynomial $%
\Delta _{K}(q)=\underset{s\in
\mathbb{Z}
}{\sum }a_{s}q^{s}$ and signature $\sigma =\sigma (K)$. Then we have%
\begin{equation}
\widehat{HFK}_{i}(K,s)=\left\{
\begin{array}{c}
\mathbb{Z}
^{|a_{s}|} \\
0%
\end{array}%
\right.
\begin{array}{c}
\text{if }i=s+\frac{\sigma }{2} \\
\text{otherwise}%
\end{array}%
\end{equation}
\end{theorem}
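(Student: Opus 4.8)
The plan is to reduce the statement to the single claim that the knot Floer homology of an alternating knot is \emph{thin}: for each Alexander grading $s$ the group $\widehat{HFK}_i(K,s)$ is supported in the one Maslov grading $i=s+\sigma/2$ and is free abelian there. Granting thinness, the result follows from two classical inputs. First, the graded Euler characteristic of knot Floer homology recovers the Alexander polynomial,
\[
\sum_{i,s}(-1)^i\,\mathrm{rk}\,\widehat{HFK}_i(K,s)\,q^s=\Delta_K(q),
\]
so in each Alexander grading the alternating sum of ranks equals $a_s$. Second, thinness says there is no cancellation within a fixed $s$, so the rank in the unique surviving Maslov grading $i=s+\sigma/2$ is exactly $|a_s|$, and the sign $(-1)^{s+\sigma/2}$ is forced to agree with $\mathrm{sign}(a_s)$ --- recovering Murasugi's classical fact that the Alexander polynomial of an alternating knot has alternating coefficients. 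Thus everything hinges on proving thinness and on pinning down the diagonal $i-s=\sigma/2$.

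For thinness I would argue by induction using the unoriented skein exact triangle in knot Floer homology, which relates $\widehat{HFK}$ of a link $L$ to those of its two resolutions $L_0$ and $L_1$ at a distinguished crossing. The natural inductive framework is the class of \emph{quasi-alternating} links, defined recursively by the determinant additivity $\det(L)=\det(L_0)+\det(L_1)$ with $L_0,L_1$ quasi-alternating; every alternating knot is quasi-alternating, so it suffices to treat this larger class (this is also what licenses the Manolescu--Ozsv\'{a}th extension mentioned above). The base cases are the unknot and unlinks, whose knot Floer homology is thin by direct computation. In the inductive step I would assume $L_0$ and $L_1$ are thin on their respective diagonals and feed them into the exact triangle.

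The engine of the induction is a rank count. One always has the lower bound $\mathrm{rk}\,\widehat{HFK}(L,s)\ge |a_s|$ in each Alexander grading from the Euler characteristic, hence $\mathrm{rk}\,\widehat{HFK}(L)\ge\sum_s|a_s|=|\Delta_L(-1)|=\det(L)$, the last equality using the alternating-sign property of the coefficients. On the other hand the skein triangle gives $\mathrm{rk}\,\widehat{HFK}(L)\le \mathrm{rk}\,\widehat{HFK}(L_0)+\mathrm{rk}\,\widehat{HFK}(L_1)$, and by the inductive hypothesis the right side equals $\det(L_0)+\det(L_1)=\det(L)$. Squeezing these two inequalities forces equality everywhere: the connecting map in the triangle must vanish, the homology of $L$ is concentrated in a single Maslov grading per Alexander grading, and its total rank is exactly $\det(L)$. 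This is precisely thinness.

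The last task, and the step I expect to be the most delicate, is identifying the diagonal as $i-s=\sigma/2$ rather than some other constant. The $\delta$-grading $\delta=i-s$ on which the homology concentrates shifts in a controlled way across the skein triangle, and I would match this shift against the change of the signature $\sigma$ under the corresponding resolution, using the standard behavior of $\sigma$ under oriented skein moves. The bookkeeping of the two grading shifts --- the internal Maslov/Alexander shifts built into the exact triangle and the jump in $\sigma$ --- is exactly where the argument becomes technical, and getting the normalizations to agree (anchored by the explicit base case, where for the unknot $\widehat{HFK}$ sits in bidegree $(0,0)$ and $\sigma=0$) is the crux. Once the diagonal is correctly normalized for all quasi-alternating links, specializing to alternating knots completes the proof.
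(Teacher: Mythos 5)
This statement is quoted in the paper as a known result of Ozsv\'{a}th--Szab\'{o} and is simply cited to \cite{OS1}; the paper supplies no proof of it, so there is no internal argument to compare against. Judged on its own, your outline is a faithful sketch of a correct and standard proof --- but it is the Manolescu--Ozsv\'{a}th route for quasi-alternating links (unoriented skein triangle, determinant additivity, squeeze between the Euler-characteristic lower bound and the exact-triangle upper bound), not the original Ozsv\'{a}th--Szab\'{o} argument. The original proof in \cite{OS1} works at the chain level: for an alternating projection the generators of the knot Floer complex are identified with Kauffman states, and one shows directly that every generator satisfies $M(\mathbf{x})-A(\mathbf{x})=-\sigma/2$ (via the combinatorial state formulas for the two gradings), so the complex itself is thin and the homology is read off from the Euler characteristic. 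Your route buys generality (it proves thinness for all quasi-alternating links, which is exactly the extension the paper invokes from \cite{MO}), at the cost of the grading bookkeeping in the exact triangle that you correctly flag as the delicate step.

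Two small points. First, in your rank lower bound you write $\sum_s|a_s|=|\Delta_L(-1)|=\det(L)$ and attribute the equality to the alternating-sign property of the coefficients; at that stage of the induction that property is not yet available (it is a consequence of thinness), and for a general link it can fail. Fortunately only the inequality $\sum_s|a_s|\ge|\Delta_L(-1)|=\det(L)$ is needed for the squeeze, and that holds unconditionally, so the argument survives with the equality replaced by an inequality. Second, the induction necessarily passes through multi-component links (the resolutions of a knot crossing), where the normalization of $\widehat{HFK}$ and of the Alexander polynomial carries extra factors; this is handled in \cite{MO} but deserves explicit mention, since the clean identity $\mathrm{rk}\,\widehat{HFK}=\det$ for the resolved links is what closes the squeeze.
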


It was shown by C. Manolescu and P.S. Ozsv\'{a}th \cite{MO} that
quasi-alternating knots hold the same results. So it is trivial to check for
these invariants.

Thus we only focus on those homological thick knots with small crossing
numbers described by M. Khovanov on pp. 3 in \cite{Kho2} (up to $10$
crossings) and further test $41$ homologically thick knots up to $11$
crossings.

We observe an expansion formula for Poincare polynomial of Heegaard-Floer
knot homology.

For a knot $\mathcal{K}$, there exists an integer valued invariant $\gamma (%
\mathcal{K})\in
\mathbb{Z}
$ of a knot $\mathcal{K}$, s.t. Poincare polynomial $HFK(\mathcal{K};q^{2},t)
$ of Heegaard-Floer knot homology of a knot $\mathcal{K}$ has the following
expansion formula%
\begin{equation}
(-t)^{\gamma (\mathcal{K})}HFK(\mathcal{K};q^{2},t)=1+KF(\mathcal{K}%
;q,t)(q+t^{-1}q^{-1})^{2}  \label{Heegaard-Floer Expansion}
\end{equation}

with coefficient functions $KF(\mathcal{K};q,t)\in
\mathbb{Z}
\lbrack q^{\pm 1},t^{\pm 1}]$.

\bigskip

Similar to the invariant $\tau $ introduced in Heegaard-Floer theory, we
also have the following theorem for quasi-alternating knots.

\begin{theorem}
The above expansion formula (\ref{Heegaard-Floer Expansion}) is true for any quasi-alternating knot $%
\mathcal{K}$. Furthermore, we have
\begin{equation}
\gamma (\mathcal{K})=-\frac{\sigma (\mathcal{K})}{2}
\end{equation}

and the following expansion
\begin{equation}
(-t)^{-\frac{\sigma (\mathcal{K})}{2}}HFK(\mathcal{K}%
;q^{2},t)=1+(q+t^{-1}q^{-1})^{2}KF(\mathcal{K};q,t),
\end{equation}

where $\sigma (\mathcal{K})$ is the signature of a knot $\mathcal{K}$.
\end{theorem}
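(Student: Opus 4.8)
The plan is to exploit the thinness of quasi-alternating knot Floer homology granted by Theorem 1.1 together with the symmetry of the Alexander polynomial. First I would invoke Theorem 1.1 of Ozsv\'{a}th-Szab\'{o} (extended to quasi-alternating knots by Manolescu-Ozsv\'{a}th \cite{MO}): for a quasi-alternating knot $\mathcal{K}$ with $\Delta_{\mathcal{K}}(q)=\sum_s a_s q^s$ and signature $\sigma=\sigma(\mathcal{K})$, the group $\widehat{HFK}_i(\mathcal{K},s)$ is $\mathbb{Z}^{|a_s|}$ concentrated on the single diagonal $i=s+\sigma/2$ (recall $\sigma$ is even for a knot, so this shift is integral). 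Consequently the Poincar\'{e} polynomial collapses to
\begin{equation}
HFK(\mathcal{K};q,t)=\sum_s |a_s|\,t^{s+\sigma/2}q^s.
\end{equation}

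Next I would pin down the signs of the $a_s$. Setting $t=-1$ and comparing with $HFK(\mathcal{K};q,-1)=\Delta_{\mathcal{K}}(q)$ forces $a_s=(-1)^{s+\sigma/2}|a_s|$, i.e.\ the coefficients of $\Delta_{\mathcal{K}}$ alternate along the diagonal. Substituting $|a_s|=(-1)^{s+\sigma/2}a_s$ into the collapsed Poincar\'{e} polynomial and replacing $q$ by $q^2$ yields
\begin{equation}
(-t)^{-\sigma/2}HFK(\mathcal{K};q^2,t)=\sum_s a_s(-tq^2)^s=\Delta_{\mathcal{K}}(-tq^2),
\end{equation}
which immediately identifies $\gamma(\mathcal{K})=-\sigma(\mathcal{K})/2$. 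It then remains only to show that $\Delta_{\mathcal{K}}(-tq^2)-1$ is divisible by $(q+t^{-1}q^{-1})^2$ in $\mathbb{Z}[q^{\pm 1},t^{\pm 1}]$.

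For the divisibility I would introduce the single variable $u=-tq^2$, under which $q+t^{-1}q^{-1}=q^{-1}t^{-1}(1-u)$, hence $(q+t^{-1}q^{-1})^2=q^{-2}t^{-2}(1-u)^2$. Thus it suffices to show that $\Delta_{\mathcal{K}}(u)-1$ vanishes to order at least two at $u=1$. The first-order vanishing is the normalization $\Delta_{\mathcal{K}}(1)=1$; the second-order vanishing follows from the symmetry $\Delta_{\mathcal{K}}(u)=\Delta_{\mathcal{K}}(u^{-1})$ (equivalently $a_s=a_{-s}$), which gives $\Delta_{\mathcal{K}}'(1)=-\Delta_{\mathcal{K}}'(1)$ and therefore $\Delta_{\mathcal{K}}'(1)=0$. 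Since $(1-u)^2$ is monic with integer coefficients and $u=1$ is a genuine double root, clearing the finitely many negative powers of $u$ and dividing in $\mathbb{Z}[u]$ produces $g(u)=(\Delta_{\mathcal{K}}(u)-1)/(1-u)^2\in\mathbb{Z}[u^{\pm 1}]$. Translating back, $KF(\mathcal{K};q,t)=q^2t^2\,g(-tq^2)$ lies in $\mathbb{Z}[q^{\pm 1},t^{\pm 1}]$ and satisfies the claimed expansion.

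The only genuinely delicate points are bookkeeping ones: verifying the integrality of $KF$ (which rests on $(1-u)^2$ being monic together with the honest double root at $u=1$) and keeping the sign conventions and the $q\mapsto q^2$ substitution consistent throughout. The conceptual engine---thinness forcing the homology to be determined by $\Delta_{\mathcal{K}}$, and the Alexander symmetry upgrading a simple root to a double root---requires no new input beyond Theorem 1.1 and the standard symmetry and normalization of the Alexander polynomial.
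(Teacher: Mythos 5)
Your argument is correct and shares the paper's overall architecture: both proofs reduce the statement, via the Ozsv\'{a}th--Szab\'{o} thinness theorem and its quasi-alternating extension in \cite{MO}, to a structural fact about the Alexander polynomial, and then transport that fact through a change of variables. Where you differ is in how that structural fact is obtained. The paper invokes the skein relation to write $\Delta_{\mathcal{K}}(q^{2})=1+(q-q^{-1})^{2}f(\mathcal{K};q)$ with $f\in\mathbb{Z}[(q-q^{-1})^{2}]$ (the Conway normal form) and then performs the substitution $q\mapsto\sqrt{-1}\,q\,t^{1/2}$, which sends $(q-q^{-1})^{2}$ to $-t(q+t^{-1}q^{-1})^{2}$; integrality of $KF$ is automatic because $f$ is a polynomial in $(q-q^{-1})^{2}$. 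You instead derive the divisibility of $\Delta_{\mathcal{K}}(u)-1$ by $(1-u)^{2}$ from the normalization $\Delta_{\mathcal{K}}(1)=1$ together with the symmetry $\Delta_{\mathcal{K}}(u)=\Delta_{\mathcal{K}}(u^{-1})$, and recover integrality from the monicity of $(1-u)^{2}$. The two justifications are equivalent in content, but yours is more self-contained (it needs only the axiomatic properties of $\Delta_{\mathcal{K}}$ rather than the skein recursion), and you also make explicit the sign bookkeeping $a_{s}=(-1)^{s+\sigma/2}|a_{s}|$ and the identity $(-t)^{-\sigma/2}HFK(\mathcal{K};q^{2},t)=\Delta_{\mathcal{K}}(-tq^{2})$, steps the paper compresses into ``we could easily get.'' The paper's route buys slightly more information for free, namely that $KF$ is a polynomial in the single quantity $-t(q+t^{-1}q^{-1})^{2}$, though this extra structure is not needed for the statement.
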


\begin{proof}
By using the skein relation for classical Alexander polynomial, we have%
\begin{equation}
\Delta _{\mathcal{K}}(q^{2})=1+(q-q^{-1})^{2}f(\mathcal{K};q)
\end{equation}

for some function $f(\mathcal{K};q)\in
\mathbb{Z}
\lbrack (q-q^{-1})^{2}].$

Now combined with Theorem 4.1 and arguments for quasi-alternating knot in
\cite{MO}, we could easily get the following expansion%
\begin{equation}
(-t)^{-\frac{\sigma (\mathcal{K})}{2}}HFK(\mathcal{K}%
;q^{2},t)=1-t(q+t^{-1}q^{-1})^{2}f(\mathcal{K};\sqrt{-1}qt^{\frac{1}{2}}).
\end{equation}

with $KF(\mathcal{K};q,t)=-tf(\mathcal{K};\sqrt{-1}qt^{\frac{1}{2}})\in
\mathbb{Z}
\lbrack q^{\pm 1},t^{\pm 1}]$.\bigskip
\end{proof}

\bigskip

Now we prove some series examples of whitehead doubles, which has particular
interest for topologists.

In \cite{Hed}, M. Hedden obtain the following Heegaard-Floer homology for
the iterated Whitehead doubles of figure eight knot.

\begin{theorem}
Let $4_{1}$\ be the figure eight knot and let $D^{n}$\ denote the n-th
iterated untwisted Whitehead double of $4_{1}$ i.e. $D^{0}=4_{1}$, $%
D^{n}=D_{+}(D^{n-1},0)$, then we have

$\widehat{HFK}_{\ast }(D^{n},i)\cong \left\{
\begin{array}{c}
\overset{n}{\underset{k=0}{\bigoplus }}%
\mathbb{Z}
_{(1-k)}^{2^{n}\binom{n}{k}} \\
\mathbb{Z}
_{(0)}\overset{n}{\underset{k=0}{\bigoplus }}%
\mathbb{Z}
_{(-k)}^{2^{n+1}\binom{n}{k}} \\
\overset{n}{\underset{k=0}{\bigoplus }}%
\mathbb{Z}
_{(-1-k)}^{2^{n}\binom{n}{k}} \\
0%
\end{array}%
\right.
\begin{array}{c}
i=1 \\
\\
i=0 \\
\\
i=-1 \\
\\
otherwise%
\end{array}%
$
\end{theorem}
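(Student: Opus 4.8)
The plan is to argue by induction on $n$, combining Hedden's structural computation of the knot Floer homology of a Whitehead double with careful control of the full filtered chain homotopy type of $D^{n}$ at each stage. I first dispose of the base case $D^{0}=4_{1}$: since $4_{1}$ is alternating with $\Delta_{4_{1}}(q)=-q+3-q^{-1}$ and $\sigma(4_{1})=0$, the Ozsv\'{a}th--Szab\'{o} theorem for alternating knots (Theorem 4.1) gives $\widehat{HFK}_{*}(4_{1},1)=\mathbb{Z}_{(1)}$, $\widehat{HFK}_{*}(4_{1},0)=\mathbb{Z}^{3}_{(0)}$ and $\widehat{HFK}_{*}(4_{1},-1)=\mathbb{Z}_{(-1)}$, which is exactly the displayed formula at $n=0$ (using $\binom{0}{0}=1$ and the empty tensor power). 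In particular $\tau(4_{1})=0$, which I will propagate through the induction.

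For the inductive step I would invoke Hedden's theorem from \cite{Hed}, which expresses $\widehat{HFK}_{*}(D_{+}(K,t))$ in terms of the filtered chain homotopy type of $\widehat{CFK}(K)$, concretely through the invariant $\tau(K)$ and the homologies of the subcomplexes $\mathcal{F}(K,j)\subset\widehat{CFK}(K)$. Because the genuine input is the filtered complex and not merely the homology ranks displayed in the statement, the correct inductive hypothesis must be strengthened: I would carry along the full filtered chain homotopy type (equivalently $CFK^{\infty}(D^{n})$) rather than just the groups in the conclusion. A necessary preliminary is the behaviour of $\tau$ under untwisted doubling. Hedden's computation of $\tau$ of Whitehead doubles gives $\tau(D_{+}(K,0))=0$ whenever $\tau(K)\le 0$; starting from $\tau(4_{1})=0$ this yields $\tau(D^{n})=0$ for every $n$, placing each step in the regime $t\ge 2\tau(K)$ where Hedden's formula takes its cleanest form.

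Granting the filtered picture, the binomial coefficients and powers of two should emerge from a tensor-type doubling of Maslov-graded groups. The key bookkeeping observation is that $V:=\mathbb{Z}^{2}_{(0)}\oplus\mathbb{Z}^{2}_{(-1)}$ has rank $2^{n}\binom{n}{k}$ in Maslov grading $-k$ after taking the $n$-th tensor power. Matching this against the Alexander gradings $i=\pm1$, I would show that passing from $D^{n-1}$ to $D^{n}$ tensors the top ($i=1$) and bottom ($i=-1$) summands by this rank-four factor $V$, with the grading shifted up or down by one, while the middle ($i=0$) summand retains a single rank-one ``unknot'' class $\mathbb{Z}_{(0)}$ together with twice the tensored group, producing the factor $2^{n+1}\binom{n}{k}$. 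Equivalently, the reduced homology $\widetilde{HFK}(D^{n})$ is $\widetilde{HFK}(D^{n-1})\otimes V$; the remaining grading shifts are then pinned down by Hedden's Maslov conventions and by the symmetry $\widehat{HFK}_{*}(i)\cong\widehat{HFK}_{*-2i}(-i)$.

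I expect the main obstacle to be precisely this control of the filtered chain homotopy type through the iteration. Hedden's formula genuinely requires $CFK^{\infty}(D^{n-1})$, so the induction closes only if I can show that the iterated doubles remain rigid enough, i.e.\ that the relevant vertical and horizontal differentials in $CFK^{\infty}(D^{n})$ either vanish or are completely determined by those of $D^{n-1}$, so that the homological data recorded in the theorem actually reconstructs the filtered complex of the next double. This is where the thinness of $4_{1}$, the rigidity of untwisted doubling, and the vanishing $\tau(D^{n})=0$ at every stage must be combined to rule out unexpected differentials; once that rigidity is in place, the rank and grading computation of the previous paragraph is routine.
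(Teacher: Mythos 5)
First, a point of comparison: the paper does not prove this statement at all. It is quoted verbatim as a theorem of Hedden, with the citation \cite{Hed} standing in for the proof; the paper only uses the resulting Poincar\'e polynomial downstream to verify the expansion formula. So your proposal has to be measured against Hedden's original argument rather than anything in this text.

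Your outline does follow the strategy Hedden actually uses (induction on $n$, base case from the Ozsv\'ath--Szab\'o alternating-knot theorem, inductive step from the formula for $\widehat{HFK}$ of a Whitehead double), and your numerology checks out: the base case matches $\Delta_{4_1}(q)=-q+3-q^{-1}$ and $\sigma(4_1)=0$, and the identity $\mathrm{rk}\,\widehat{HFK}(D^{n})=4^{n+1}+1$ is consistent with tensoring the reduced homology by the rank-four piece $V=\mathbb{Z}^{2}_{(0)}\oplus\mathbb{Z}^{2}_{(-1)}$. However, the inductive step is not actually closed. Hedden's theorem computes $\widehat{HFK}_{*}(D_{+}(K,0),\pm 1)$ from the ranks of $H_{*}(\mathcal{F}(K,j))$ for the subcomplexes of the Alexander filtration on $\widehat{CFK}(K)$, together with $\tau(K)$; these ranks are strictly more information than the associated graded groups recorded in the theorem's conclusion. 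For a genus-one companion the essential unknown is the homology of the middle filtration level, equivalently the rank of the $d_{1}$ differential in the spectral sequence from $\widehat{HFK}(D^{n-1})$ to $\widehat{HF}(S^{3})\cong\mathbb{Z}$. You correctly identify this as ``the main obstacle'' and then assume it away (``once that rigidity is in place, the \dots computation is routine''). No argument is given that these differentials are determined by the data you carry in the induction --- yet this is precisely where the real work lies, and where Hedden uses the Maslov gradings, $\tau(D^{n})=0$, and the genus-one structure to pin down the filtered chain homotopy type of each double before feeding it into the next iteration. As written, the proposal is a correct plan whose decisive step is left as an unproved assumption, so it does not yet constitute a proof.
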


Thus we are able to write the corresponding Poincare polynomial of $\widehat{%
HFK}_{\ast }(D^{n},i)$ as follows%
\begin{equation}
HFK(D^{n};q^{2},t)=1+2^{n}(1+t^{-1})^{n}(tq^{2}+2+t^{-1}q^{-2}).
\end{equation}

Then we immediately obtain the following theorem, which verify the expansion
conjecture of Poincare polynomial of Heegaard-Floer homology

\begin{theorem}
The expansion formula (\ref{Heegaard-Floer Expansion}) is valid for n-th iterated untwisted Whitehead
double $D^{n}$ of $4_{1}$. In fact, the invariant $\gamma (D^{n})=0$ and
Poincare polynomial have the following expansion with coefficient $%
KF(D^{n};q,t)=2^{n}t(1+t^{-1})^{n}$.%
\begin{equation}
(-t)^{\gamma
(D^{n})}HFK(D^{n};q^{2},t)=1+2^{n}t(1+t^{-1})^{n}(q+t^{-1}q^{-1})^{2}.
\end{equation}
\end{theorem}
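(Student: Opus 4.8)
The plan is to deduce this theorem as an immediate algebraic consequence of the preceding theorem (Hedden's computation of $\widehat{HFK}_{\ast}(D^{n},i)$), which already supplies the closed form
\begin{equation*}
HFK(D^{n};q^{2},t)=1+2^{n}(1+t^{-1})^{n}(tq^{2}+2+t^{-1}q^{-2}).
\end{equation*}
The entire content of the expansion formula (\ref{Heegaard-Floer Expansion}) is that, after an appropriate normalization by $(-t)^{\gamma}$, the Poincar\'e polynomial equals $1$ plus a Laurent-polynomial multiple of $(q+t^{-1}q^{-1})^{2}$. Since Hedden's formula is already displayed in the shape $1+(\text{remainder})$, I expect no normalization to be needed, that is $\gamma(D^{n})=0$, and the whole proof reduces to recognizing the remainder as divisible by $(q+t^{-1}q^{-1})^{2}$.

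The key step is to exhibit the middle factor as a perfect square up to a unit in $t$. Expanding
\begin{equation*}
t\,(q+t^{-1}q^{-1})^{2}=t\bigl(q^{2}+2t^{-1}+t^{-2}q^{-2}\bigr)=tq^{2}+2+t^{-1}q^{-2},
\end{equation*}
I would observe that this is precisely the factor $tq^{2}+2+t^{-1}q^{-2}$ occurring in Hedden's formula. Substituting back then gives
\begin{equation*}
HFK(D^{n};q^{2},t)=1+2^{n}(1+t^{-1})^{n}\,t\,(q+t^{-1}q^{-1})^{2},
\end{equation*}
so that (\ref{Heegaard-Floer Expansion}) holds with $\gamma(D^{n})=0$ and $KF(D^{n};q,t)=2^{n}t(1+t^{-1})^{n}$, exactly as claimed.

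Finally I would check the integrality requirement $KF(D^{n};q,t)\in\mathbb{Z}[q^{\pm1},t^{\pm1}]$ built into (\ref{Heegaard-Floer Expansion}): writing $2^{n}t(1+t^{-1})^{n}=2^{n}\sum_{k=0}^{n}\binom{n}{k}t^{1-k}$ shows it is an integral Laurent polynomial in $t$ alone, hence certainly lies in $\mathbb{Z}[q^{\pm1},t^{\pm1}]$. Since every manipulation is a polynomial identity, there is no genuine obstacle; the only conceptual point worth recording is that the value $\gamma(D^{n})=0$ is \emph{forced} rather than chosen, because for any nonzero $\gamma$ the difference $(-t)^{\gamma}HFK(D^{n};q^{2},t)-1$ would contain a nonzero $q$-free term and hence fail to be divisible by $(q+t^{-1}q^{-1})^{2}$. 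As a consistency check I would also specialize to $t=-1$ to confirm recovery of the Alexander polynomial: for $n\geq1$ the factor $(1+t^{-1})^{n}$ vanishes and one obtains $\Delta_{D^{n}}(q^{2})=1$, while for $n=0$ one recovers $\Delta_{4_{1}}(q^{2})=-q^{2}+3-q^{-2}$.
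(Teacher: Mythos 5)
Your proposal is correct and follows exactly the paper's route: the paper likewise reads off $HFK(D^{n};q^{2},t)=1+2^{n}(1+t^{-1})^{n}(tq^{2}+2+t^{-1}q^{-2})$ from Hedden's theorem and then treats the identity $tq^{2}+2+t^{-1}q^{-2}=t(q+t^{-1}q^{-1})^{2}$ as immediate, which is precisely your key step. Your added remarks (integrality of $KF$, the forcing of $\gamma=0$, and the $t=-1$ specialization) are correct details that the paper leaves implicit.
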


Now we look at another example of Whitehead double.

Followed the idea from \cite{Hed}, K. Park \cite{Par} explicitly obtain the
following Heegaard-Floer homology for the untwisted whitehead double of
torus knot $T(2,2m+1)$.

\begin{theorem}
Let $D(T(2,2m+1))$\ denote the untwisted Whitehead double of torus knot $%
T(2,2m+1)$, then we have

$\widehat{HFK}_{\ast }(D(T(2,2m+1)),i)\cong \left\{
\begin{array}{c}
\mathbb{Z}
_{(0)}^{2m}\bigoplus
\mathbb{Z}
_{(-1)}^{2}\bigoplus
\mathbb{Z}
_{(-3)}^{2}\bigoplus \cdot \cdot \cdot \bigoplus
\mathbb{Z}
_{(-2m+1)}^{2} \\
\mathbb{Z}
_{(-1)}^{4m-1}\bigoplus
\mathbb{Z}
_{(-2)}^{4}\bigoplus
\mathbb{Z}
_{(-4)}^{4}\bigoplus \cdot \cdot \cdot \bigoplus
\mathbb{Z}
_{(-2m)}^{4} \\
\mathbb{Z}
_{(-2)}^{2m}\bigoplus
\mathbb{Z}
_{(-3)}^{2}\bigoplus
\mathbb{Z}
_{(-5)}^{2}\bigoplus \cdot \cdot \cdot \bigoplus
\mathbb{Z}
_{(-2m-1)}^{2} \\
0%
\end{array}%
\right.
\begin{array}{c}
i=1 \\
i=0 \\
i=-1 \\
otherwise%
\end{array}%
$
\end{theorem}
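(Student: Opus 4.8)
The plan is to derive this computation from Hedden's formula for the knot Floer homology of Whitehead doubles \cite{Hed}, the same machinery that produces Theorem 4.3. Recall that for the $t$-twisted positive Whitehead double $D_+(K,t)$, Hedden expresses each group $\widehat{HFK}_*(D_+(K,t),i)$ with $i\in\{-1,0,1\}$ entirely in terms of the integer $t$ together with the homologies $H_*(\hat F(K,j))$ of the sublevel subcomplexes $\hat F(K,j)\subset \widehat{CF}(S^3)$ defined by the Alexander filtration induced by the companion $K$. Since the untwisted double corresponds to $t=0$ and $\tau(T(2,2m+1))=m>0$, we sit squarely in the regime $t<2\tau(K)$, so the first step is to specialize Hedden's formula to $t=0$ and $K=T(2,2m+1)$.

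The second step is to supply the filtered input. Because $T(2,2m+1)$ is alternating, Theorem 4.1 (Ozsv\'ath--Szab\'o) pins down $\widehat{HFK}$, and in fact the full filtered chain homotopy type of $\widehat{CFK}(T(2,2m+1))$ is the standard length-$(2m+1)$ ``staircase'' with all steps of length one, a consequence of $T(2,2m+1)$ being an $L$-space knot with Alexander polynomial $\sum_{j=0}^{2m}(-1)^{j}q^{\,m-j}$. From this staircase I would read off, for every $j$, the rank and Maslov grading of $H_*(\hat F(K,j))$: these homologies stabilize to $\mathbb F$ for $j\ge m$ and to $0$ for $j<-m$, and jump by controlled amounts as $j$ crosses each of the $2m+1$ Alexander levels carrying a staircase generator. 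Feeding the collection $\{H_*(\hat F(K,j))\}$ into Hedden's formula should produce exactly the claimed groups: the linear-in-$m$ multiplicities ($2m$ in the top group, $4m-1$ in the middle) arise from summing the $m$ steps, while the towers of $\mathbb Z^2$ and $\mathbb Z^4$ summands at steadily decreasing Maslov gradings record the individual steps.

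The last step is to fix the absolute Maslov gradings and verify consistency. The symmetry $\widehat{HFK}_*(D,i)\cong\widehat{HFK}_{*-2i}(D,-i)$ relates the $i=1$ and $i=-1$ groups and halves the grading computation, while the triviality of the Alexander polynomial, $\Delta_{D(T(2,2m+1))}(q)=1$, forces the graded Euler characteristics to be $0,1,0$ in Alexander gradings $1,0,-1$; one checks that the listed groups indeed give $\chi=2m-2m=0$ at $i=1$ and $\chi=-(4m-1)+4m=1$ at $i=0$. I expect the main obstacle to be this grading bookkeeping rather than the ranks: the ranks fall out cleanly from counting staircase steps, but tracking Hedden's Maslov shifts through the $t=0$ specialization—so as to land the gradings running from $(0)$ down to $(-(2m+1))$ with the correct multiplicities, and to confirm that the $t<2\tau$ branch of the formula is the one in force—is delicate. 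A useful safeguard is to test the output against the $m=1$ case, the untwisted double of the trefoil computed directly in \cite{Hed}, before extending to general $m$.
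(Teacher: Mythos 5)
The paper does not actually prove this statement: it is imported verbatim from K.~Park \cite{Par}, who (as the paper notes) obtained it by ``following the idea from \cite{Hed}.'' Your proposed route --- specializing Hedden's formula for $\widehat{HFK}_*(D_+(K,t),i)$ to $t=0<2\tau(T(2,2m+1))=2m$ and feeding in the sublevel homologies $H_*(\hat F(K,j))$ read off from the length-$(2m+1)$ staircase complex of the $L$-space knot $T(2,2m+1)$ --- is precisely the route of the cited source, and your consistency checks (the symmetry $\widehat{HFK}_d(D,i)\cong\widehat{HFK}_{d-2i}(D,-i)$ matching the $i=\pm1$ rows, and the graded Euler characteristics $0,1,0$ forced by $\Delta_{D}(q)=1$) come out correctly, including the $m=1$ case against Hedden's trefoil computation. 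The only caveat is that what you have written is an outline rather than a completed argument: the actual statement of Hedden's formula, the explicit ranks and gradings of the $H_*(\hat F(K,j))$, and the Maslov shift bookkeeping are all deferred, and these are exactly the steps that constitute Park's proof.
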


Thus we are able to write the corresponding Poincare polynomial of $\widehat{%
HFK}_{\ast }(D(T(2,2m+1)),i)$ as follows%
\begin{equation}
HFK(D(T(2,2m+1));q^{2},t)=2mq^{2}+(4m-1)t^{-1}+2mt^{-2}q^{-2}+(2t^{-1}q^{2}+4t^{-2}+2t^{-3}q^{-2})%
\frac{1-t^{-2m}}{1-t^{-2}}.
\end{equation}

We have the following expansion%
\begin{equation}
(-t)^{1}HFK(D(T(2,2m+1));q^{2},t)-1=-2(q+t^{-1}q^{-1})^{2}(mt+\frac{1-t^{-2m}%
}{1-t^{-2}})
\end{equation}

Then we immediately obtain the following theorem, which verify the expansion
formula of Poincare polynomial of Heegaard-Floer homology

\begin{theorem}
The expansion formula (\ref{Heegaard-Floer Expansion}) is valid for untwisted Whitehead double $%
D(T(2,2m+1))$ of torus knot $T(2,2m+1)$. In fact, the invariant $\gamma
(D(T(2,2m+1)))=1$ and the Poincare polynomial have the expansion predicted
in the conjecture with coefficient $KF(D(T(2,2m+1));q,t)=-2mt-2\frac{%
1-t^{-2m}}{1-t^{-2}}$.
\end{theorem}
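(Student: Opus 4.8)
The plan is to verify the claimed expansion by a direct algebraic manipulation of the explicit Poincar\'e polynomial furnished by the preceding theorem of Park (the displayed formula for $HFK(D(T(2,2m+1));q^2,t)$), so that no further topological input is required. The only quantities to pin down are the shift exponent $\gamma(D(T(2,2m+1)))=1$ and the coefficient $KF$. Once $\gamma=1$ is fixed, the whole statement reduces to showing that $(-t)\,HFK(D(T(2,2m+1));q^2,t)-1$ is divisible by $(q+t^{-1}q^{-1})^2$ inside $\mathbb{Z}[q^{\pm1},t^{\pm1}]$ and to reading off the quotient.

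First I would record the expansion of the quadratic factor, $(q+t^{-1}q^{-1})^2=q^2+2t^{-1}+t^{-2}q^{-2}$, and split Park's polynomial into its ``polynomial part'' $2mq^2+(4m-1)t^{-1}+2mt^{-2}q^{-2}$ and its remaining ``geometric part'' $(2t^{-1}q^2+4t^{-2}+2t^{-3}q^{-2})\tfrac{1-t^{-2m}}{1-t^{-2}}$. Multiplying the full expression by $-t$ and subtracting $1$, I would check that the polynomial part collapses to $-2mt\,(q+t^{-1}q^{-1})^2$, where the constant $-4m$ arises as $-(4m-1)-1$ and matches the doubled cross term, while the geometric part equals $-2(q+t^{-1}q^{-1})^2\tfrac{1-t^{-2m}}{1-t^{-2}}$ because $2t^{-1}q^2+4t^{-2}+2t^{-3}q^{-2}=2t^{-1}(q+t^{-1}q^{-1})^2$. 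Combining the two contributions yields exactly the identity $(-t)\,HFK-1=-2(q+t^{-1}q^{-1})^2\bigl(mt+\tfrac{1-t^{-2m}}{1-t^{-2}}\bigr)$, which is precisely the form of formula (\ref{Heegaard-Floer Expansion}) with $\gamma=1$ and $KF=-2mt-2\tfrac{1-t^{-2m}}{1-t^{-2}}$.

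Finally, to confirm that this genuinely realizes the conjectured expansion I would verify integrality of $KF$, which is the only point that requires a moment's care. Using the geometric-series identity $\tfrac{1-t^{-2m}}{1-t^{-2}}=\sum_{j=0}^{m-1}t^{-2j}$ shows $KF=-2mt-2\sum_{j=0}^{m-1}t^{-2j}\in\mathbb{Z}[t^{\pm1}]\subset\mathbb{Z}[q^{\pm1},t^{\pm1}]$, as demanded by (\ref{Heegaard-Floer Expansion}). I do not anticipate a genuine obstacle: the homology groups are already supplied by Park's theorem, and the remaining argument is a pure factorization-and-integrality check. The only place where one could slip is the bookkeeping of the constant term after the $(-t)$-twist and subtraction of $1$, together with the recognition that the apparent rational function $\tfrac{1-t^{-2m}}{1-t^{-2}}$ in $t$ is in fact a Laurent polynomial; both are routine once the decomposition above is in place.
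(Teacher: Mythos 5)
Your proposal is correct and follows essentially the same route as the paper: the paper likewise takes Park's Poincar\'e polynomial, multiplies by $(-t)^{1}$, and exhibits the identity $(-t)\,HFK(D(T(2,2m+1));q^{2},t)-1=-2(q+t^{-1}q^{-1})^{2}\bigl(mt+\tfrac{1-t^{-2m}}{1-t^{-2}}\bigr)$ by factoring out $(q+t^{-1}q^{-1})^{2}=q^{2}+2t^{-1}+t^{-2}q^{-2}$ from both the polynomial and geometric parts. Your explicit integrality check of $KF$ via $\tfrac{1-t^{-2m}}{1-t^{-2}}=\sum_{j=0}^{m-1}t^{-2j}$ is a point the paper leaves implicit, but otherwise the arguments coincide.
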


\section{Appendix}

\subsection{Cyclotomic expansions for triply-graded reduced HOMFLY-PT
homology}

For instance, we have the following expansion for $N=1$ and $2$.%
\begin{equation}
(-t)^{\alpha (\mathcal{K})}\mathcal{P}_{1}(\mathcal{K};a,q,t)=1+H_{1}(%
\mathcal{K};a,q,t)(aq^{-1}+t^{-1}a^{-1}q^{1})(t^{2}aq+t^{-1}a^{-1}q^{-1})
\end{equation}

and

\begin{eqnarray}
(-t)^{2\alpha (\mathcal{K})}\mathcal{P}_{2}(\mathcal{K};a,q,t) &=&1+H_{1}(%
\mathcal{K}%
;a,q,t)(aq^{-1}+t^{-1}a^{-1}q^{1})(q+q^{-1})(t^{2}aq^{2}+t^{-1}a^{-1}q^{-2})
\notag \\
&&+H_{2}(\mathcal{K}%
;a,q,t)(aq^{-1}+t^{-1}a^{-1}q^{1})(t^{2}aq^{2}+t^{-1}a^{-1}q^{-2})(t^{2}aq^{3}+t^{-1}a^{-1}q^{-3})
\end{eqnarray}

\bigskip

We test the expression of knots $3_{1}-7_{7}$ obtained in \cite{DGR}, which
are quasi-alternating knots. Here we just explicitly provide their value
for $H_{1}(\mathcal{K},a,q,t)$.

\bigskip

$%
\begin{array}{llllll}
\mathcal{K} & \sigma  & g_{4} & s & \alpha  & H_{1}(\mathcal{K},a,q,t) \\
3_{1} & -2 & 1 & 2 & 1 & -a^{2}t^{2} \\
&  &  &  &  &  \\
4_{1} & 0 & 0 & 0 & 0 & 1 \\
&  &  &  &  &  \\
5_{1} & -4 & 2 & 4 & 2 & -a^{2}t^{2}+a^{4}q^{\underline{2}%
}t^{3}+a^{4}q^{2}t^{5} \\
&  &  &  &  &  \\
5_{2} & -2 & 1 & 2 & 1 & -a^{2}t^{2}-a^{4}t^{4} \\
&  &  &  &  &  \\
6_{1} & 0 & 0 & 0 & 0 & 1+a^{2}t^{2} \\
&  &  &  &  &  \\
6_{2} & -2 & 1 & 2 & 1 & -a^{2}q^{\underline{2}%
}t^{1}-a^{2}t^{2}-a^{2}q^{2}t^{3} \\
&  &  &  &  &  \\
6_{3} & 0 & 0 & 0 & 0 & q^{\underline{2}}t^{\underline{1}}+1+q^{2}t^{1} \\
&  &  &  &  &  \\
7_{1} & -6 & 3 & 6 & 3 & -a^{2}t^{2}+a^{4}q^{\underline{2}%
}t^{3}+a^{4}q^{2}t^{5}-a^{6}q^{\underline{4}}t^{4}-a^{6}t^{6}-a^{6}q^{4}t^{8}
\\
&  &  &  &  &  \\
7_{2} & -2 & 1 & 2 & 1 & -a^{2}t^{2}-a^{4}t^{4}-a^{6}t^{6} \\
&  &  &  &  &  \\
7_{3} & 4 & -2 & -4 & -2 & a^{\underline{6}}q^{\underline{2}}t^{\underline{7}%
}+a^{\underline{6}}q^{2}t^{\underline{5}}+a^{\underline{4}}q^{\underline{2}%
}t^{\underline{5}}+a^{\underline{4}}q^{2}t^{\underline{3}}-a^{\underline{2}%
}t^{\underline{2}} \\
&  &  &  &  &  \\
7_{4} & 2 & -1 & -2 & -1 & -a^{\underline{6}}t^{\underline{6}}-2a^{%
\underline{4}}t^{\underline{4}}-a^{\underline{2}}t^{\underline{2}} \\
&  &  &  &  &  \\
7_{5} & -4 & 2 & 4 & 2 & -a^{2}t^{2}+a^{4}q^{\underline{2}%
}t^{3}+a^{4}q^{2}t^{5}+a^{6}q^{\underline{2}}t^{5}+a^{6}t^{6}+a^{6}q^{2}t^{7}
\\
&  &  &  &  &  \\
7_{6} & -2 & 1 & 2 & 1 & -a^{2}q^{\underline{2}%
}t^{1}-2a^{2}t^{2}-a^{2}q^{2}t^{3}-a^{4}t^{4} \\
&  &  &  &  &  \\
7_{7} & 0 & 0 & 0 & 0 & a^{\underline{2}}t^{\underline{2}}+q^{\underline{2}%
}t^{\underline{1}}+2+q^{2}t^{1}%
\end{array}%
$

\begin{remark}
Notations $\sigma $, $g_{4}$ and $s$ stands for the signature, smooth 4-ball
genus and Rasmussen $s$ invaraint of a knot $\mathcal{K}$ respectively, and $%
a^{\underline{u}}q^{\underline{v}}t^{\underline{w}}$ denotes term $%
a^{-u}q^{-v}t^{-w}$.
\end{remark}

\bigskip

We test more homologically thick knots. Expression of $8_{19}$ and $9_{42}$
obtained in \cite{GS}(We make a variable change $a\rightarrow a^{-2}$, $%
q\rightarrow q^{-2}$, and $t\rightarrow t^{-1}$, because they use mirror
knot). Knots $10_{124}$, $10_{128}$, $10_{132}$, $10_{136}$, $10_{139}$, $%
10_{145}$, $10_{152}$, $10_{153}$, $10_{154}$ and $10_{161}$ are obtained in
pp.42-45 \cite{DGR}.

We converted dotted diagrams shown in pp.42-45 \cite{DGR} to the following
table.

$%
\begin{array}{ll}
\mathcal{K} & \mathcal{P}_{1}(\mathcal{K},a,q,t) \\
8_{19} &
\begin{array}{l}
a^{\underline{10}}t^{\underline{8}}+a^{\underline{8}}(q^{4}t^{\underline{3}%
}+t^{\underline{5}}+q^{\underline{4}}t^{\underline{7}}+q^{2}t^{\underline{5}%
}+q^{\underline{2}}t^{\underline{7}}) \\
+a^{\underline{6}}(q^{6}+q^{2}t^{\underline{2}}+q^{\underline{2}}t^{%
\underline{4}}+q^{\underline{6}}t^{\underline{6}}+t^{\underline{4}})%
\end{array}
\\
&  \\
9_{42} & a^{\underline{2}}(q^{\underline{2}}t^{\underline{4}}+q^{2}t^{%
\underline{2}})+(q^{\underline{4}}t^{\underline{3}}+2t^{\underline{1}%
}+1+q^{4}t^{1})+a^{2}(q^{\underline{2}}+q^{2}t^{2}) \\
&  \\
10_{124} &
\begin{array}{l}
a^{\underline{12}}(q^{\underline{2}}t^{\underline{10}}+q^{2}t^{\underline{8}%
})+a^{\underline{10}}(q^{\underline{6}}t^{\underline{9}}+q^{\underline{4}}t^{%
\underline{9}}+q^{\underline{2}}t^{\underline{7}}+2t^{\underline{7}}+q^{2}t^{%
\underline{5}}+q^{4}t^{\underline{5}} \\
+q^{6}t^{\underline{3}})+a^{\underline{8}}(q^{\underline{8}}t^{\underline{8}%
}+q^{\underline{4}}t^{\underline{6}}+q^{\underline{2}}t^{\underline{6}}+t^{%
\underline{4}}+q^{2}t^{\underline{4}}+q^{4}t^{\underline{2}}+q^{8})%
\end{array}
\\
&  \\
10_{128} &
\begin{array}{l}
a^{\underline{12}}t^{\underline{10}}+a^{\underline{10}}(q^{\underline{4}}t^{%
\underline{9}}+q^{\underline{2}}t^{\underline{9}}+t^{\underline{7}}+t^{%
\underline{8}}+q^{2}t^{\underline{7}}+q^{4}t^{\underline{5}})+a^{\underline{8%
}}(q^{\underline{6}}t^{\underline{8}} \\
+q^{\underline{4}}t^{\underline{7}}+2q^{\underline{2}}t^{\underline{6}}+q^{%
\underline{2}}t^{\underline{7}}+t^{\underline{5}}+t^{\underline{6}}+2q^{2}t^{%
\underline{4}}+q^{2}t^{\underline{5}}+q^{4}t^{\underline{3}}+q^{6}t^{%
\underline{2}}) \\
+a^{\underline{6}}(q^{\underline{6}}t^{\underline{6}}+q^{\underline{4}}t^{%
\underline{5}}+q^{\underline{2}}t^{\underline{4}}+t^{\underline{3}}+t^{%
\underline{4}}+q^{2}t^{\underline{2}}+q^{4}t^{\underline{1}}+q^{6})%
\end{array}
\\
&  \\
10_{132} & a^{2}(q^{\underline{2}}+q^{\underline{2}%
}t^{1}+q^{2}t^{2}+q^{2}t^{3})+a^{4}(q^{\underline{4}%
}t^{2}+t^{3}+2t^{4}+q^{4}t^{6})+a^{6}(q^{\underline{2}}t^{5}+q^{2}t^{7}) \\
&  \\
10_{136} &
\begin{array}{l}
a^{\underline{4}}t^{\underline{3}}+a^{\underline{2}}(2q^{\underline{2}}t^{%
\underline{2}}+t^{\underline{1}}+2q^{2})+(q^{\underline{4}}t^{\underline{1}%
}+q^{\underline{2}} \\
+3t^{1}+1+q^{2}t^{2}+q^{4}t^{3})+a^{2}(q^{\underline{2}%
}t^{2}+t^{3}+q^{2}t^{4})%
\end{array}
\\
&  \\
10_{139} &
\begin{array}{l}
a^{\underline{12}}(q^{\underline{2}}t^{\underline{10}}+t^{\underline{9}%
}+2q^{2}t^{\underline{8}})+a^{\underline{10}}(q^{\underline{6}}t^{\underline{%
9}}+q^{\underline{4}}t^{\underline{9}}+q^{\underline{2}}t^{\underline{8}}+q^{%
\underline{2}}t^{\underline{7}}+2t^{\underline{7}}+q^{2}t^{\underline{6}} \\
+q^{2}t^{\underline{5}}+q^{4}t^{\underline{5}}+q^{6}t^{\underline{3}})+a^{%
\underline{8}}(q^{\underline{8}}t^{\underline{8}}+q^{\underline{4}}t^{%
\underline{6}}+q^{\underline{2}}t^{\underline{6}}+t^{\underline{5}}+t^{%
\underline{4}}+q^{2}t^{\underline{4}}+q^{4}t^{\underline{2}}+q^{8})%
\end{array}
\\
&  \\
10_{145} &
\begin{array}{l}
a^{4}(q^{\underline{4}}+t^{2}+t^{3}+q^{4}t^{4})+a^{6}(q^{\underline{2}%
}t^{3}+q^{\underline{2}}t^{4}+ \\
t^{5}+q^{2}t^{5}+q^{2}t^{6})+a^{8}(q^{\underline{2}%
}t^{6}+t^{7}+q^{2}t^{8})+a^{10}t^{9}%
\end{array}
\\
&  \\
10_{152} &
\begin{array}{l}
a^{8}(q^{\underline{8}}+q^{\underline{4}}t^{2}+2q^{\underline{2}%
}t^{4}+t^{4}+t^{5}+2q^{2}t^{6}+q^{4}t^{6}+q^{8}t^{8})+a^{10}(q^{\underline{6}%
}t^{3}+2q^{\underline{4}}t^{5} \\
+q^{\underline{2}}t^{5}+q^{\underline{2}%
}t^{6}+4t^{7}+q^{2}t^{7}+q^{2}t^{8}+2q^{4}t^{9}+q^{6}t^{9})+a^{12}(2q^{%
\underline{2}}t^{8}+t^{9}+2q^{2}t^{10})%
\end{array}
\\
&  \\
10_{153} &
\begin{array}{l}
a^{\underline{2}}(q^{\underline{4}}t^{\underline{5}}+t^{\underline{3}%
}+q^{4}t^{\underline{1}})+(q^{\underline{6}}t^{\underline{4}}+2q^{\underline{%
2}}t^{\underline{2}}+q^{\underline{2}}t^{\underline{1}%
}+2+2q^{2}+q^{2}t^{1}+q^{6}t^{2}) \\
+a^{2}(q^{\underline{4}}t^{\underline{1}}+q^{\underline{4}}+q^{\underline{2}%
}t^{1}+t^{1}+2t^{2}+q^{2}t^{3}+q^{4}t^{3}+q^{4}t^{4})+a^{4}(q^{\underline{2}%
}t^{3}+t^{4}+q^{2}t^{5})%
\end{array}
\\
&  \\
10_{154} &
\begin{array}{l}
a^{\underline{12}}t^{\underline{10}}+a^{\underline{10}}(2q^{\underline{2}}t^{%
\underline{9}}+2t^{\underline{8}}+2q^{2}t^{\underline{7}})+a^{\underline{8}%
}(q^{\underline{4}}t^{\underline{7}}+q^{\underline{4}}t^{\underline{8}}+2q^{%
\underline{2}}t^{\underline{7}}+3t^{\underline{6}}+t^{\underline{5}%
}+2q^{2}t^{\underline{5}} \\
+q^{4}t^{\underline{3}}+q^{4}t^{\underline{4}})+a^{\underline{6}}(q^{%
\underline{6}}t^{\underline{6}}+q^{\underline{2}}t^{\underline{4}}+q^{%
\underline{2}}t^{\underline{5}}+2t^{\underline{4}}+q^{2}t^{\underline{2}%
}+q^{2}t^{\underline{3}}+q^{6})%
\end{array}
\\
&  \\
10_{161} &
\begin{array}{l}
a^{6}(q^{\underline{6}}+q^{\underline{2}}t^{2}+q^{\underline{2}%
}t^{3}+t^{4}+q^{2}t^{4}+q^{2}t^{5}+q^{6}t^{6})+a^{8}(q^{\underline{4}%
}t^{3}+q^{\underline{4}}t^{4} \\
+q^{\underline{2}%
}t^{5}+t^{5}+2t^{6}+q^{2}t^{7}+q^{4}t^{7}+q^{4}t^{8})+a^{10}(q^{\underline{2}%
}t^{7}+t^{8}+q^{2}t^{9})%
\end{array}%
\end{array}%
$

\bigskip

We list the following table for the coefficient $H_{1}(\mathcal{K},a,q,t)$ in
the expansion.

\bigskip $\bigskip
\begin{array}{llllll}
\mathcal{K} & \sigma  & g_{4} & s & \alpha  & H_{1}(\mathcal{K},a,q,t) \\
8_{19} & 6 & 3 & 6 & -3 & -a^{\underline{8}}t^{\underline{9}}-a^{\underline{6%
}}q^{\underline{4}}t^{\underline{8}}-a^{\underline{6}}t^{\underline{6}}+a^{%
\underline{4}}q^{\underline{2}}t^{\underline{5}}-a^{\underline{6}}q^{4}t^{%
\underline{4}}+a^{\underline{4}}q^{2}t^{\underline{3}}-a^{\underline{2}}t^{%
\underline{2}} \\
&  &  &  &  &  \\
9_{42} & 2 & 1 & 0 & 0 & q^{\underline{2}}t^{\underline{2}}+q^{2} \\
&  &  &  &  &  \\
10_{124} & 8 & 4 & 8 & -4 &
\begin{array}{l}
a^{\underline{10}}q^{\underline{2}}t^{\underline{12}}+a^{\underline{10}%
}q^{2}t^{\underline{10}}+a^{\underline{8}}q^{\underline{6}}t^{\underline{11}%
}+a^{\underline{8}}q^{\underline{2}}t^{\underline{9}}+a^{\underline{8}%
}q^{2}t^{\underline{7}}+a^{\underline{8}}q^{6}t^{\underline{5}} \\
-a^{\underline{6}}q^{\underline{4}}t^{\underline{8}}-a^{\underline{6}}t^{%
\underline{6}}-a^{\underline{6}}q^{4}t^{\underline{4}}+a^{\underline{4}}q^{%
\underline{2}}t^{\underline{5}}+a^{\underline{4}}q^{2}t^{\underline{3}}-a^{%
\underline{2}}t^{\underline{2}}%
\end{array}
\\
&  &  &  &  &  \\
10_{128} & 6 & 3 & 6 & -3 &
\begin{array}{l}
-a^{\underline{10}}t^{\underline{11}}-a^{\underline{8}}q^{\underline{4}}t^{%
\underline{10}}-a^{\underline{8}}t^{\underline{9}}-a^{\underline{8}}t^{%
\underline{8}}-a^{\underline{8}}q^{4}t^{\underline{6}} \\
-a^{\underline{6}}q^{\underline{4}}t^{\underline{8}}-a^{\underline{6}}t^{%
\underline{6}}-a^{\underline{6}}q^{4}t^{\underline{4}}+a^{\underline{4}}q^{%
\underline{2}}t^{\underline{5}}+a^{\underline{4}}q^{2}t^{\underline{3}}-a^{%
\underline{2}}t^{\underline{2}}%
\end{array}
\\
&  &  &  &  &  \\
10_{132} & 0 & 1 & -2 & 1 & -a^{2}t^{2}-a^{4}q^{\underline{2}%
}t^{4}-a^{4}q^{2}t^{6} \\
&  &  &  &  &  \\
10_{136} & 2 & 1 & 0 & 0 & a^{\underline{2}}t^{\underline{1}}+q^{\underline{2%
}}+t^{1}+q^{2}t^{2} \\
&  &  &  &  &  \\
10_{139} & 6 & 4 & 8 & -4 &
\begin{array}{l}
a^{\underline{10}}q^{\underline{2}}t^{\underline{12}}+a^{\underline{10}}t^{%
\underline{11}}+a^{\underline{10}}q^{2}t^{\underline{10}}+a^{\underline{8}%
}q^{\underline{6}}t^{\underline{11}}+a^{\underline{8}}q^{\underline{2}}t^{%
\underline{9}}+a^{\underline{8}}q^{2}t^{\underline{7}} \\
+a^{\underline{8}}q^{6}t^{\underline{5}}-a^{\underline{6}}q^{\underline{4}%
}t^{\underline{8}}-a^{\underline{6}}t^{\underline{6}}-a^{\underline{6}%
}q^{4}t^{\underline{4}}+a^{\underline{4}}q^{\underline{2}}t^{\underline{5}%
}+a^{\underline{4}}q^{2}t^{\underline{3}}-a^{\underline{2}}t^{\underline{2}}%
\end{array}
\\
&  &  &  &  &  \\
10_{145} & -2 & 2 & -4 & 2 & -a^{2}t^{2}+a^{4}q^{\underline{2}%
}t^{3}+a^{4}q^{2}t^{5}+a^{6}t^{7}+a^{8}t^{9} \\
&  &  &  &  &  \\
10_{152} & -6 & 4 & -8 & 4 &
\begin{array}{l}
-a^{2}t^{2}+a^{4}q^{\underline{2}}t^{3}+a^{4}q^{2}t^{5}-a^{6}q^{\underline{4}%
}t^{4}-a^{6}t^{6}-a^{6}q^{4}t^{8}+a^{8}q^{\underline{6}}t^{5} \\
+a^{8}q^{\underline{2}}t^{7}+a^{8}q^{2}t^{9}+a^{8}q^{6}t^{11}+2a^{10}q^{%
\underline{2}}t^{10}+a^{10}t^{11}+2a^{10}q^{2}t^{12}%
\end{array}
\\
&  &  &  &  &  \\
10_{153} & 0 & 0 & 0 & 0 & q^{\underline{4}}t^{\underline{3}}+t^{\underline{1%
}}+q^{4}t^{1}+a^{2}q^{\underline{2}}t^{1}+a^{2}t^{2}+a^{2}q^{2}t^{3} \\
&  &  &  &  &  \\
10_{154} & 4 & 3 & 6 & -3 &
\begin{array}{l}
-a^{\underline{10}}t^{\underline{11}}-a^{\underline{8}}q^{\underline{2}}t^{%
\underline{10}}-2a^{\underline{8}}t^{\underline{9}}-a^{\underline{8}}q^{2}t^{%
\underline{8}}-a^{\underline{6}}q^{\underline{4}}t^{\underline{8}} \\
-a^{\underline{6}}t^{\underline{6}}-a^{\underline{6}}q^{4}t^{\underline{4}%
}+a^{\underline{4}}q^{\underline{2}}t^{\underline{5}}+a^{\underline{4}%
}q^{2}t^{\underline{3}}-a^{\underline{2}}t^{\underline{2}}%
\end{array}
\\
&  &  &  &  &  \\
10_{161} & -4 & 3 & -6 & 3 &
\begin{array}{l}
-a^{2}t^{2}+a^{4}q^{\underline{2}}t^{3}+a^{4}q^{2}t^{5}-a^{6}q^{\underline{4}%
}t^{4} \\
-a^{6}t^{6}-a^{6}q^{4}t^{8}-a^{8}q^{\underline{2}%
}t^{8}-a^{8}t^{9}-a^{8}q^{2}t^{10}%
\end{array}%
\end{array}%
$

\begin{remark}
For these values, $\alpha (\mathcal{K})$ is coincide with the Ozsv\'{a}%
th-Szab\'{o}'s $\tau $ invariant and Rasmussen's $s$ invariant up to a
factor of $2$.
\end{remark}

\bigskip

We also tested higher representation for knots $3_{1}$, $5_{1}$ and $7_{1}$
obtained in (3.61) of \cite{FGS1}(We make a variable change $q\rightarrow
q^{2}$, and $t\rightarrow t^{2}$), knots $4_{1}$ obtained in (2.12) of \cite%
{FGS2}(original in \cite{IMMM}), $5_{2}$ and $6_{1}$ in \cite{GS} and knots $%
8_{19}$ and $9_{42}$ obtained in Appendix B of \cite{GS} (We make a variable
change $q\rightarrow q^{-2}$, and $t\rightarrow t^{-2}$, because they use
mirror knot)

$%
\begin{array}{llll}
\mathcal{K} & \sigma (\mathcal{K}) & \alpha (\mathcal{K}) & H_{2}(\mathcal{K}%
,a,q,t) \\
3_{1} & -2 & 1 & (a+t^{-1}a^{-1})a^{4}q^{2}t^{4} \\
&  &  &  \\
4_{1} & 0 & 0 & (a+t^{-1}a^{-1}) \\
&  &  &  \\
5_{1} & -4 & 2 &
\begin{array}{l}
a^{3}q^{2}t^{3}-a^{5}t^{4}-a^{5}q^{4}t^{6}-a^{5}q^{6}t^{6}-a^{7}t^{5}+a^{7}q^{10}t^{9}
\\
+a^{9}q^{\underline{2}%
}t^{6}+a^{9}q^{4}t^{8}+a^{9}q^{6}t^{8}+a^{9}q^{10}t^{10}%
\end{array}
\\
&  &  &  \\
5_{2} & -2 & 1 &
(a+t^{-1}a^{-1})(a^{4}q^{2}t^{4}+a^{6}q^{2}t^{6}+a^{6}q^{4}t^{6}+a^{8}q^{6}t^{8})
\\
&  &  &  \\
6_{1} & 0 & 0 &
(a+t^{-1}a^{-1})(1+a^{2}t^{2}+a^{2}q^{2}t^{2}+a^{4}q^{4}t^{4}) \\
&  &  &  \\
6_{2} & -2 & 1 &
\begin{array}{l}
a^{3}q^{\underline{4}%
}t^{1}+a^{3}t^{2}+a^{3}q^{2}t^{2}+2a^{3}q^{2}t^{3}+a^{3}q^{4}t^{3}+a^{3}q^{4}t^{4}+a^{3}q^{6}t^{4}+a^{3}q^{8}t^{5}
\\
+a^{5}q^{\underline{4}}t^{2}+a^{5}t^{3}+a^{5}q^{\underline{2}%
}t^{3}+2a^{5}q^{2}t^{4}+a^{5}q^{4}t^{4}+a^{5}q^{4}t^{5}+a^{5}q^{6}t^{5}+a^{5}q^{8}t^{6}%
\end{array}
\\
&  &  &  \\
6_{3} & 0 & 0 &
\begin{array}{l}
a^{\underline{1}}q^{\underline{6}}t^{\underline{3}}+a^{\underline{1}}q^{%
\underline{4}}t^{\underline{2}}+a^{\underline{1}}q^{\underline{2}}t^{%
\underline{2}}+2a^{\underline{1}}t^{\underline{1}}+a^{\underline{1}}q^{2}t^{%
\underline{1}}+a^{\underline{1}}q^{2}+a^{\underline{1}}q^{4}+a^{\underline{1}%
}q^{6}t^{1} \\
+a^{1}q^{\underline{6}}t^{\underline{2}}+a^{1}q^{\underline{4}}t^{\underline{%
1}}+a^{1}q^{\underline{2}}t^{\underline{1}}+2a^{1}+a^{1}q^{\underline{2}%
}+a^{1}q^{2}t^{1}+a^{1}q^{4}t^{1}+a^{1}q^{6}t^{2}%
\end{array}
\\
&  &  &  \\
7_{1} & -6 & 3 &
\begin{array}{l}
a^{3}q^{2}t^{3}-a^{5}t^{4}-a^{5}q^{4}t^{6}-a^{5}q^{6}t^{6}+a^{7}q^{%
\underline{2}}t^{\underline{5}%
}+a^{7}q^{2}t^{7}+a^{7}q^{4}t^{7}+a^{7}q^{6}t^{9} \\
+a^{7}q^{8}t^{9}+a^{7}q^{10}t^{9}+a^{9}q^{\underline{2}%
}t^{6}-a^{9}q^{8}t^{10}-a^{9}q^{12}t^{12}-a^{9}q^{14}t^{12}-a^{11}q^{%
\underline{4}}t^{7} \\
-a^{11}q^{2}t^{9}-a^{11}q^{4}t^{9}-a^{11}q^{8}t^{11}+a^{11}q^{18}t^{15}+a^{13}q^{%
\underline{6}}t^{8}+a^{13}t^{10}+a^{13}q^{2}t^{10} \\
+a^{13}q^{6}t^{12}+a^{13}q^{8}t^{12}+a^{13}q^{10}t^{12}+a^{13}q^{12}t^{14}+a^{13}q^{14}t^{14}+a^{13}q^{18}t^{16}%
\end{array}
\\
&  &  &  \\
8_{19} & 6 & -3 &
\begin{array}{l}
a^{\underline{17}}q^{\underline{12}}t^{\underline{19}}+a^{\underline{15}}q^{%
\underline{16}}t^{\underline{18}}+a^{\underline{15}}q^{\underline{14}}t^{%
\underline{18}}+a^{\underline{15}}q^{\underline{12}}t^{\underline{18}}+a^{%
\underline{15}}q^{\underline{10}}t^{\underline{16}}+a^{\underline{15}}q^{%
\underline{8}}t^{\underline{16}}+a^{\underline{15}}q^{\underline{4}}t^{1%
\underline{4}} \\
+a^{\underline{15}}q^{\underline{2}}t^{1\underline{4}}+a^{\underline{13}}q^{%
\underline{18}}t^{\underline{17}}+a^{\underline{13}}q^{\underline{16}}t^{%
\underline{17}}+a^{\underline{13}}q^{1\underline{4}}t^{\underline{17}}+a^{%
\underline{13}}q^{\underline{14}}t^{\underline{15}}+a^{\underline{13}}q^{%
\underline{12}}t^{\underline{15}} \\
+a^{\underline{13}}q^{\underline{10}}t^{\underline{13}}+a^{\underline{13}}q^{%
\underline{8}}t^{\underline{13}}+a^{\underline{13}}q^{\underline{6}}t^{%
\underline{13}}+a^{\underline{13}}q^{\underline{2}}t^{\underline{11}}+a^{%
\underline{13}}t^{\underline{11}}+a^{\underline{13}}q^{6}t^{\underline{9}%
}+a^{\underline{11}}q^{\underline{18}}t^{\underline{16}} \\
-a^{\underline{11}}q^{\underline{10}}t^{\underline{14}}-a^{\underline{11}}q^{%
\underline{8}}t^{1\underline{4}}-a^{\underline{11}}q^{\underline{8}}t^{%
\underline{12}}-a^{\underline{11}}q^{\underline{4}}t^{\underline{10}}-a^{%
\underline{11}}q^{\underline{2}}t^{\underline{10}}-a^{\underline{11}}q^{4}t^{%
\underline{8}}-a^{\underline{9}}q^{\underline{14}}t^{\underline{13}} \\
-a^{\underline{9}}q^{\underline{12}}t^{\underline{13}}-a^{\underline{9}}q^{%
\underline{8}}t^{\underline{11}}+a^{\underline{9}}q^{\underline{4}}t^{%
\underline{11}}+a^{\underline{9}}q^{\underline{2}}t^{\underline{11}}+a^{%
\underline{9}}q^{2}t^{\underline{7}}+a^{\underline{7}}q^{\underline{10}}t^{%
\underline{10}}+a^{\underline{7}}q^{\underline{8}}t^{\underline{10}} \\
+a^{\underline{7}}q^{\underline{6}}t^{\underline{10}}+a^{\underline{7}}q^{%
\underline{4}}t^{\underline{8}}+a^{\underline{7}}q^{\underline{2}}t^{%
\underline{8}}+a^{\underline{7}}q^{2}t^{\underline{6}}-a^{\underline{5}}q^{%
\underline{6}}t^{\underline{7}}-a^{\underline{5}}q^{\underline{4}}t^{%
\underline{7}}-a^{\underline{5}}t^{\underline{5}}+a^{\underline{3}}q^{%
\underline{2}}t^{\underline{4}}%
\end{array}
\\
&  &  &  \\
9_{42} & 2 & 0 &
\begin{array}{l}
a^{\underline{1}}q^{\underline{8}}t^{\underline{5}}+a^{\underline{1}}q^{%
\underline{6}}t^{\underline{5}}+a^{\underline{1}}q^{\underline{6}}t^{%
\underline{4}}+a^{\underline{1}}q^{\underline{4}}t^{\underline{5}}+a^{%
\underline{1}}q^{\underline{4}}t^{\underline{4}}+2a^{1}q^{\underline{4}}t^{%
\underline{3}}+a^{\underline{1}}q^{\underline{2}}t^{\underline{3}} \\
+2a^{\underline{1}}t^{\underline{3}}+a^{\underline{1}}t^{\underline{2}}+a^{%
\underline{1}}q^{2}t^{\underline{3}}+a^{\underline{1}}q^{2}t^{\underline{2}%
}+a^{\underline{1}}q^{4}t^{\underline{1}}+a^{1}q^{\underline{8}}t^{%
\underline{4}}+a^{1}q^{\underline{6}}t^{\underline{4}}+a^{1}q^{\underline{6}%
}t^{\underline{3}} \\
+a^{1}q^{\underline{4}}t^{\underline{4}}+a^{1}q^{\underline{2}}t^{\underline{%
3}}+a^{1}q^{\underline{4}}t^{\underline{2}}+2a^{1}q^{\underline{2}}t^{%
\underline{2}}+a^{1}q^{\underline{2}}t^{\underline{1}}+3a^{1}t^{\underline{2}%
}+a^{1}t^{\underline{1}} \\
+2a^{1}q^{2}t^{\underline{2}}+2a^{1}q^{2}t^{\underline{1}%
}+a^{1}q^{2}+2a^{1}q^{4}t^{\underline{1}}+2a^{1}q^{4}+a^{3}q^{\underline{4}%
}t^{\underline{2}}+a^{3}q^{\underline{4}}t^{\underline{1}} \\
+a^{3}q^{\underline{2}}t^{\underline{2}}+a^{3}q^{\underline{2}}t^{\underline{%
1}}+a^{3}t^{\underline{1}}+a^{3}q^{2}t^{\underline{1}%
}+2a^{3}q^{2}+a^{3}q^{2}t^{1}+2a^{3}q^{4}+a^{3}q^{4}t^{1}%
\end{array}%
\end{array}%
$

\begin{remark}
Careful reader may find only $H_{2}(\mathcal{K},a,q,t)$ of knot $3_{1}$, $%
4_{1}$, $5_{2}$, $6_{1}$ has an additional factor $(a+t^{-1}a^{-1})$, while
other don't have. That's the reason why we can not make the conjucture one
step further.
\end{remark}

\subsection{Volume Conjecture for SU(n) specialized superpolynomial of
colored HOMFLY-PT homology}

Here we provide several tables of $2\pi \log \left( \frac{\mathcal{P}_{N}(%
\mathcal{K};q^{n},q,q^{-(N+n-1)})|_{q=e^{\frac{\pi \sqrt{-1}}{N+b}}}}{%
\mathcal{P}_{N-1}(\mathcal{K};q^{n},q,q^{-(N+n-2)})|_{q=e^{\frac{\pi \sqrt{-1%
}}{N-1+b}}}}\right) $ for knot $\mathcal{K=}5_{2}$.

\bigskip

$%
\begin{array}{cccc}
N\backslash (n,b) & (2,1) & (2,2) & (2,3) \\
10 & 3.73795+2.62595\sqrt{-1} & 4.72339+2.30778\sqrt{-1} & 5.57612+2.02747%
\sqrt{-1} \\
20 & 3.27786+2.92530\sqrt{-1} & 3.84449+2.81820\sqrt{-1} & 4.36265+2.71525%
\sqrt{-1} \\
30 & 3.13249+2.97960\sqrt{-1} & 3.52355+2.92820\sqrt{-1} & 3.89157+2.87605%
\sqrt{-1} \\
40 & 3.05822+2.99885\sqrt{-1} & 3.35658+2.96886\sqrt{-1} & 3.64157+2.93753%
\sqrt{-1} \\
50 & 3.01308+3.00786\sqrt{-1} & 3.25424+2.98824\sqrt{-1} & 3.48668+2.96737%
\sqrt{-1} \\
70 & 2.96096+3.01577\sqrt{-1} & 3.13525+3.00551\sqrt{-1} & 3.30500+2.99436%
\sqrt{-1} \\
100 & 2.92148+3.02001\sqrt{-1} & 3.04458+3.01489\sqrt{-1} & 3.16541+3.00924%
\sqrt{-1} \\
150 & 2.89056+3.02229\sqrt{-1} & 2.97319+3.01998\sqrt{-1} & 3.05480+3.01740%
\sqrt{-1}%
\end{array}%
$

\bigskip

$%
\begin{array}{cccc}
N\backslash (n,b) & (3,1) & (3,2) & (3,3) \\
10 & 3.78463+2.47268\sqrt{-1} & 4.77077+2.02852\sqrt{-1} & 5.60791+1.65764%
\sqrt{-1} \\
20 & 3.29553+2.89137\sqrt{-1} & 3.85936+2.74299\sqrt{-1} & 4.37499+2.60602%
\sqrt{-1} \\
30 & 3.14046+2.96457\sqrt{-1} & 3.53064+2.89368\sqrt{-1} & 3.89784+2.82443%
\sqrt{-1} \\
40 & 3.06273+2.99039\sqrt{-1} & 3.36071+2.94911\sqrt{-1} & 3.64534+2.90755%
\sqrt{-1} \\
50 & 3.01598+3.00244\sqrt{-1} & 3.25694+2.97547\sqrt{-1} & 3.48919+2.94781%
\sqrt{-1} \\
70 & 2.96244+3.01301\sqrt{-1} & 3.13666+2.99891\sqrt{-1} & 3.30634+2.98415%
\sqrt{-1} \\
100 & 2.92221+3.01866\sqrt{-1} & 3.04528+3.01163\sqrt{-1} & 3.16609+3.00414%
\sqrt{-1} \\
150 & 2.89089+3.02169\sqrt{-1} & 2.97351+3.01852\sqrt{-1} & 3.05511+3.01511%
\sqrt{-1}%
\end{array}%
$

\bigskip

$%
\begin{array}{ccccc}
N\backslash (n,b) & (4,1) & (4,2) & (4,3) & (4,4) \\
10 & \# & 4.90771+1.80487\sqrt{-1} & 5.70074+1.31854\sqrt{-1} &
6.38359+0.946278\sqrt{-1} \\
20 & \# & 3.90345+2.68899\sqrt{-1} & 4.41159+2.51307\sqrt{-1} &
4.87781+2.35457\sqrt{-1} \\
30 & \# & 3.55180+2.86986\sqrt{-1} & 3.91657+2.78186\sqrt{-1} &
4.26043+2.69779\sqrt{-1} \\
40 & \# & 3.37306+2.93576\sqrt{-1} & 3.65662+2.88323\sqrt{-1} &
3.92773+2.83155\sqrt{-1} \\
50 & \# & 3.26502+2.96695\sqrt{-1} & 3.49671+2.93210\sqrt{-1} &
3.72016+2.89720\sqrt{-1} \\
70 & \# & 3.14089+2.99458\sqrt{-1} & 3.31036+2.97605\sqrt{-1} &
3.47547+2.95712\sqrt{-1} \\
100 & \# & 3.04739+3.00951\sqrt{-1} & 3.16812+3.00014\sqrt{-1} &
3.28665+2.99042\sqrt{-1} \\
150 & \# & 2.97446+3.01758\sqrt{-1} & 3.05604+3.01332\sqrt{-1} &
3.13662+3.00884\sqrt{-1}%
\end{array}%
$

\subsection{Expansion formula for Poincare polynomial of Heegaard-Floer
homology}

For a knot $\mathcal{K}$, there exists an integer valued invariant $\gamma (%
\mathcal{K})\in
\mathbb{Z}
$ of a knot $\mathcal{K}$, s.t. Poincare polynomial $HFK(\mathcal{K};q^{2},t)
$ of Heegaard-Floer knot homology of a knot $\mathcal{K}$ has the following
expansion formula%
\begin{equation}
(-t)^{\gamma (\mathcal{K})}HFK(\mathcal{K};q^{2},t)=1+KF(\mathcal{K}%
;q,t)(q+t^{-1}q^{-1})^{2}
\end{equation}

with coefficient functions $KF(\mathcal{K};q,t)\in
\mathbb{Z}
\lbrack q^{\pm 1},t^{\pm 1}]$.

\bigskip

We test the expression of homologically thick knots $8_{19}$, $9_{42}$, $%
10_{124}$, $10_{128}$, $10_{132}$, $10_{136}$, $10_{139}$, $10_{145}$, $%
10_{152}$, $10_{153}$, $10_{154}$, $10_{161}$ obtained in \cite{BG}.

(From knot $10_{124}$, we make a variable change $t\rightarrow q^{-2}$, and $%
q\rightarrow t^{-1}$. For knot $10_{124}$, we use $%
q^{8}t^{8}+q^{6}t^{7}+q^{2}t^{4}+t^{3}+q^{-2}t^{2}+q^{-6}t+q^{-8}$ instead
of $q^{-8}t^{-4}+q^{-7}t^{-3}+q^{-4}t^{-1}+q^{-3}+q^{-2}t+q^{-1}t^{3}+t^{4}$.

\bigskip

\bigskip $%
\begin{array}{llll}
\mathcal{K} & \sigma  & \gamma  & KF(\mathcal{K};q,t) \\
8_{19} & 6 & -2 & q^{\underline{4}}-q^{\underline{2}%
}t^{1}+t^{2}-q^{2}t^{3}+q^{4}t^{4} \\
&  &  &  \\
9_{42} & 2 & 0 & q^{\underline{2}}t^{1}+q^{2}t^{3} \\
&  &  &  \\
10_{124} & 8 & -3 & -q^{\underline{6}}t^{\underline{1}}+q^{\underline{4}}-q^{%
\underline{2}}t^{1}-t^{1}+t^{2}-q^{2}t^{3}+q^{4}t^{4}-q^{6}t^{5} \\
&  &  &  \\
10_{128} & 6 & -2 & 2q^{\underline{4}}-q^{\underline{2}%
}t^{1}+t^{2}-q^{2}t^{3}+2q^{4}t^{4} \\
&  &  &  \\
10_{132} & 0 & 1 & -q^{\underline{2}}t^{1}-t^{1}-q^{2}t^{3} \\
&  &  &  \\
10_{136} & 2 & 0 & q^{\underline{2}}t^{1}+2t^{2}+q^{2}t^{3} \\
&  &  &  \\
10_{139} & 6 & -3 & -q^{\underline{6}}t^{\underline{1}}+q^{\underline{4}}-q^{%
\underline{2}}t^{1}-2t^{1}+t^{2}-q^{2}t^{3}+q^{4}t^{4}-q^{6}t^{5} \\
&  &  &  \\
10_{145} & -2 & 2 & q^{\underline{2}}-t^{1}+2t^{2}+q^{2}t^{2} \\
&  &  &  \\
10_{152} & -6 & 3 & -q^{\underline{6}}t^{\underline{3}}+q^{\underline{4}}t^{%
\underline{2}}-q^{\underline{2}}t^{\underline{1}}-q^{\underline{2}%
}+1-2t^{1}-q^{2}t-q^{2}t^{2}+q^{4}t^{2}-q^{6}t^{3} \\
&  &  &  \\
10_{153} & 0 & 0 & q^{\underline{4}}t^{\underline{2}}+q^{\underline{2}%
}+q^{2}t^{2}+q^{4}t^{2} \\
&  &  &  \\
10_{154} & 4 & -2 & q^{\underline{4}}+q^{\underline{2}}-q^{\underline{2}%
}t^{1}+2t^{1}+t^{2}+q^{2}t^{2}-q^{2}t^{3}+q^{4}t^{4} \\
&  &  &  \\
10_{161} & -4 & 2 & q^{\underline{4}}t^{\underline{2}}-q^{\underline{2}}t^{%
\underline{1}}+q^{\underline{2}}+1-q^{2}t^{1}+q^{2}t^{2}+q^{4}t^{2}%
\end{array}%
$

\bigskip

We also test the expression of $41$ homologically thick knots with $11$
crossings obtained in \cite{BG}.

\bigskip

$%
\begin{array}{llll}
\mathcal{K} & \sigma  & \gamma  & KF(\mathcal{K};q,t) \\
11n_{6} & 0 & 0 & q^{\underline{4}}t^{\underline{1}}+q^{\underline{2}}t^{%
\underline{1}}+2q^{\underline{2}}+q^{2}t^{1}+2q^{2}t^{2}+q^{4}t^{3} \\
&  &  &  \\
11n_{9} & 4 & -2 & q^{\underline{6}}t^{\underline{1}}+q^{\underline{4}}+q^{%
\underline{2}}-q^{\underline{2}%
}t^{1}+2t^{1}+t^{2}+q^{2}t^{2}-q^{2}t^{3}+q^{4}t^{4}+q^{6}t^{5} \\
&  &  &  \\
11n_{12} & 0 & -1 & -q^{\underline{2}}t^{\underline{1}}-2-t^{1}-q^{2}t^{1}
\\
&  &  &  \\
11n_{19} & -4 & 1 & -q^{\underline{4}}t^{\underline{2}}-t^{1}-q^{4}t^{2} \\
&  &  &  \\
11n_{20} & -2 & 0 & 2q^{\underline{2}}t^{\underline{1}}+2+2q^{2}t^{1} \\
&  &  &  \\
11n_{24} & 2 & 0 & q^{\underline{4}}+q^{\underline{2}%
}t^{1}+2t^{2}+q^{2}t^{3}+q^{4}t^{4} \\
&  &  &  \\
11n_{27} & 6 & -2 & q^{\underline{4}}+q^{\underline{6}}t^{\underline{1}%
}+t^{2}+q^{4}t^{4}+q^{6}t^{5} \\
&  &  &  \\
11n_{31} & 2 & -2 & q^{\underline{4}}t^{\underline{1}}+q^{\underline{2}}t^{%
\underline{1}}+q^{\underline{2}}+2-t^{1}+q^{2}t^{1}+q^{2}t^{2}+q^{4}t^{3} \\
&  &  &  \\
11n_{34} & 0 & 0 & q^{\underline{4}}t^{\underline{2}}+q^{\underline{4}}t^{%
\underline{1}}+q^{\underline{2}}t^{\underline{1}}+q^{\underline{2}%
}+q^{2}t^{1}+q^{2}t^{2}+q^{4}t^{2}+q^{4}t^{3} \\
&  &  &  \\
11n_{38} & 2 & 0 & q^{\underline{2}}t^{1}+t^{1}+q^{2}t^{3} \\
&  &  &  \\
11n_{39} & 0 & 0 & 2q^{\underline{2}}+4t^{1}+2t^{2}+2q^{2}t^{2} \\
&  &  &  \\
11n_{42} & 0 & 0 & q^{\underline{2}}t^{\underline{1}}+q^{\underline{2}%
}+2+2t^{1}+q^{2}t^{1}+q^{2}t^{2} \\
&  &  &  \\
11n_{45} & 0 & 0 & q^{\underline{4}}t^{\underline{1}}+q^{\underline{4}}+2q^{%
\underline{2}}+2t^{1}+2q^{2}t^{2}+q^{4}t^{3}+q^{4}t^{4} \\
&  &  &  \\
11n_{49} & 0 & 0 & q^{\underline{2}}t^{1}+2t^{1}+q^{2}t^{3}%
\end{array}%
$

\bigskip

$%
\begin{array}{llll}
\mathcal{K} & \sigma  & \gamma  & KF(\mathcal{K};q,t) \\
11n_{57} & 6 & -2 & q^{\underline{6}}t^{\underline{1}}+q^{\underline{4}}-q^{%
\underline{2}}t^{1}+t^{1}+t^{2}-q^{2}t^{3}+q^{4}t^{4}+q^{6}t^{5} \\
&  &  &  \\
11n_{61} & 4 & -1 & -q^{\underline{6}}t^{\underline{1}}-q^{\underline{4}}-q^{%
\underline{2}}t^{1}-t^{1}+t^{2}-q^{2}t^{3}-q^{4}t^{4}-q^{6}t^{5} \\
&  &  &  \\
11n_{67} & 0 & 0 & q^{\underline{2}}+q^{\underline{2}%
}t^{1}+2t^{1}+q^{2}t^{2}+q^{2}t^{3} \\
&  &  &  \\
11n_{70} & 4 & -1 & -q^{\underline{4}}-t^{1}-2t^{2}-q^{4}t^{4} \\
&  &  &  \\
11n_{73} & 0 & 0 & q^{\underline{4}}t^{\underline{1}}+q^{\underline{4}}+q^{%
\underline{2}}+q^{2}t^{2}+q^{4}t^{3}+q^{4}t^{4} \\
&  &  &  \\
11n_{74} & 0 & 0 & q^{\underline{2}}+2t^{1}+2t^{2}+q^{2}t^{2} \\
&  &  &  \\
11n_{77} & 6 & -3 & -q^{\underline{6}}t^{\underline{1}}+q^{\underline{4}%
}-2q^{\underline{2}}-q^{\underline{2}%
}t^{1}-4t^{1}+t^{2}-2q^{2}t^{2}-q^{2}t^{3}+q^{4}t^{4}-q^{6}t^{5} \\
&  &  &  \\
11n_{79} & 2 & 0 & 2q^{\underline{2}}t^{1}+2q^{2}t^{3} \\
&  &  &  \\
11n_{80} & -2 & 1 & -q^{\underline{4}}t^{\underline{2}}-q^{\underline{2}%
}-4t^{1}-q^{2}t^{2}-q^{4}t^{2} \\
&  &  &  \\
11n_{81} & 6 & -2 & q^{\underline{6}}t^{\underline{1}}+q^{\underline{4}}+q^{%
\underline{2}}t^{1}+t^{2}+q^{2}t^{3}+q^{4}t^{4}+q^{6}t^{5} \\
&  &  &  \\
11n_{88} & 6 & -2 & q^{\underline{6}}t^{\underline{1}}+q^{\underline{4}}-q^{%
\underline{2}}t^{1}+t^{2}-q^{2}t^{3}+q^{4}t^{4}+q^{6}t^{5} \\
&  &  &  \\
11n_{92} & -2 & 0 & q^{\underline{4}}t^{\underline{2}}+q^{\underline{2}}t^{%
\underline{1}}+q^{2}t^{1}+q^{4}t^{2} \\
&  &  &  \\
11n_{96} & 2 & 0 & q^{\underline{4}}+q^{\underline{2}}+q^{\underline{2}%
}t^{1}+q^{2}t^{2}+q^{2}t^{3}+q^{4}t^{4} \\
&  &  &  \\
11n_{97} & 0 & 0 & q^{\underline{2}}t^{\underline{1}}+q^{\underline{2}%
}+2t^{1}+q^{2}t^{1}+q^{2}t^{2}%
\end{array}%
$

\bigskip

$%
\begin{array}{llll}
\mathcal{K} & \sigma  & \gamma  & KF(\mathcal{K};q,t) \\
11n_{102} & -2 & 1 & -q^{\underline{2}}-t^{1}-2t^{2}-q^{2}t^{2} \\
&  &  &  \\
11n_{104} & 6 & -2 & q^{\underline{6}}t^{\underline{1}}+q^{\underline{4}}-q^{%
\underline{2}}t^{1}+2t^{1}+t^{2}-q^{2}t^{3}+q^{4}t^{4}+q^{6}t^{5} \\
&  &  &  \\
11n_{111} & 2 & -1 & -q^{\underline{4}}-q^{\underline{2}%
}-2t^{1}-q^{2}t^{2}-q^{4}t^{4} \\
&  &  &  \\
11n_{116} & 0 & 0 & q^{\underline{2}}t^{\underline{1}}+2t^{1}+q^{2}t^{1} \\
&  &  &  \\
11n_{126} & 6 & -2 & 3q^{\underline{4}}+t^{2}+3q^{4}t^{4} \\
&  &  &  \\
11n_{133} & 4 & -1 & -q^{\underline{6}}t^{\underline{1}}-2q^{\underline{4}%
}-q^{\underline{2}}t^{1}-t^{1}+t^{2}-q^{2}t^{3}-2q^{4}t^{4}-q^{6}t^{5} \\
&  &  &  \\
11n_{135} & 4 & -2 & q^{\underline{4}}t^{\underline{1}}+q^{\underline{2}}t^{%
\underline{1}}+q^{\underline{2}}-t^{1}+q^{2}t^{1}+q^{2}t^{2}+q^{4}t^{3} \\
&  &  &  \\
11n_{138} & 2 & 0 & 2q^{\underline{2}}t^{1}+2q^{2}t^{3} \\
&  &  &  \\
11n_{143} & 0 & 0 & q^{\underline{4}}t^{\underline{1}}+q^{\underline{2}}+q^{%
\underline{2}}t^{1}+q^{2}t^{2}+q^{2}t^{3}+q^{4}t^{3} \\
&  &  &  \\
11n_{145} & 0 & 0 & q^{\underline{4}}+q^{\underline{2}%
}+2t^{1}+q^{2}t^{2}+q^{4}t^{4} \\
&  &  &  \\
11n_{151} & 2 & -1 & -2q^{\underline{2}}-4t^{1}-2t^{2}-2q^{2}t^{2} \\
&  &  &  \\
11n_{152} & 2 & -1 & -q^{\underline{4}}t^{\underline{1}}-q^{\underline{4}%
}-2q^{\underline{2}}-2t^{1}-2q^{2}t^{2}-q^{4}t^{3}-q^{4}t^{4} \\
&  &  &  \\
11n_{183} & 4 & -2 & q^{\underline{4}}+2q^{\underline{2}}-q^{\underline{2}%
}t^{1}+2t^{1}+t^{2}+2q^{2}t^{2}-q^{2}t^{3}+q^{4}t^{4}%
\end{array}%
$

\clearpage

\end{document}